 \newtheorem{theorem}{Theorem}[section]
\newtheorem{lemma}[theorem]{Lemma}
\newtheorem{corollary}[theorem]{Corollary}
\theoremstyle{definition}
\newtheorem{example}[theorem]{Example}
\theoremstyle{remark}
\newtheorem{remark}[theorem]{Remark}
\numberwithin{equation}{section}
\def\be {\begin{equation}}
\def\ee {\end{equation}}
\def\ba {\begin{eqnarray}}
\def\ea {\end{eqnarray}}
\begin{document}

\title{\Large\bf{Turing instability and Turing-Hopf bifurcation in diffusive Schnakenberg systems with gene expression time delay
%\thanks {\color{blue}Supported by the
%National Natural Science Foundation of China (No.11371112).}
}}
\author{{Weihua Jiang\thanks {Corresponding author. E-mail address: jiangwh@hit.edu.cn}}, Hongbin Wang, Xun Cao\\
\footnotesize {\em Department of Mathematics, Harbin Institute of Technology, Harbin 150001,China }}

\date{}
\maketitle

\baselineskip=0.9\normalbaselineskip \vspace{-3pt}

\begin{abstract}
For delayed reaction-diffusion Schnakenberg systems with Neumann boundary conditions, critical conditions for Turing instability are derived, which are necessary and sufficient.
And existence conditions for Turing, Hopf and Turing-Hopf bifurcations are established. Normal forms truncated to order 3 at Turing-Hopf singularity of codimension 2, are derived.
By investigating Turing-Hopf bifurcation, the parameter regions for the stability of a periodic solution, a pair of spatially inhomogeneous steady states and a pair of spatially inhomogeneous periodic solutions, are derived in $(\tau,\varepsilon)$ parameter plane ($\tau$ for time delay, $\varepsilon$ for diffusion rate). It is revealed that joint effects of diffusion and delay can lead to the occurrence of mixed spatial and temporal patterns.
Moreover, it is also demonstrated that various spatially inhomogeneous patterns with different spatial frequencies can be achieved via changing the diffusion rate.
And, the phenomenon that time delay may induce a failure of Turing instability observed by Gaffney and Monk (2006) are theoretically explained.

\noindent
{\small {\bf Keywords:}
 Diffusive Schnakenberg model with delay; Turing instability; Turing-Hopf bifurcation; Normal form; Spatiotemporal patterns}
\end{abstract}

\section{Introduction}

A morphogen is an important concept in developmental biology, because it describes a mechanism by which the emission of a signal from one part of an embryo can determine location, differentiation and fate of many surrounding cells \cite{Gurdon2001Morphogen}. Schnakenberg system \cite{Schnakenberg1979Simple} has been used to model spatial distribution of morphogen, and to understand how various morphogens interact with cells and patterns \cite{Arafa2012Approximate,Ward2002The}. Although Schnakenberg system has a simple structure, it is one of the few reaction-diffusion models in morphogenesis, which exhibit patterns consistent with those in experiments \cite{Ricard2009Turing}.
%
%how a cell can sense and respond to tiny changes in minute concentrations of extracellular
%signalling factors.

In the context of cellular pattern formation, delays play a central role in the generation of spatially coordinated oscillations of gene expression underlying the formation of vertebrate somites \cite{Lewis2003Autoinhibition}. Delays are believed to have a profound effects on the mode and tempo of cellular pattern formation \cite{Veflingstad2005Effect}.
Considering that activator autocatalysis in reaction-diffusion mechanism occurs via gene expression, Gaffney and Monk introduced gene expression time delay which is induced by transcription and translation, into Schnakenberg reaction-diffusion equations. They found that time delay educe a failure of Turing instability, which can't be predicted by a naive linear analysis of the underlying equations about homogeneous steady states (see\cite{Gaffney2006Gene}). On the basis of above work, Yi, Gaffney and Seirin-Lee considered following delayed reaction-diffusion Schnakenberg system incorporating gene expression delays, under Neumann boundary conditions
\begin{equation}\label{3.1b}
 \begin{array}{rll}
 u_t(x,t)=&\varepsilon d u_{xx}(x,t)+a-u(x,t)+u^2(x,t-\tau)v(x,t-\tau), &x\in(0,1),t>0,\\
 v_t(x,t)=&d v_{xx}(x,t)+b-u^2(x,t-\tau)v(x,t-\tau), &x\in(0,1),t>0,\\
 u_x(0,t)=&u_x(1,t)=v_x(0,t)=v_x(1,t)=0, \;\; &t\geq0,\\
 u(x,t)=&\phi(x,t)\geq0, v(x,t)=\varphi(x,t)\geq0,&
(x,t)\in[0,1]\times[-\tau,0],
  \end{array}
\end{equation}
where $u(x,t)$ and $v(x,t)$ are concentrations of activator and inhibitor
at $(x, t)$ respectively, and $a,b,d, \varepsilon$ are all positive constants, see  \cite{YGLM}. The detailed model derivation can be found elsewhere \cite{SeirinLee2010} and \cite{Gaffney2006Gene}.

Yi, Gaffney and Seirin-Lee performed detailed stability and Hopf bifurcation analyses, and derived conditions for determining the direction of bifurcation and stability of bifurcating periodic solution. Diffusion-driven instability of the unique spatially inhomogeneous steady state solution and delay-driven instability of spatially homogeneous periodic solution were also investigated, see \cite{YGLM}.

%Bifurcating modes may mutually interact in various ways, one prominent
%example is given by Turing-Hopf instabilities. Here two modes exhibiting a purely spatially or temporarily periodic pattern, respectively, occur simultaneously giving rise to mixed spatiotemporal periodic patterns, domain structures displaying bistability between spatial and temporal modes, and space-time chaos, see \cite{Hadeler2012Interaction},  \cite{Kidachi1980On},\cite{Just2001Spatiotemporal} and references therein.

Turing's theory \cite{Turing1952} shows that diffusion could destabilize an otherwise stable equilibrium of reaction-diffusion equations, and lead to nonuniform spatial patterns. This kind of instability is usually called Turing instability or diffusion-driven instability. This corresponds to the spontaneous formation of a spatially inhomogeneous state in a Turing bifurcation, see \cite{Jang2004Global, Ni2005Turing, Murray2015, Li2013Hopf}.
Due to the time-delay factor, Hopf bifurcations occur more frequently in delayed differential equations, which could destabilize a stable equilibrium and lead to temporally inhomogeneous patterns, see \cite{Hale1977,Yi2009,Su2010,YuanXP2010,ChenS2012,YiFQ2013,Guo2015,Chen2016Stability,Wang2016Spatiotemporal}.
In presence of diffusion and time delay, Turing-Hopf bifurcation arises extensively from the coincidence of Turing bifurcation and Hopf bifurcation. Thus, complex spatiotemporal behaviors involving dynamical interactions of two Fourier modes, which has both nonuniform spatially and temporally periodic patterns, can be found by investigating Turing-Hopf bifurcation, see \cite{Hadeler2012Interaction,Just2001Spatiotemporal,Kidachi1980On,Holmes1997,Ruan2015,SongY2016} and references therein.
%We might as well call these spatiotemporal patterns. Here two modes which exhibit a purely spatially or temporally periodic pattern respectively, occur simultaneously, which gives rise to mixed spatiotemporal periodic patterns and domain structures, and displays bistability between spatial and temporal modes, and space-time chaos, see \cite{Hadeler2012Interaction},  \cite{Kidachi1980On},\cite{Just2001Spatiotemporal} and references therein.
The interaction of Turing and Hopf bifurcations are regarded as an important mechanism for the appearance of complex spatiotemporal dynamics in diffusive models.

In the present paper, we concentrate on Turing instability and Turing-Hopf bifurcation of system \eqref{3.1b}. The main work is as follows:

($\bf{1}$) On the basis of results of \cite{YGLM}, we have obtained a much larger range where Turing instability does not occur, which is one sufficient and necessary condition.  In other words, we give the weaker conditions that guarantee Turing instability. Meanwhile, the maximum parameter region, where the coexistence equilibrium is stable, is provided, of which the boundary consists of Turing bifurcation curves.

($\bf{2}$) We have given an explicit expression for the first Turing bifurcation curve, on which corresponding characteristic equations without delay have no root with positive real part. It is a piecewise smooth and continuous curve, and the piecewise points are exactly Turing-Turing bifurcation points. The expression explicitly depends on wave numbers and diffusion coefficients, by which we will easily find spatial patterns with arbitrary wave number.

($\bf{3}$) The joint effects of diffusion and delay ensure that Turing-Hopf bifurcation takes place. Within the framework of Faria \cite{Faria2000,Faria2002}, Jiang et al.\cite{JAS} gave explicitly generic formulas for calculating coefficients of normal forms up to order 3 for codimension-two Hopf-steady state bifurcation of delayed reaction-diffusion equations with Neumann boundary conditions, which include Turing-Hopf bifurcation.
Based on above work, normal forms truncated to order 3 for \eqref{3.1b} are established. All coefficients of normal forms are expressed explicitly, utilizing original system parameters $a,b,d,\varepsilon$ and delay $\tau$. Based on this, it is very convenient to analyze and draw the impact of original system parameters on dynamical behaviors.
  %some complex spatiotemporal patterns have been have been confirmed and shown.

($\bf{4}$) By analyzing Turring-Hopf bifurcation, we demonstrated that delay can destabilize a stable equilibrium and drive diffusive system to generate a stable temporally periodic orbit via Hopf bifurcation,
and diffusion can also destabilize the stable equilibrium and lead to a pair of stable spatially inhomogeneous steady state solutions.
The joint effects of diffusion and delay can destabilize above stable temporally periodic orbit and a pair of  spatially inhomogeneous steady state solutions, and induce a pair of stable spatially inhomogeneous periodic orbits, which are mixed spatiotemporal periodic patterns. And various spatially inhomogeneous patterns  with different spatial frequencies , can be achieved via changing the diffusion rates.
%We also explain theoretically the phenomena observed in paper \cite{Gaffney2006Gene}, that is, time delay can induce a failure of Turing instability, which cannot be predicted by a naive linear analysis of the underlying equations about the homogeneous steady state.

The paper is organized as follows. In Section 2, by analyzing characteristic equations at the coexistence equilibrium, conditions for Turing instability, as well as existence conditions of Turing bifurcation, Hopf bifurcation and Turing-Hopf bifurcation are established.
In Section 3, applying generic formulas developed by Jiang et al.\cite{JAS}, explicit formulas for
quadratic and cubic coefficients of normal forms at Turing-Hopf singularity are derived.
In Section 4, we take a set of system parameters to illustrate spatiotemporal patterns with Turing-Hopf bifurcation, by analyzing normal form derived in Section 3. And numerical simulations demonstrate spatiotemporal phenomena consistent with theoretical analyses. We finish our study with some conclusions in Section 5.
%&&&&&&&&&&&&&&&&&&&&&&&&&&&&&&&&&&&&&&&&&&&&&&&&&&&&&&&&&&&&&&&&&&&&&&&&&&&&&&&&&&&
%                   section X2                                                        &
%&&&&&&&&&&&&&&&&&&&&&&&&&&&&&&&&&&&&&&&&&&&&&&&&&&&&&&&&&&&&&&&&&&&&&&&&&&&&&&&&&&&

%%%%%%%%%%%%%%%%%%%%%%%%%%%%%%%%%%%%%%%%%%%%%%%%%%%%%%%%%%%%%%%%%%%%%%%%%%%%%%%%%%%%%%%
%
%&
%
%&&&&&&&&&&&&&&&&&&&&&&&&&&&&&&&&&&&&&&&&&&&&&&&&&&&&&&&&&&&&&&&&&&&&&&&&&&&&&&&&&&&&&&&&&&
\section{Turing instability, Turing bifurcation and Hopf bifurcation}
In this section, we would like  to investigate Turing instability, Turing bifurcation and Hopf bifurcation for delayed reaction-diffusion Schnakenberg system \eqref{3.1b}.
%with gene expression time delay in the form of (see \cite{YGLM,SER}):
%\begin{equation}\label{3.1b}
% \begin{array}{rll}
% u_t(x,t)=&\varepsilon d u_{xx}(x,t)+a-u(x,t)+u^2(x,t-\tau)v(x,t-\tau), &x\in(0,1),t>0,\\
% v_t(x,t)=&d v_{xx}(x,t)+b-u^2(x,t-\tau)v(x,t-\tau), &x\in(0,1),t>0,\\
% u_x(0,t)=&u_x(1,t)=v_x(0,t)=v_x(1,t)=0, \;\; &t\geq0,\\
% u(x,t)=&\phi(x,t)\geq0, v(x,t)=\varphi(x,t)\geq0,&
%(x,t)\in[0,1]\times[-\tau,0],
%  \end{array}
%\end{equation}
The unique positive constant steady state solution is denoted by $E_*=(u_*,v_*)$, where
\begin{equation}\label{fixp}
u_*=a+b,\;v_*=\frac{b}{(a+b)^2}.
\end{equation}
The linearized equations of system (\ref{3.1b}) evaluated at $(u_*,v_*)$ are given by:
\begin{equation}\label{3.2bbb}
 \begin{array}{rll}
  u_t(x,t)=&\varepsilon du_{xx}(x,t)-u(x,t)+2u_*v_*u(x,t-\tau)+u_*^2v(x,t-\tau), &x\in(0,1),t>0,\\
 v_t(x,t)=&d v_{xx}(x,t)-2u_*v_*u(x,t-\tau)-u_*^2v(x,t-\tau), &x\in(0,1),t>0,\\
 u_x(0,t)=&u_x(1,t)=v_x(0,t)=v_x(1,t)=0, \;\; &t\geq0.
  \end{array}
\end{equation}

Recall that $\mu_k=k^2\pi^2$ with $k\in\mathbb{N}_0$ are eigenvalues of $-\Delta$ in one dimensional spatial domain $(0,1)$. Then, a straightforward analysis indicates that the eigenvalues of linearized operator can be derived by discussing roots of following series of equations,
\begin{equation}\label{3.3k}
D_k(\lambda,\tau,\varepsilon):=\mathrm{det}\left(\begin{array}{cc} -\varepsilon d
k^2\pi^2-1+2u_*v_*e^{-\lambda\tau}-\lambda
&  u_*^2e^{-\lambda\tau}\\
-2u_*v_*e^{-\lambda\tau} & -dk^2\pi^2- u_*^2e^{-\lambda\tau}-\lambda
\end{array}\right)=0,\;k\in\mathbb{N}_0,
\end{equation}
or equivalently,
\begin{equation}\label{eigen}
D_k(\lambda,\tau,\varepsilon):=\lambda^2+p_k\lambda+r_k+(s_k\lambda+q_k)e^{-\lambda\tau}=0,\;k\in\mathbb{N}_0,
\end{equation}
where
\begin{equation}\label{cf}
\begin{aligned}
p_k=&p_k(\varepsilon)\triangleq (\varepsilon+1)dk^2\pi^2+1,\;&r_k=&r_k(\varepsilon)\triangleq \varepsilon d^2k^4\pi^4+dk^2\pi^2,\\
s_k\triangleq & u_*^2-2u_*v_*,\;&q_k=&q_k(\varepsilon)\triangleq (\varepsilon u_*^2-2u_*v_*)dk^2\pi^2+u_*^2.
\end{aligned}
\end{equation}
Therefore,
\begin{equation}\label{eigen-0}
D_k(\lambda,0,\varepsilon)=\lambda^2+(p_k+s_k)\lambda+(r_k+q_k)=0,\;k\in\mathbb{N}_0.
\end{equation}
We define $DET_k= r_k+q_k,~~TR_k=  -(p_k+s_k)$, then
$$ \begin{array}{rl}
DET_k=&\varepsilon d^2k^4\pi^4+(\varepsilon u_*^2-2u_*v_*+1)dk^2\pi^2+u_*^2,\\
TR_k=&-(\varepsilon+1)dk^2\pi^2-1-u_*^2+2u_*v_*,
\end{array}$$
for $k\in\mathbb{N}_0$. We firstly consider steady state bifurcation.

{\noindent\bf (1) Turing instability and diffusion-induced a steady state bifurcation}

We assume that
\begin{enumerate}
\item[${\bf (N_0)}$] $ u_*^2>2u_*v_*-1>0$.
\end{enumerate}
%$${\color{blue}(\mathbf{N_0})\quad u_*^2>2u_*v_*-1>0}$$
By ${\bf (N_0)}$,  all eigenvalues of $D_0(\lambda,0)$ have negative real parts, and $TR_k<0$  for $k\in\mathbb{N}_0$. Now, we consider what conditions cause Turing instability. Assume that
\begin{enumerate}
\item[${\bf (N_1)}$] $0<\varepsilon<\varepsilon_1,~\mathrm{where} ~ \varepsilon_1\triangleq\frac{1}{u_*^2}(\sqrt{2u_*v_*}-1)^2$.

\item[${\bf (N_2)}$] $0<\varepsilon< \varepsilon_2(d),~\mathrm{where}~ \varepsilon_2(d)\triangleq\frac{2u_*v_*-1}{\pi^2d+u_*^2},~d>0$.
\end{enumerate}
%{\color{blue}
%\begin{align*}
%&(\mathbf{N_1})\quad 0<\varepsilon<\varepsilon_1,~\mathrm{where} ~ \varepsilon_1\triangleq\frac{1}{u_*^2}(\sqrt{2u_*v_*}-1)^2.\\
%&(\mathbf{N_2})\quad 0<\varepsilon< \varepsilon_2(d),~\mathrm{where}~ \varepsilon_2(d)\triangleq\frac{2u_*v_*-1}{\pi^2d+u_*^2},~d>0.
%\end{align*}}
%$0<d < \frac{2u_*v_*-\varepsilon u_*^2-1}{\varepsilon \pi^2}\triangleq d_0(\varepsilon)$, i.e. $d_0(\varepsilon)=\frac{b-a-\varepsilon(a+b)^3}{\varepsilon \pi^2(a+b)}$,
%\end{enumerate}
%Or equivalently,  we can write ${\bf (N_2)}$ as
%$$0<\varepsilon<\frac{2u_*v_*-1}{\pi^2d+u_*^2},~d>0.$$
 Let $k_{min}^2$ be the minimal point of function $DET_k$ on $k^2 \in \mathbb{R}_+$, then, in $\mathbb{R}_+$,
$$k_{min}=\sqrt{\frac{1}{2 d}\frac{2u_*v_*-\varepsilon u_*^2-1}{\varepsilon \pi^2}}.$$
Condition ${\bf (N_1)}$ guarantees that $\min\limits_{k\in \mathbb{R}_+ } DET_k < 0$ and $\varepsilon u_*^2<2u_*v_*-1$. Combining condition ${\bf (N_0)}$, we have $\varepsilon<1$. Moreover, condition  ${\bf (N_2)}$ guarantees that the minimal point
$$k_{min}>\frac{1}{\sqrt{2}}.$$
We know that $\varepsilon=\varepsilon_2(d)$ decreases monotonically  in $d$  and intersects with $\varepsilon=\varepsilon_1$ at the point $d=d_0$, where $d_0\triangleq\frac{2u_*^2}{\pi^2(\sqrt{2u_*v_*}-1)}$. We take $\varepsilon_B(d)=\min\limits_{d>0}\{\varepsilon_1,\varepsilon_2(d)\}$, then
\begin{equation}\label{eB}
\varepsilon_B(d)=\left\{\begin{array}{lll} &\varepsilon_1, &\mathrm{if} ~0<d\leq d_0,\\
&\varepsilon_2(d), &\mathrm{if} ~~~~~~~d\geq d_0.
\end{array}\right.
\end{equation}
Thus we have following conclusions.

\begin{lemma}\label{l21}
Suppose that ${\bf (N_0)}$ holds, then assumptions ${\bf (N_1)}$ and ${\bf (N_2)}$ hold  if and only if   $$0<\varepsilon< \varepsilon_B(d),~d>0.$$
\end{lemma}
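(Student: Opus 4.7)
The plan is to observe that the claimed biconditional is essentially tautological once the piecewise formula for $\varepsilon_B(d)$ is justified: the conjunction of $(N_1)$ and $(N_2)$ is equivalent to $0<\varepsilon<\min\{\varepsilon_1,\varepsilon_2(d)\}$, so the real content of the lemma is verifying that this minimum equals the displayed piecewise expression for $\varepsilon_B(d)$.

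First, under hypothesis $(N_0)$ I would check that both thresholds are strictly positive for every $d>0$. From $2u_*v_*-1>0$ we get $\sqrt{2u_*v_*}>1$, so $\varepsilon_1=(\sqrt{2u_*v_*}-1)^2/u_*^2>0$; and $\varepsilon_2(d)$ has positive numerator and positive denominator, hence is positive. This ensures the thresholds in $(N_1)$ and $(N_2)$ are non-vacuous.

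Next I would compare the two thresholds as functions of $d$. Since $\varepsilon_2(d)=(2u_*v_*-1)/(\pi^2 d+u_*^2)$ is strictly decreasing and continuous on $(0,\infty)$ with $\lim_{d\to 0^+}\varepsilon_2(d)=(2u_*v_*-1)/u_*^2$ and $\lim_{d\to\infty}\varepsilon_2(d)=0$, while $\varepsilon_1$ is a positive constant, the two curves meet in at most one point. A short calculation using the difference-of-squares factorization $2u_*v_*-1=(\sqrt{2u_*v_*}-1)(\sqrt{2u_*v_*}+1)$ shows that $\varepsilon_1<(2u_*v_*-1)/u_*^2$, so an intersection indeed occurs; the same factorization applied to the equation $\varepsilon_2(d)=\varepsilon_1$ reproduces the value $d_0=2u_*^2/[\pi^2(\sqrt{2u_*v_*}-1)]$ stated in the excerpt.

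Monotonicity then forces $\varepsilon_2(d)>\varepsilon_1$ on $(0,d_0)$ and $\varepsilon_2(d)<\varepsilon_1$ on $(d_0,\infty)$, so $\min\{\varepsilon_1,\varepsilon_2(d)\}$ coincides with the piecewise function defining $\varepsilon_B(d)$. Combined with the tautological first step, this yields the biconditional. There is no serious obstacle here; the only nontrivial calculation is the algebraic identification of $d_0$, which is handled cleanly by the difference-of-squares factorization, and the rest is the trivial observation that a conjunction of strict upper bounds is equivalent to a single strict upper bound by their minimum.
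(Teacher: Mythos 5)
Your proof is correct and follows essentially the same route as the paper, which gives no separate proof of this lemma but embeds the argument in the preceding text (the monotonicity of $\varepsilon_2(d)$, its unique intersection with the constant $\varepsilon_1$ at $d_0$, and the definition of $\varepsilon_B$ as the minimum). Your added verification that $\varepsilon_1<(2u_*v_*-1)/u_*^2$ via the factorization $2u_*v_*-1=(\sqrt{2u_*v_*}-1)(\sqrt{2u_*v_*}+1)$ correctly justifies the existence and location of the crossing point $d_0$ that the paper simply asserts.
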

Denote
\begin{equation}\label{e0}
\varepsilon_*(k,d)=\frac{(2u_*v_*-1)dk^2\pi^2-u_*^2}{dk^2\pi^2(dk^2\pi^2+u_*^2)},
~\mathrm{for}~d>d_{k},~k\in\mathbb{N},
\end{equation}
where $d_{k}\triangleq\frac{u_*^2}{2u_*v_*-1}\frac{1}{k^2\pi^2}$,
then $DET_{k}=0$ when $\varepsilon=\varepsilon_*(k,d)$.

\begin{lemma}\label{l22}
 Suppose that ${\bf (N_0)}$ holds, function $\varepsilon=\varepsilon_*(k,d)$ has following properties,
\begin{enumerate}
\item[(a)] $\varepsilon=\varepsilon_*(k,d)$ will reach the maximum $\varepsilon_1$ at $d=d_M(k)$, and $d=d_M(k)$ decreases monotonically  as $k$ increases, where $d_M(k)\triangleq \frac{u_*^2}{\sqrt{2u_*v_*}-1}\frac{1}{k^2\pi^2}$,  $k\in\mathbb{N}$.

\item[(b)] $\varepsilon=\varepsilon_*(k,d)$ decreases monotonically  in $d$ and $k$ when $d>d_{M}(k)$.

\item[(c)] For $k\in\mathbb{N}$, following equation
$$\varepsilon_*(k,d)=\varepsilon_*(k+1,d), d>0,$$
only has one root $d_{k,k+1} ~\in (d_M(k+1),d_M(k))$, where
\begin{equation}\label{dkk+}
d_{k,k+1}=\frac{u_*^2}{2\pi^2(2u_*v_*-1)}\left[{k^2}+
\frac{1}{(k+1)^2}+\sqrt{\left(\frac{1}{k^2}+
\frac{1}{(k+1)^2}\right)^2+\frac{4(2u_*v_*-1)}{k^2(k+1)^2}}
\right].
\end{equation}
And
$$\varepsilon_*(k,d)>\varepsilon_*(k+1,d)>\varepsilon_*(k+2,d)>\cdots, ~\mathrm{for}~d>d_{k,k+1}.$$

\item[(d)] Mark $d_{0,1}=+\infty$ and denote
\begin{equation}\label{ed}
\varepsilon_*\triangleq\varepsilon_*(d)=\varepsilon_*(k,d),~d\in [d_{k,k+1},d_{k-1,k}),~~k\in\mathbb{N},
\end{equation}
then $$\varepsilon_*(d)\leq\varepsilon_B(d),~0<d<+\infty,$$
and $\varepsilon_*(d)=\varepsilon_B(d)$ if and only if $d=d_M(k),~k\in\mathbb{N}.$
\end{enumerate}
\end{lemma}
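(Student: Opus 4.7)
The plan is to reduce all four parts to the study of the one-variable function $f(x) = \frac{Ax-B}{x(x+B)}$ via the substitution $x = dk^2\pi^2$, where $A := 2u_*v_*-1 > 0$ and $B := u_*^2 > 0$, so that $\varepsilon_*(k,d) = f(x)$. A direct differentiation shows $f'(x)$ has the sign of $-Ax^2 + 2Bx + B^2$, whose unique positive zero is $x_M = B(1+\sqrt{A+1})/A$; using $A+1 = 2u_*v_*$ this simplifies to $x_M = u_*^2/(\sqrt{2u_*v_*}-1)$. Pulling back, $x_M = d_M(k)k^2\pi^2$, and plugging $x = x_M$ into $f$ gives $\varepsilon_1$. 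Part~(a) follows, with $d_M(k)\propto k^{-2}$ manifestly decreasing in $k$. Part~(b) is then immediate: $f$ is strictly decreasing on $(x_M,\infty)$, the map $x = dk^2\pi^2$ is strictly increasing in both $d$ and $k$, and $d_M(k)$ itself decreases in $k$, so the regime $x > x_M$ persists both under increasing $d$ with $k$ fixed and under increasing $k$ with $d$ fixed.

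For part~(c), writing $x_j = dj^2\pi^2$, I expand $f(x_k)=f(x_{k+1})$, clear denominators, and factor to obtain
\begin{equation*}
(x_{k+1}-x_k)\bigl[A\,x_k x_{k+1} - B(x_k+x_{k+1}) - B^2\bigr]=0.
\end{equation*}
Since $x_{k+1}\neq x_k$, the bracket vanishes, producing a quadratic in $d$ with positive leading coefficient and negative constant term, hence a unique positive root; solving and simplifying the discriminant yields the claimed closed-form for $d_{k,k+1}$. To localize this root in $(d_M(k+1), d_M(k))$ I evaluate $\varepsilon_*(k,d)-\varepsilon_*(k+1,d)$ at the two endpoints: at $d = d_M(k+1)$, $\varepsilon_*(k+1,\cdot)$ attains its peak $\varepsilon_1$ while $\varepsilon_*(k,\cdot)$ is read off at $x = x_M\,k^2/(k+1)^2 < x_M$, so the difference is negative; at $d = d_M(k)$ the roles reverse. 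The intermediate-value theorem combined with uniqueness of the positive root pins down $d_{k,k+1}$. The ordering $\varepsilon_*(k,d) > \varepsilon_*(k+1,d) > \varepsilon_*(k+2,d) > \cdots$ for $d > d_{k,k+1}$ then follows by iterating part~(b): since $d_{k,k+1} > d_M(k+1) > d_M(k+2) > \cdots$, every $x_j$ with $j \geq k+1$ lies in the decreasing branch of $f$.

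Part~(d) splits into two regions. Part~(c) places $d_M(k)\in(d_{k,k+1},d_{k-1,k})$, so $\varepsilon_*(d_M(k)) = f(x_M) = \varepsilon_1$; and $d_M(k) \le d_M(1) = d_0/2 < d_0$ forces $\varepsilon_B(d_M(k)) = \varepsilon_1$ by \eqref{eB}, delivering equality at every such point. For strict inequality elsewhere, on $(0,d_0]$ we have $\varepsilon_B \equiv \varepsilon_1$ and part~(a) gives $\varepsilon_*(k,d) < \varepsilon_1$ whenever $d \neq d_M(k)$. On $[d_0,+\infty)$ where $\varepsilon_B = \varepsilon_2(d)$, clearing denominators in $\varepsilon_2(d)-\varepsilon_*(k,d)$ yields a numerator of the form $A(\pi^2 d)^2 k^2(k^2-1) + B(\pi^2 d + B)$, which is strictly positive for every $k\in\mathbb{N}$ since the first term is nonnegative and the second is strictly positive.

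The delicate step will be part~(c): verifying that the unique positive root of the quadratic lands exactly in $(d_M(k+1), d_M(k))$, because the two curves $\varepsilon_*(k,\cdot)$ and $\varepsilon_*(k+1,\cdot)$ attain their peaks at different $d$-values, so the localization really does require the two endpoint sign checks rather than a cheap monotonicity argument. The remaining steps are routine once $f$ and its critical point $x_M$ are identified, and the regions in part~(d) glue cleanly thanks to the uniform bound $d_M(k) \le d_0/2 < d_0$.
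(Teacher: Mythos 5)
Your proposal is correct, but there is nothing in the paper to compare it against: the authors state Lemma \ref{l22} without proof and merely illustrate it with Figure \ref{TuringF-1}, so your reduction to the single-variable function $f(x)=\frac{Ax-B}{x(x+B)}$ with $x=dk^2\pi^2$, $A=2u_*v_*-1$, $B=u_*^2$ supplies the missing argument. I verified the key computations: $f'$ indeed has the sign of $-Ax^2+2Bx+B^2$ with unique positive zero $x_M=u_*^2/(\sqrt{2u_*v_*}-1)$ and $f(x_M)=\varepsilon_1$; the factorization $(x_{k+1}-x_k)[Ax_kx_{k+1}-B(x_k+x_{k+1})-B^2]=0$ is exact; and the numerator of $\varepsilon_2(d)-\varepsilon_*(k,d)$ is $Ak^2(k^2-1)(\pi^2d)^2+B(\pi^2d+B)>0$ as you claim. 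Notably, your derivation of $d_{k,k+1}$ produces $\frac{1}{k^2}+\frac{1}{(k+1)^2}$ as the first terms in the bracket, whereas the paper's display \eqref{dkk+} prints $k^2+\frac{1}{(k+1)^2}$; your version is the correct one, since it is the only one consistent with the square-root term and with the paper's own numerical values $d_{1,2}=0.1765$, $d_{2,3}=0.0525$, $d_{3,4}=0.0255$ used in Section 4 --- so you have in effect caught a typo. One small expository point: in part (c), the final chain for indices $j\ge k+1$ does follow from monotonicity of $f$ on $(x_M,\infty)$, but the first link $\varepsilon_*(k,d)>\varepsilon_*(k+1,d)$ on the subinterval $(d_{k,k+1},d_M(k))$, where $x_k<x_M$ still lies on the increasing branch, needs the sign-plus-uniqueness argument you set up in the preceding sentences rather than ``iterating part (b)''; you have all the ingredients, so just say so explicitly.
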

In Figure \ref{TuringF-1}, we present a graph  of functions $\varepsilon=\varepsilon_1,~\varepsilon=\varepsilon_2(d),~\mathrm{and}~\varepsilon=\varepsilon_*(k,d),~d>0,~k=1,2,3\cdots$, which will help us understand the results of Lemma \ref{l21} and \ref{l22}.

 \begin{figure}[htbp]
	\centering
			\includegraphics[scale=0.60]{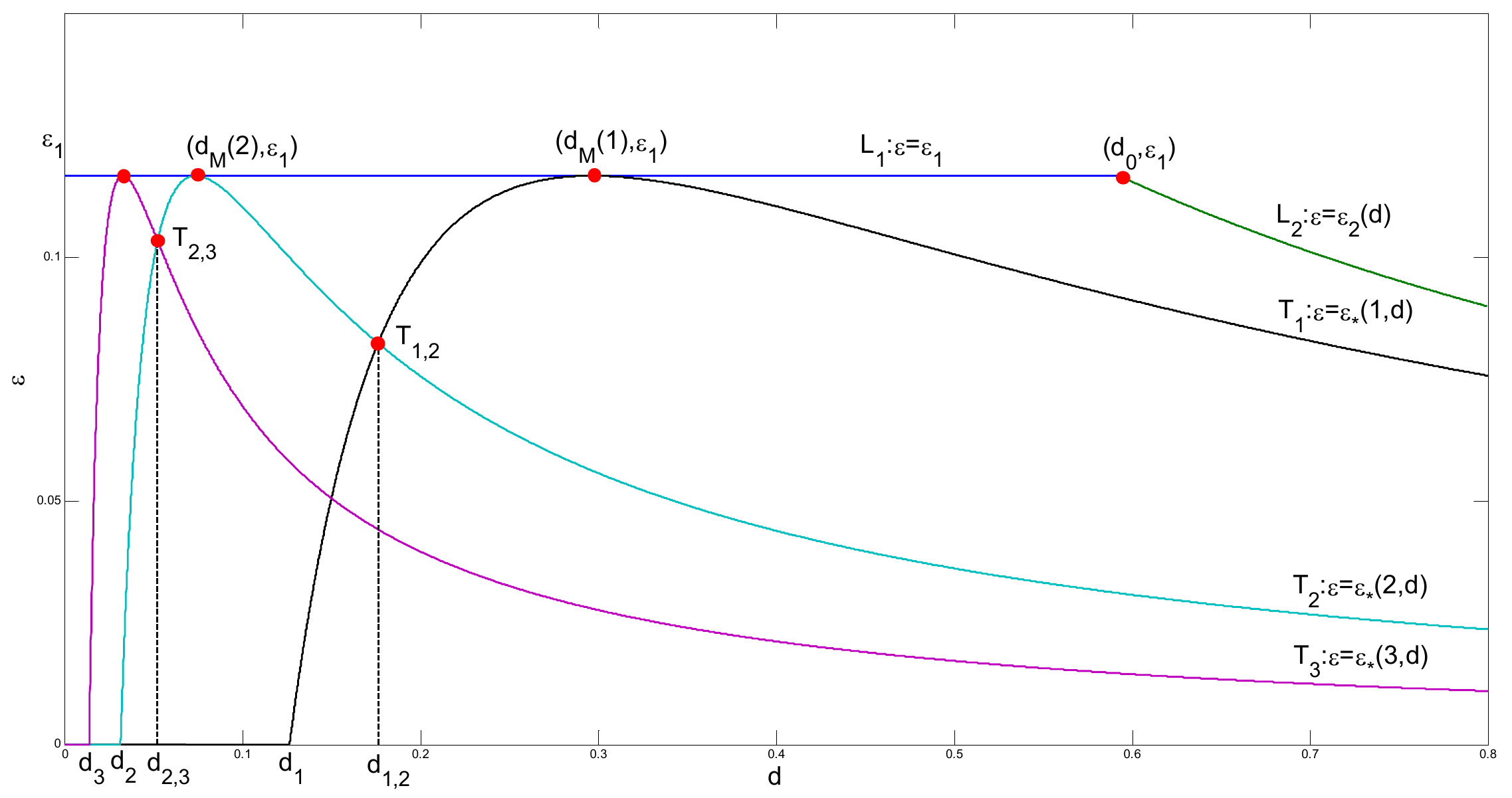}
	\caption{The graph of functions $\varepsilon=\varepsilon_1,~\varepsilon=\varepsilon_2(d),~\mathrm{and}~\varepsilon=\varepsilon_*(k,d),~d>0,~k=1,2,3\cdots$,  in $(d,\varepsilon)$ plane.}\label{TuringF-1}
\end{figure}

\begin{figure}[htbp]
	\centering
	\includegraphics[scale=0.62]{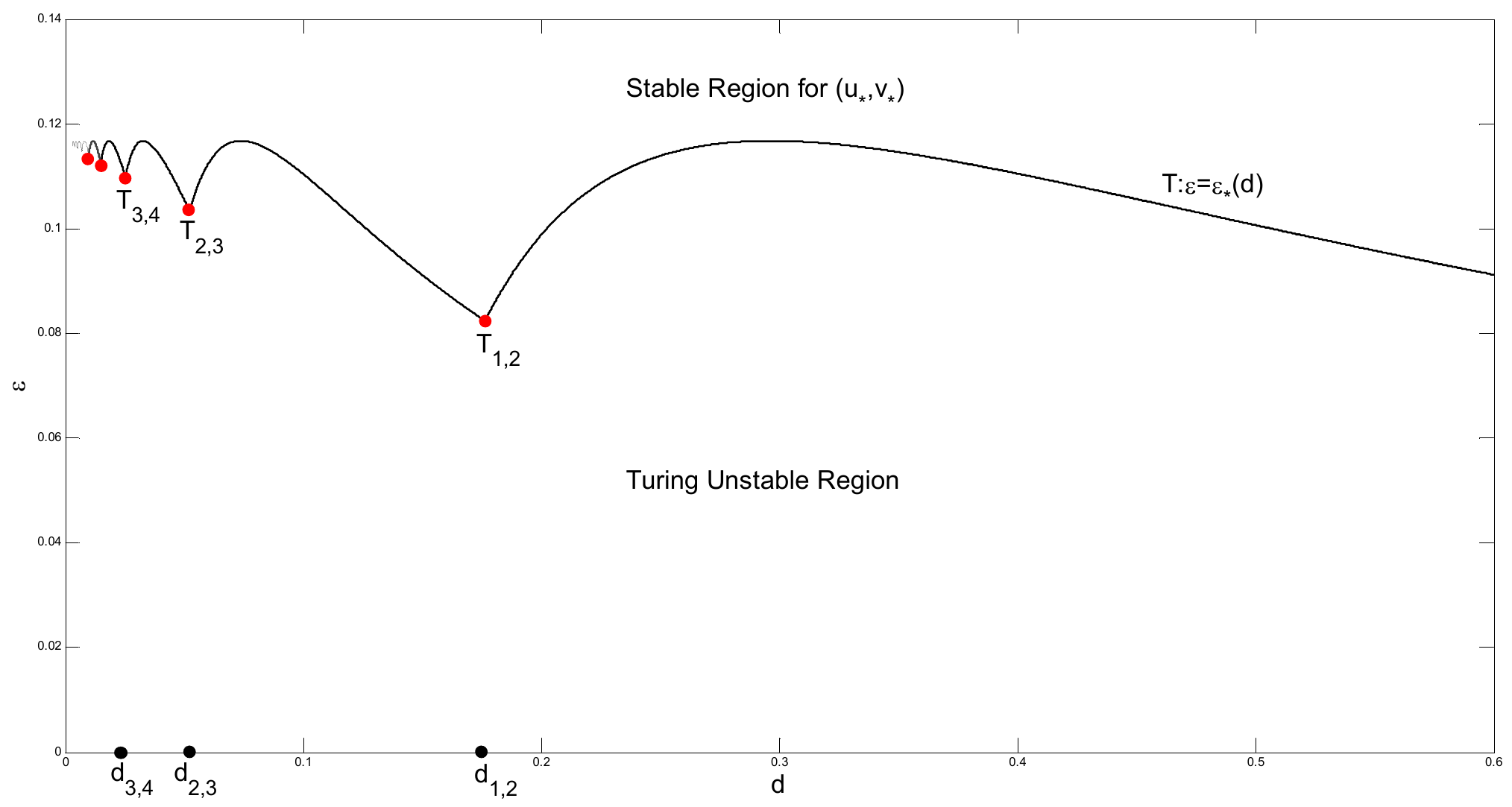}
	\caption{The first Turing bifurcation line $T: ~\varepsilon=\varepsilon_*(d),~d>0$, and Turing-Turing  bifurcation point $T_{k,k+1},k=1,2,3,\cdots$.
	}
	\label{fig32}
\end{figure}

%Further, let $d_0(\varepsilon)=\frac{1}{2[k_1^2+(k_1+1)^2]}\frac{2u_*v_*-\varepsilon u_*^2-1}{\varepsilon \pi^2} $ assume that
%\begin{enumerate}
%\item[${\bf (N_3)}$]
%$\frac{1}{2[(k_1+1)^2+k_1^2]} d_0(\varepsilon)<d<\frac{1}{2[(k_1-1)^2+k_1^2]} d_0(\varepsilon),~\mathrm{where }~d_0(\varepsilon)\triangleq\frac{2u_*v_*-\varepsilon u_*^2-1}{\varepsilon \pi^2} $,
%\end{enumerate}
%,or equivalently, we can write ${\bf (N_3)}$ as
%$$\frac{2u_*v_*-1}{u_*^2+[(k_1+1)^2+k_1^2]\pi^2d}<\varepsilon<\frac{2u_*v_*-1}{u_*^2+[(k_1-1)^2+k_1^2]\pi^2d},~d>0,$$
%thus $DET_{k_1-1}>DET_{k_1}$ and $DET_{k_1+1}>DET_{k_1}$.
\begin{theorem}\label{T23}
 Suppose that ${\bf (N_0)}$ holds.
\begin{enumerate}
\item[(1)]For any given $k_1\in\mathbb{N}$, we have that
\begin{enumerate}
\item[(a)] When $\varepsilon=\varepsilon_*(k_1,~d)\triangleq \varepsilon_*,~d\in (d_{k_1,k_1+1},d_{k_1-1,k_1}),$
characteristic equation \eqref{eigen} with $k=k_1$ for all $\tau\geq 0$, has a simple real
    eigenvalue $\lambda=\lambda(k_1,\tau,\varepsilon)$  with  $\lambda(k_1,\tau,\varepsilon^*)=0$, $\frac{\mathrm{d }D_{k_1}(\lambda,\tau,\varepsilon)}{\mathrm{d}\varepsilon}|_{\lambda=0,\varepsilon=\varepsilon^*}< 0$,
     and all other roots  of $D_{k}(\lambda,0,\varepsilon_*)$ have negative real parts for $k\in\mathbb{N}_0$.
\item[(b)] At $\varepsilon=\varepsilon_*(k_1,~d)$, system (\ref{3.1b}) %with %$\tau=0$
undergoes $k_1-$mode Turing bifurcation at $(u_*,v_*)$, and the bifurcating  steady state
solutions near $(\varepsilon_*, u_*,v_*)$  can be parameterized as $(\varepsilon(s), u(s),v(s))$, so that $\varepsilon(s)=\varepsilon_*+s$ for
$s\in(-\delta, 0) $ (or $s\in(0, \delta) $) for some small $\delta > 0$, and $(u(s),v(s))=(u_*,v_*)+r\cos (k_1 \pi x)(1,p_{k_1}) $, where $p_{k_1}=\frac{1}{u_*^2}(1-2u_*v_*-d\varepsilon_*\mu_{k_1})$ (see \eqref{pqN}), $r\neq 0$.
\end{enumerate}
\item[(2)] $\varepsilon=\varepsilon_*(d),~d >0$  is the critical curve of Turing  instability.
\begin{enumerate}
\item[(a)] When $\varepsilon>\varepsilon_*(d), ~d>0$, Turing instability will not happen in system (\ref{3.1b}) for all $\tau\geq0$. And equilibrium $(u_*,v_*)$ is asymptotically stable for system (\ref{3.1b}) with $\tau=0$.
    \item[(b)] When $0<\varepsilon<\varepsilon_*(d), ~d>0$, equilibrium $(u_*,v_*)$ is unstable, which is educed by diffusion.
    %d \in \bigcup\limits_{k\in \mathbb{N}} [d_{k,k+1},d_{k-1,k})$.
\end{enumerate}
\item[(3)]  On  the critical curve $\varepsilon=\varepsilon_*(d), ~d>0$, $k-$mode Turing bifurcation occurs when $d\in (d_{k,k+1},d_{k-1,k})$,
  and $(k,k+1)-$mode Turing-Turing bifurcation occurs when $d=d_{k,k+1}$, $k\in \mathbb{N}$, which stands for the intersection of two Turing bifurcation curves with different wave numbers $k$ and $k+1$.
\end{enumerate}
\end{theorem}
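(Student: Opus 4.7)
The strategy is to decouple by Fourier modes: because the Neumann Laplacian on $(0,1)$ has eigenvalues $\mu_k=k^2\pi^2$ with eigenfunctions $\cos(k\pi x)$, the spectrum of the linearisation \eqref{3.2bbb} is the union over $k\in\mathbb{N}_0$ of the roots of \eqref{eigen}. The organising observation for the whole theorem is that $D_k(0,\tau,\varepsilon)=r_k+q_k=DET_k(\varepsilon)$ is independent of $\tau$, so whether $\lambda=0$ is a characteristic root is dictated entirely by the curves $\varepsilon=\varepsilon_*(k,d)$ from Lemma \ref{l22}.

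For part (1)(a), fix $k_1$ and set $\varepsilon^*=\varepsilon_*(k_1,d)$ with $d\in(d_{k_1,k_1+1},d_{k_1-1,k_1})$. Since $DET_{k_1}(\varepsilon^*)=0$, the value $\lambda=0$ is a root of $D_{k_1}(\lambda,\tau,\varepsilon^*)=0$ for every $\tau\geq 0$. I would check simplicity by differentiating \eqref{eigen} and using $q_{k_1}(\varepsilon^*)=-r_{k_1}$, obtaining $\partial_\lambda D_{k_1}(0,\tau,\varepsilon^*)=p_{k_1}+s_{k_1}+\tau r_{k_1}$; this is strictly positive because $p_{k_1}+s_{k_1}=-TR_{k_1}>0$ under ${\bf (N_0)}$ and $r_{k_1}>0$. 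Differentiating in $\varepsilon$ gives $\partial_\varepsilon D_{k_1}(0,\tau,\varepsilon^*)=\partial_\varepsilon DET_{k_1}(\varepsilon^*)=dk_1^2\pi^2(dk_1^2\pi^2+u_*^2)\neq 0$, and the implicit function theorem then produces a smooth real branch $\lambda(k_1,\tau,\varepsilon)$ with $\lambda(k_1,\tau,\varepsilon^*)=0$ and yields the claimed transversality sign (equivalently, $d\lambda/d\varepsilon<0$). For every other mode I would invoke Lemma \ref{l22}(c)(d): on $d\in(d_{k_1,k_1+1},d_{k_1-1,k_1})$ one has $\varepsilon_*(k_1,d)>\varepsilon_*(k,d)$ for each $k\in\mathbb{N}\setminus\{k_1\}$, so by the strict monotonicity of $DET_k$ in $\varepsilon$ for $k\geq 1$, $DET_k(\varepsilon^*)>0$; also $DET_0=u_*^2>0$ automatically. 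Combined with $TR_k<0$ from ${\bf (N_0)}$, both roots of $D_k(\lambda,0,\varepsilon^*)$ lie in the open left half-plane for $k\neq k_1$.

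Part (1)(b) then follows from a standard local bifurcation argument (Crandall--Rabinowitz applied to the stationary equation associated with \eqref{3.1b}): the kernel of the linearisation at $(\varepsilon^*,u_*,v_*)$ is one-dimensional, spanned by $\cos(k_1\pi x)(1,p_{k_1})^{T}$, with $p_{k_1}$ read off from either row of the matrix in \eqref{3.3k}, and transversality was shown above. Parts (2) and (3) are then consequences of the same comparison. When $\varepsilon>\varepsilon_*(d)=\max_{k\geq 1}\varepsilon_*(k,d)$, monotonicity of $DET_k$ in $\varepsilon$ gives $DET_k(\varepsilon)>0$ for all $k\in\mathbb{N}_0$, so $D_k(0,\tau,\varepsilon)=0$ is impossible for any $k$ or $\tau$, ruling out any real Turing crossing; at $\tau=0$ this together with $TR_k<0$ yields asymptotic stability of $E_*$. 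Conversely, $0<\varepsilon<\varepsilon_*(d)$ forces $DET_k(\varepsilon)<0$ for some $k\geq 1$, so $\lambda^2-TR_k\lambda+DET_k$ has a positive real root. Finally, at $d=d_{k,k+1}$ the two values $\varepsilon_*(k,d)$ and $\varepsilon_*(k+1,d)$ coincide with $\varepsilon_*(d)$, so applying the (1)(a) argument to each of the two modes simultaneously produces two simple zero eigenvalues, which is exactly a $(k,k+1)$-mode Turing--Turing bifurcation.

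The main point I expect to be delicate is the phrase ``Turing instability will not happen for all $\tau\geq 0$'' in (2)(a): the argument above excludes any real zero crossing, which is the delay-independent part of the spectrum. If the statement is intended in the stronger sense of ruling out any imaginary-axis crossing, one must substitute $\lambda=i\omega$ into \eqref{eigen}, square modulus to eliminate $\tau$, and verify that $\omega^4+(p_k^2-2r_k-s_k^2)\omega^2+(r_k^2-q_k^2)=0$ has no positive root when $\varepsilon>\varepsilon_*(d)$; this reduces to an explicit sign check using $r_k+q_k=DET_k>0$ together with the formulas \eqref{cf}. Everything else is a mechanical use of the implicit function theorem on top of the comparison relations for $\varepsilon_*(k,d)$ already supplied by Lemma \ref{l22}.
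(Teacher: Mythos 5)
Your proposal is correct and follows essentially the same route as the paper: mode-by-mode reduction to the sign of $DET_k$, simplicity of the zero root via $\partial_\lambda D_{k_1}(0,\tau,\varepsilon_*)=p_{k_1}+s_{k_1}+\tau r_{k_1}>0$ (using $q_{k_1}=-r_{k_1}$), transversality by implicit differentiation in $\varepsilon$, and Lemma \ref{l22} supplying the ordering of the curves $\varepsilon_*(k,d)$ for all other modes. The ``delicate point'' you flag is moot: the claim for all $\tau\geq 0$ in (2)(a) concerns only the delay-independent zero-eigenvalue (Turing) mechanism, while asymptotic stability is asserted only at $\tau=0$, exactly as your argument delivers.
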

\begin{proof}
Firstly, ${\bf (N_0)}$ implies that
$$TR_k<TR_0=-1-u_*^2+2u_*v_*<0,~k\in\mathbb{N}.$$
When $d\in (d_{k_1,k_1+1},d_{k_1-1,k_1})$, $\lambda=0$ is a root of $D_{k_1}(\lambda,\tau,\varepsilon_*)$ for all $\tau\geq 0$, and all other roots  of $D_{k_1}(\lambda,\tau,\varepsilon)$ have negative real parts when $\varepsilon=\varepsilon_*$. And
$$DET_{k_1}=0, ~DET_k>0,~k\in\mathbb{N},~k\neq k_1.$$
From a direct calculation, we obtain that
$$
\frac{\mathrm{d }D_{k_1}(\lambda,\tau,\varepsilon)}{\mathrm{d}\lambda}|_{\lambda=0}=p_{k_1}+s_{k_1}-\tau q_{k_1}=-TR_{k_1}+\tau r_{k_1}>0 \  \mathrm{for} \ \tau\geq0,
$$
thus $\lambda=0$ is a simple root.

Next, we show that tranversality condition is valid. Let $\lambda=\lambda(k,\tau,\varepsilon)$ be root of (\ref{eigen}) satisfying $\lambda(k_1,\tau,\varepsilon_*)=0$, then
$$
\frac{\mathrm{d }D_{k_1}(\lambda,\tau,\varepsilon)}{\mathrm{d}\varepsilon}|_{\lambda=0,\varepsilon=\varepsilon_*}=(p_{k_1}+s_{k_1}-\tau q_{k_1})\frac{\mathrm{d}\lambda(k_1,\tau,\varepsilon_*)}{\mathrm{d}\varepsilon}+d^2k_1^4\pi^4+dk_1^2\pi^2 u_*^2=0.
$$
By $\bf (N_0)$ and $DET_{k_1}=0$, we have $p_{k_1}+s_{k_1}-\tau q_{k_1}=u_*^2-2u_*v_*+1+(\varepsilon_*+1)dk_1^2\pi^2+\tau r_{k_1}>0$ when ${\varepsilon=\varepsilon_*}$. Thus
$$ \frac{\mathrm{d}\lambda(k_1,\tau,\varepsilon_*)}{\mathrm{d}\varepsilon}=-\frac{d^2k_1^4\pi^4+dk_1^2\pi^2u_*^2}{p_{k_1}+s_{k_1}-\tau q_{k_1}}<0,\  \mathrm{for} \ \tau\geq0.$$
Combining Lemma \ref{l22}, the theorem is proved. $~~~\Box$
\end{proof}

\begin{corollary}\label{cor:2.4}
 Suppose that ${\bf (N_0)}$ holds. Turing instability (or say Turing bifurcation) does not occur in system (\ref{3.1b}) for all $\tau\geq0$, when one of the following three conditions is established,
\begin{enumerate}
\item[(1)] $\varepsilon>\varepsilon_1$. The critical value $\varepsilon_1$ is a constant, which does not depend on diffusion.
\item[(2)]  $\varepsilon>\varepsilon_B(d),~d>0$. The critical value $\varepsilon_B(d)$ depends on diffusion, but not on mode number $k$.
\item[(3)] $\varepsilon>\varepsilon_*(d),~d>0$. The critical value $\varepsilon_*(d)$ depends on both diffusion and mode number $k$.
\end{enumerate}
Then, equilibrium $(u_*,v_*)$ is asymptotically stable for system (\ref{3.1b}) with $\tau=0$.
\end{corollary}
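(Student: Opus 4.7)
My plan is to exploit the hierarchy $\varepsilon_*(d) \le \varepsilon_B(d) \le \varepsilon_1$ already established in the excerpt; this turns the three listed conditions into a nested chain of hypotheses and reduces the corollary to a single application of Theorem 2.3(2)(a).

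First I would observe that the very definition $\varepsilon_B(d) = \min\{\varepsilon_1, \varepsilon_2(d)\}$ immediately yields $\varepsilon_B(d) \le \varepsilon_1$ for every $d > 0$, while Lemma 2.2(d) gives $\varepsilon_*(d) \le \varepsilon_B(d)$ for every $d > 0$. Chaining these, any $\varepsilon$ satisfying condition (1) also satisfies condition (2), and any $\varepsilon$ satisfying condition (2) also satisfies condition (3). It therefore suffices to prove the conclusion under the weakest hypothesis, namely $\varepsilon > \varepsilon_*(d)$ for $d > 0$.

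But this is precisely the content of Theorem 2.3(2)(a): under ${\bf (N_0)}$ and $\varepsilon > \varepsilon_*(d)$, Turing instability does not occur in system \eqref{3.1b} for any $\tau \ge 0$, and the equilibrium $(u_*, v_*)$ is asymptotically stable when $\tau = 0$. So the proof reduces to citing Lemma 2.2(d) for the chain of comparisons and Theorem 2.3(2)(a) for the conclusion, with no further computation required.

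I do not anticipate any genuine obstacle; the only point worth verifying is that the implications $(1)\Rightarrow(2)\Rightarrow(3)$ remain valid on the exceptional set $\{d_M(k) : k \in \mathbb{N}\}$ where Lemma 2.2(d) gives equality $\varepsilon_*(d) = \varepsilon_B(d)$. This is automatic because all three hypotheses are posed as strict inequalities, so equality of the thresholds causes no loss. Hence the corollary follows at once.
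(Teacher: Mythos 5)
Your proposal is correct and follows essentially the same route the paper intends: the corollary is a direct consequence of Theorem \ref{T23}(2)(a) once the chain $\varepsilon_*(d)\le\varepsilon_B(d)\le\varepsilon_1$ (from Lemma \ref{l22}(d) and the definition of $\varepsilon_B$) shows that conditions (1) and (2) each imply condition (3). Your observation that the strict inequalities make the equality points $d=d_M(k)$ harmless is a nice touch, though not strictly needed.
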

\begin{remark}
In Corollary \ref{cor:2.4}, terms (1) and (2) both are sufficient conditions, under which Turing instability does not occur, and term (3) is not only sufficient but also necessary. In other words, both $\varepsilon>\varepsilon_1$ and $\varepsilon<\varepsilon_B(d),~d>0$ are necessary for Turing instability, while $\varepsilon<\varepsilon_*(d),~d>0$ is a sufficient and necessary condition. And condition $\varepsilon>\varepsilon_1$, which does not depend on diffusion, is also precisely the (19) in  \cite [Proposition 2]{Hadeler2012Interaction}. The latter two conditions, that is $\varepsilon<\varepsilon_B(d),~d>0$ and $\varepsilon<\varepsilon_*(d),~d>0$, are both  weaker conditions.
\end{remark}
\begin{remark}
From ${\bf (N_1)}$ and ${\bf (N_2)}$, we derive $\varepsilon<1$. Therefore,
Turing instability ( or say Turing bifurcation) does not occur in system (\ref{3.1b}) for $\varepsilon\geq1$.
Notice that $\{(d,\varepsilon):d>\frac{1}{(1-\varepsilon)\pi^2},0<\varepsilon<1\}\subseteq \{(d,\varepsilon):\varepsilon>\varepsilon_B(d),~d>0\}$, which means that we have given a much larger range, where Turing instability does not occur, than \cite{YGLM}. (See Theorem 3.1 (1) of \cite{YGLM} ).
\end{remark}
\begin{remark}
We call the critical curve of Turing  instability $\varepsilon=\varepsilon_*(d),~d >0$
the first Turing bifurcation curve, on which the corresponding characteristic equation without delay has no root with positive real part. It is a piecewise smooth and continuous curve, and piecewise point is exactly Turing-Turing bifurcation point $T_{k.k+1},k\in\mathbb{N}$, see in Figure \ref{fig32}. By \eqref{dkk+} and \eqref{ed}, expression of the first Turing curve explicitly depends on wave number $k$ and diffusion coefficient $d$, so that we can easily find stable spatial pattern with arbitrary wave number.
\end{remark}
\begin{remark} By Theorem 2.1 and Figure \ref{fig32}, we assert that when diffusion ratio $\varepsilon$ is relatively constant, the diffusion coefficient $d$ of activator has great influence on wave number of spatial pattern. Smaller the diffusion coefficient $d$ is, larger the wave number $k$ is.
\end{remark}
%%%%
%%%%
%%%%
\noindent{\bf(2) Delay-induced a Hopf bifurcation}

Suppose that ${\bf (N_0)}$ holds,  and $\varepsilon\geq\varepsilon_*(d),~d>0$. Let $\lambda=\mathrm{i}\omega_k$ with $\omega_k > 0$, be the potential pure
imaginary eigenvalues of characteristic equation (\ref {eigen}) when $\tau=\tau_k$, then
$$
D_k(\mathrm{i}\omega_k, \tau_k,\varepsilon)=r_k-\omega_k^2+q_k \cos(\omega_k \tau_k)+s_k\omega_k \sin(\omega_k \tau_k)+\mathrm{i}[p_k\omega_k+s_k\omega_k \cos(\omega_k\tau_k)-q_k \sin(\omega_k \tau_k)]=0,
$$
$ k\in \mathbb{N}_0$. Thus,
\begin{equation}\label{imagi}
 \begin{array}{rl}
\cos(\omega_k \tau_k)&=\frac{q_k(\omega_k^2-r_k)-p_ks_k\omega_k^2}{s_k^2\omega_k^2+q_k^2} ,\\
\sin(\omega_k \tau_k)&=\frac{s_k\omega_k(\omega_k^2-r_k)+p_kq_k\omega_k}{s_k^2\omega_k^2+q_k^2} .
 \end{array}
\end{equation}
Then, a direct analysis shows
that $\omega_k$ satisfies
\begin{equation}\label{omegak}
\omega_k^4+(p_k^2-s_k^2-2r_k)w_k^2+r_k^2-q_k^2=0.
\end{equation}
 We define
\begin{equation}\label{omegakk}
\omega_k^{\pm}:=\frac{\sqrt{2}}{2}\bigg(s_k^2-p_k^2+2r_k\pm\sqrt{(s_k^2-p_k^2+2r_k)^2
-4(r_k^2-q_k^2)}\bigg)^{1/2}.
\end{equation}

\noindent 1. We firstly consider the sign of $r_k^2-q_k^2=(r_k+q_k)(r_k-q_k)$.

We know that $r_k+q_k=DET_k\geq 0$, and $r_k-q_k=\varepsilon(dk^2\pi^2)^2-(\varepsilon u_*^2-2u_*v_*-1)(dk^2\pi^2)-u_*^2$. Noticing that  ${\bf (N_1)}$ means $\varepsilon u_*^2-2u_*v_*-1<0$, we have
$$
r_k-q_k>r_0-q_0=-u_*^2,\ \mathrm{for}\ \mathrm{any}\  k\in \mathbb{N}.
$$
Defining
\begin{equation}\label{k0}
K  ^0:=K  ^0(\varepsilon)=\frac{1}{\sqrt{2\varepsilon d}\pi}\left[(\varepsilon u_*^2-2u_*v_*-1)+\sqrt{(\varepsilon u_*^2-2u_*v_*-1)^2+4\varepsilon u_*^2}\right]^{\frac{1}{2}},
\end{equation}
we have
$$
r_{K^0}^2-q_{K^0}^2=0,
$$
and
$$
\begin{array}{rlll}
r_k^2-q_k^2<0, & \mathrm{for}& 0\leq &k<K^0,\\
r_k^2-q_k^2>0, & \mathrm{for}& &k>K^0.
\end{array}
$$
Especially, $k_1>K^0(\varepsilon_*)$.

\noindent 2. Then, we consider  $p_k^2-s_k^2-2r_k$, which is regarded as a function of $k$.

By $p_k^2-s_k^2-2r_k=({\varepsilon}^2+1)d^2k^4\pi^4+2\varepsilon dk^2\pi^2-(u_*^2-2u_*v_*)^2+1$, define
\begin{equation}\label{k00}
K  ^{+}=\frac{1}{\pi\sqrt{(\varepsilon^2+1) d}}\left(-\varepsilon +\sqrt{(u_*^2-2u_*v_*)^2(\varepsilon^2+1)- 1}\right)^{1/2},~ \mathrm{for} ~ (u_*^2-2u_*v_*)^2(\varepsilon^2+1)\geq 1,
\end{equation}
then $p_{K  ^{+}}^2-s_{K  ^{+}}^2-2r_{K  ^{+}}=0$, and $K^+$ is the only one, which may be positive root. We have following results.
\begin{theorem}\label{thm:2.9}
Suppose that ${\bf (N_0)}$ holds,  and $\varepsilon\geq\varepsilon_*(d),~d>0$. Then for $k\in \mathbb{N}_0$,
\begin{enumerate}
\item when $0\leq k<K^0$, equation (\ref{omegak}) has a positive root $\omega_k^+$.
\item
\begin{enumerate}
\item  if $(u_*^2-2u_*v_*)^2(\varepsilon^2+1)<1$, equation (\ref{omegak}) has no positive root when $k\geq K^0$.
\item  if $(u_*^2-2u_*v_*)^2(\varepsilon^2+1)\geq 1$,
\begin{enumerate}
\item  when $K^0\geq K^+$, equation (\ref{omegak}) has no positive root for $k\geq K^0$.
\item when $K^0<K^+$, there is a $K_*\in(K^0,K^+)$, such that equation (\ref{omegak}) has two positive roots $\omega_k^{\pm}$, a positive root $\omega_k^{+}$, and has no positive root for $k\in (K^0,K_*),~k= K^0~\mathrm{or}~K_*$, $k>K_*$, respectively.
\end{enumerate}
\end{enumerate}
\end{enumerate}
\end{theorem}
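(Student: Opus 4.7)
Setting $X:=\omega_k^2$, equation \eqref{omegak} becomes the quadratic $X^2+AX+B=0$ with $A:=p_k^2-s_k^2-2r_k$ and $B:=r_k^2-q_k^2$. Counting positive real roots $\omega_k$ therefore reduces to sign analysis of $A$, $B$, and the discriminant $\Delta:=A^2-4B$ as $k$ varies over $\mathbb{N}_0$; I would organize the proof around that trinity.

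First I would pin down the sign of $B$. From $B=(r_k+q_k)(r_k-q_k)=DET_k\cdot(r_k-q_k)$ together with the standing hypothesis $\varepsilon\ge\varepsilon_*(d)$ one has $DET_k\ge 0$, while $r_k-q_k$ is a quadratic in $k^2$ with value $-u_*^2<0$ at $k=0$ and positive leading coefficient $\varepsilon$, hence a single positive zero, which by \eqref{k0} is precisely $k=K^0$. Consequently $B<0$ on $[0,K^0)$, $B=0$ at $k=K^0$, and $B>0$ on $(K^0,\infty)$. Part (1) is then immediate: for $0\le k<K^0$ the product of the two real roots of $X^2+AX+B=0$ is negative, so exactly one root is positive, giving $\omega_k^+>0$.

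For $k\ge K^0$ one has $B\ge 0$, so a positive $X$-root requires both $A<0$ and $\Delta\ge 0$. Writing $A=(\varepsilon^2+1)y^2+2\varepsilon y+1-(u_*^2-2u_*v_*)^2$ with $y:=dk^2\pi^2$, $A$ is strictly increasing on $[0,\infty)$ and its discriminant in $y$ equals $4[(\varepsilon^2+1)(u_*^2-2u_*v_*)^2-1]$. Under the hypothesis of part (2)(a) this discriminant is negative and $A(0)>0$, so $A>0$ everywhere on $[0,\infty)$ and no positive root can exist for $k\ge K^0$. In part (2)(b) the monotonicity of $A$ combined with \eqref{k00} shows $A<0$ on $(0,K^+)$ and $A\ge 0$ on $[K^+,\infty)$; if $K^0\ge K^+$ this forces $A\ge 0$ throughout $[K^0,\infty)$, settling (2)(b)(i).

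The remaining case (2)(b)(ii), $K^0<K^+$, is where both $A<0$ and $B>0$ hold on the nontrivial interval $(K^0,K^+)$ and the number of positive $X$-roots is governed entirely by $\Delta$. At the endpoints $\Delta(K^0)=A(K^0)^2>0$ and $\Delta(K^+)=-4B(K^+)<0$, so the intermediate value theorem produces some $K_*\in(K^0,K^+)$ with $\Delta(K_*)=0$, and the classification announced in the theorem follows provided $\Delta$ changes sign exactly once on this interval. I expect this uniqueness to be the main obstacle: $A^2$ decreases on $(K^0,K^+)$ because $A$ increases through negative values, but $B=DET_k\cdot(r_k-q_k)$ is a product of positive factors only the second of which is plainly increasing past $K^0$. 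I would close the gap either by verifying that the minimizer of $DET_k$ lies to the left of $K^0$ (so that $DET_k$ is also increasing on $(K^0,K^+)$), or by a direct derivative computation of $\Delta$ from the explicit formulas \eqref{cf}.
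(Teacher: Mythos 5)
Your proposal is correct and takes essentially the same route as the paper: reduce \eqref{omegak} to a quadratic in $X=\omega_k^2$, get part (1) from the negativity of the constant term $r_k^2-q_k^2$ for $k<K^0$, rule out positive roots in (2)(a) and (2)(b)(i) from the sign of $p_k^2-s_k^2-2r_k$, and produce $K_*$ in (2)(b)(ii) by the intermediate value theorem applied to $\Delta(k)=(s_k^2-p_k^2+2r_k)^2-4(r_k^2-q_k^2)$, using $\Delta(K^0)>0$ and $\Delta(K^+)<0$. The one step you flag as an open obstacle --- that $\Delta$ changes sign exactly once on $(K^0,K^+)$, which is what the claim ``no positive root for $k>K_*$'' actually requires --- is not established in the paper either: the paper's proof simply asserts $\Delta(k)>0$ on $(K^0,K_*)$ and never discusses $k\in(K_*,K^+)$, so you have correctly identified the only genuinely incomplete point, and your two suggested repairs (monotonicity of $DET_k\cdot(r_k-q_k)$ past $K^0$ combined with the decrease of $(s_k^2-p_k^2+2r_k)^2$ there, or a direct derivative computation from \eqref{cf}) are the natural ways to close it.
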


\begin{proof}
Conclusion 1 is obvious.
\end{proof}

Under the conditions of 2(a), we have $s_k^2-p_k^2+2r_k<0$ and $r_k^2-q_k^2>0$ for $k>K^0$, so equation (\ref{omegak}) has no positive root for $k>K^0$.

If conditions of 2(b)-i are satisfied,  by $k\geq K^0$ we have $s_k^2-p_k^2+2r_k<0$ and $r_k^2-q_k^2>0$, which means that $\omega_k^{-}$ and $\omega_k^{+}$ are not positive roots.

When conditions of 2(b)-ii hold, letting
$$
\Delta (k):=(s_k^2-p_k^2+2r_k)^2
-4(r_k^2-q_k^2),$$
we have $
\Delta (K^0)=(s_{K^0}^2-p_{K^0}^2+2r_{K^0})^2
>0$ and $\Delta (K^+)=
-4(r_{K^+}^2-q_{K^+}^2)<0$. So there exist $K_*\in(K^0,K^+)$ such that
$\Delta (K_*)=0 $, and $\Delta (k)>0,~r_k^2-q_k^2>0$ for $k\in (K^0,K_*)$ which means $\omega_k^{-}$ and $\omega_k^{+}$ are positive roots. By $(s_{K^0}^2-p_{K^0}^2+2r_{K^0})^2
>0$ and $r_{K^0}^2-q_{K^0}^2=0$,  we have $\omega_{K^0}^{-}=0$, so there is only one positive root $\omega_{K^0}^{+}$ at $K^0$. By $\Delta (K_*)=0$ and $(s_{K_*}^2-p_{K_*}^2+2r_{K_*})^2>0$, we have $\omega_{K_*}^+=\omega_{K_*}^- >0$, so there is also only one positive root at $K_*$. $\Box$

Based on Theorem \ref{thm:2.9}, we have following corollary.

\begin{corollary}
Suppose that ${\bf (N_0)}$ holds,  and $\varepsilon\geq\varepsilon_*(d),~d>0$, we define that
\begin{equation}\label{k*}
K^*:=\left\{\begin{array}{ll} K^0,
&  \mathrm{for} ~(u_*^2-2u_*v_*)^2(\varepsilon^2+1)<1\\
K^0, & \mathrm{for} ~(u_*^2-2u_*v_*)^2(\varepsilon^2+1)\geq 1~ \mathrm{and} ~K^0\geq K^+,\\
K_*, & \mathrm{for} ~(u_*^2-2u_*v_*)^2(\varepsilon^2+1)\geq 1~ \mathrm{and} ~K^0< K^+,
\end{array}\right.
\end{equation}
then equation (\ref{omegak}) has at least a positive root $\omega_k^+$ for $0\leq k<K^*$ and $k\in \mathbb{N}_0$.
\end{corollary}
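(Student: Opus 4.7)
The plan is to view this corollary as a direct repackaging of Theorem \ref{thm:2.9}, read through the piecewise definition of $K^*$. No new calculation is required: the task is simply to check that in each branch of the definition, every integer $k$ with $0\leq k<K^*$ falls in a subrange already shown by Theorem \ref{thm:2.9} to carry the positive root $\omega_k^+$ of equation \eqref{omegak}.

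First I would invoke Case~1 of Theorem \ref{thm:2.9}, which for all three branches immediately supplies a positive root $\omega_k^+$ for every integer $k$ with $0\leq k<K^0$. The definition sets $K^*=K^0$ in the first two branches and $K^*=K_*>K^0$ in the third, so the only remaining integers to handle are those in $[K^0,K^*)$. In the first two branches this residual set is empty, so Case~1 by itself already yields the claim; for completeness, Cases~2(a) and 2(b)-i show that no positive root of \eqref{omegak} exists once $k\geq K^0$, which confirms that $K^*=K^0$ is the sharp threshold in those branches.

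For the third branch, where $(u_*^2-2u_*v_*)^2(\varepsilon^2+1)\geq 1$ and $K^0<K^+$, I would invoke Case~2(b)-ii of Theorem \ref{thm:2.9}: it produces a positive root $\omega_k^+$ at $k=K^0$, both positive roots $\omega_k^{\pm}$ for every $k\in(K^0,K_*)$, and a positive root at $k=K_*$ itself. Combined with Case~1, this yields a positive root $\omega_k^+$ throughout the full closed interval $[0,K_*]$, which of course contains every integer $k$ with $0\leq k<K_*=K^*$. There is essentially no obstacle, since Theorem \ref{thm:2.9} has already done the analytic work of counting real roots of the quadratic \eqref{omegak} in $\omega_k^2$; the content of the corollary is the observation that $K^*$ has been defined precisely so that $\{k\in\mathbb{N}_0:k<K^*\}$ equals the union of those $k$-ranges along which Theorem \ref{thm:2.9} produces the branch $\omega_k^+$.
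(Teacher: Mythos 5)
Your proposal is correct and matches the paper's approach exactly: the paper offers no separate argument, simply stating that the corollary follows from Theorem \ref{thm:2.9}, and your case-by-case reading of the piecewise definition of $K^*$ (Case 1 for $k<K^0$, Case 2(b)-ii for the residual integers in $[K^0,K_*)$ in the third branch) is precisely the intended justification.
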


Thus we define $\tau_k\in (0,2\pi]$ which is a root of (\ref{imagi}), and
\begin{equation}\label{tau}
\tau_k^{(j)}=\tau_k+\frac{2\pi j}{\omega_k^+},~j,k=0,1,2,\ldots, 0\leq k<K^*.
\end{equation}
where $\pm\mathrm{i}\omega_k^+$ are pure
imaginary eigenvalues of characteristic equation (\ref {eigen}) when $\tau=\tau_k^{(j)},~j=0,1,2,\ldots, ~0\leq k<K^*$.  Then following theorem on tranversality condition on standard Hopf bifurcation theorem holds:

\begin{theorem}\label{thm:2.11}
 Suppose that ${\bf (N_0)}$ holds,  and $\varepsilon\geq\varepsilon_*(d),~d>0$.  Let $\tau_k^{(j)}$ be defined as in (\ref{tau}), and $\lambda(\tau)=\alpha(\tau)+\mathrm{i}\omega(\tau)$ be a root of $D_k(\lambda,\tau)=0$  in (\ref {eigen}) near
$\tau=\tau_k^{(j)}$ satisfying $\alpha(\tau_k^{(j)})=0, ~\omega(\tau_k^{(j)})=\omega_k^+,~k\in \mathbb{N}_0,~0\leq k<K^*$ . Then
$$\frac{\mathrm{d Re}\lambda(\tau)}{\mathrm{d}\tau}|_{\tau_k^{(j)}}>0.$$
\end{theorem}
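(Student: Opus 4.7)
The plan is to establish the transversality condition by implicitly differentiating the characteristic equation $D_k(\lambda,\tau,\varepsilon)=0$ in \eqref{eigen} with respect to $\tau$ and evaluating at $\lambda=\mathrm{i}\omega_k^+$, $\tau=\tau_k^{(j)}$. Since the sign of $\frac{\mathrm{d}\,\mathrm{Re}\,\lambda}{\mathrm{d}\tau}$ coincides with the sign of $\mathrm{Re}\bigl((\mathrm{d}\lambda/\mathrm{d}\tau)^{-1}\bigr)$, it is computationally cleaner to work with the reciprocal derivative. Implicit differentiation of \eqref{eigen} gives
\begin{equation*}
\left(\frac{\mathrm{d}\lambda}{\mathrm{d}\tau}\right)^{-1}=\frac{(2\lambda+p_k)+s_k e^{-\lambda\tau}-\tau(s_k\lambda+q_k)e^{-\lambda\tau}}{\lambda(s_k\lambda+q_k)e^{-\lambda\tau}}.
\end{equation*}
Using the relation $(s_k\lambda+q_k)e^{-\lambda\tau}=-(\lambda^2+p_k\lambda+r_k)$ from \eqref{eigen}, I would eliminate $e^{-\lambda\tau}$ in the numerator and arrive at the compact expression
\begin{equation*}
\left(\frac{\mathrm{d}\lambda}{\mathrm{d}\tau}\right)^{-1}=-\frac{2\lambda+p_k}{\lambda(\lambda^2+p_k\lambda+r_k)}+\frac{s_k}{\lambda(s_k\lambda+q_k)}-\frac{\tau}{\lambda}.
\end{equation*}

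Next I would substitute $\lambda=\mathrm{i}\omega_k^+$ and take the real part. The last term $-\tau/\lambda$ is purely imaginary and drops out. After rationalizing the two remaining fractions and using the identities $r_k-\omega^2=-\mathrm{Re}(\lambda^2+p_k\lambda+r_k)$ and $|\lambda^2+p_k\lambda+r_k|^2=(s_k^2\omega^2+q_k^2)$ (from equating real and imaginary parts at $\lambda=\mathrm{i}\omega$), a short calculation should yield
\begin{equation*}
\mathrm{Re}\left(\frac{\mathrm{d}\lambda}{\mathrm{d}\tau}\right)^{-1}\bigg|_{\lambda=\mathrm{i}\omega_k^+}=\frac{2(\omega_k^+)^2+(p_k^2-s_k^2-2r_k)}{s_k^2(\omega_k^+)^2+q_k^2}.
\end{equation*}
The denominator is strictly positive because the pair $(s_k,q_k)$ cannot vanish simultaneously under $\mathbf{(N_0)}$.

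Finally, I would exploit the characterization of $\omega_k^+$ as the larger root of the quadratic $z^2+(p_k^2-s_k^2-2r_k)z+(r_k^2-q_k^2)=0$ given in \eqref{omegak}–\eqref{omegakk}. Setting $z=\omega^2$ and $G(z)=z^2+(p_k^2-s_k^2-2r_k)z+(r_k^2-q_k^2)$, the numerator above is precisely $G'((\omega_k^+)^2)$. From the explicit formula \eqref{omegakk},
\begin{equation*}
2(\omega_k^+)^2+(p_k^2-s_k^2-2r_k)=\sqrt{(s_k^2-p_k^2+2r_k)^2-4(r_k^2-q_k^2)}>0,
\end{equation*}
where the strict positivity follows from the case analysis in Theorem \ref{thm:2.9} (in cases 1 and 2(b)-ii with $k\neq K_*,K^0$ the discriminant is strictly positive; the boundary cases $k=K^0,K_*$ correspond exactly to the excluded degenerate roots). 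Therefore $\mathrm{Re}\bigl((\mathrm{d}\lambda/\mathrm{d}\tau)^{-1}\bigr)>0$, which immediately gives $\frac{\mathrm{d}\,\mathrm{Re}\,\lambda(\tau)}{\mathrm{d}\tau}\big|_{\tau_k^{(j)}}>0$.

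The main obstacle is purely algebraic: carrying the substitution $(s_k\lambda+q_k)e^{-\lambda\tau}=-(\lambda^2+p_k\lambda+r_k)$ through without sign errors, and recognizing the numerator as the derivative $G'$ evaluated at the upper root. Once that identification is made, the inequality $G'((\omega_k^+)^2)>0$ is forced by the very choice of the $+$ branch in \eqref{omegakk}, so the transversality condition holds without any extra hypothesis beyond those already in force.
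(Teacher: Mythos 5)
Your proof is correct, and it supplies an argument that the paper itself omits: for Theorem \ref{thm:2.11} the authors simply write ``See \cite{YGLM} for the proof,'' so there is no in-paper proof to compare against. Your computation is the standard transversality argument for delay equations: I checked that implicit differentiation gives exactly your expression for $(\mathrm{d}\lambda/\mathrm{d}\tau)^{-1}$, that after substituting $\lambda=\mathrm{i}\omega_k^+$ and using $s_k^2(\omega_k^+)^2+q_k^2=|{-(\omega_k^+)^2+\mathrm{i}p_k\omega_k^+ +r_k}|^2$ one indeed lands on $\mathrm{Re}\bigl((\mathrm{d}\lambda/\mathrm{d}\tau)^{-1}\bigr)=\bigl(2(\omega_k^+)^2+p_k^2-s_k^2-2r_k\bigr)/\bigl(s_k^2(\omega_k^+)^2+q_k^2\bigr)$, and that the numerator equals $G'\bigl((\omega_k^+)^2\bigr)=\sqrt{(s_k^2-p_k^2+2r_k)^2-4(r_k^2-q_k^2)}$, which is strictly positive for every integer $k$ with $0\le k<K^*$ by the case analysis of Theorem \ref{thm:2.9} (for $k<K^0$ because $r_k^2-q_k^2<0$, and for $K^0\le k<K_*$ because $\Delta(k)>0$ there). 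Two cosmetic points: your parenthetical identity should read $r_k-\omega^2=+\mathrm{Re}(\lambda^2+p_k\lambda+r_k)$ rather than with a minus sign (the final displayed formula is nevertheless right), and the positivity of the denominator is most cleanly justified by noting that $s_k^2\omega^2+q_k^2=(r_k-\omega^2)^2+p_k^2\omega^2\ge p_k^2\omega^2>0$ since $p_k\ge 1$, rather than by arguing that $s_k$ and $q_k$ cannot vanish simultaneously. Neither issue affects the validity of the argument.
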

See \cite {YGLM} for the proof of Theorem 2.2.

We further assume that
\begin{enumerate}
\item[${\bf (N_3)}$] There is an integer $k_2 \in [0,K^*)$ satisfying that for any integer $k \in [0,K^*)$, we have
  $$
  \begin{array}{rll}
  \tau_{k} =& \operatorname*{min}\limits_{0\leq s<K^*,s\in \mathbb{N}_0}\tau_s  ,~~~&k=k_2,\\
  \tau_{k}> & \operatorname*{min}\limits_{0\leq s<K^*,s\in \mathbb{N}_0}\tau_s ,~~~&k\neq k_2.
\end{array}
$$
\end{enumerate}

So far, we summarize our results on the stability of $(u_*,v_*)$ and Hopf bifurcation of system (\ref {3.1b}) in following theorem.

\begin{theorem}\label{thm:2.12}
Suppose that ${\bf (N_0)}$  and ${\bf (N_3)}$ hold,  and $\varepsilon>\varepsilon_*(d),~d>0$. Then
\begin{enumerate}
\item At $\tau=\tau_k^{(j)}$ with $0\leq k<K^*$ and $j, k\in \mathbb{N}_0$, system (\ref {3.1b}) undergoes $k-$mode Hopf bifurcation near $(u_*,v_*)$, and the bifurcating  periodic
solutions near $(\tau_k^{(j)}, u_*,v_*)$  can be parameterized as $(\tau(s), u(s),v(s))$ so that $\tau(s)=\tau_k^{(j)}+s$ for
$s\in(-\delta, 0) $ (or $s\in(0, \delta) $) for some small $\delta > 0$, and $(u(s),v(s))=(u_*,v_*)+[r_1(1,p_{k}^0)e^{\texttt{i}\tau_{k}\omega_{k}^+ \theta}+r_2(1,\overline{p_{k}^0})e^{-\texttt{i}\tau_{k}\omega_{k}^+ \theta}]\cos (k \pi x),~~-1\leq \theta \leq 0 $, where $p_{k}^0=\frac{1}{u_*^2}(1-2u_*v_*e^{-\mathrm{i}\tau_{k}\omega_{k}^+}
-d\varepsilon_*\mu_{k}+\mathrm{i}\omega_{k}^+)e^{\mathrm{i}\tau_{k}\omega_{k}^+}$ (see \eqref{pqN}), $r_1~\mathrm{or}~r_2\neq 0$.
\item When $\tau=\tau_{k_2}$,  $D_{k_2}(\lambda,\tau,\varepsilon)$ has a pair of pure imaginary roots, with all other roots of $D_{k_2}(\lambda,\tau,\varepsilon)$ and all roots  of $D_{k}(\lambda,\tau,\varepsilon),k\neq k_2$  having negative real parts. And equilibrium $(u_*,v_*)$ is locally asymptotically stable in system (\ref{3.1b}) with $\tau\in [0,\tau_{k_2})$.
\end{enumerate}
\end{theorem}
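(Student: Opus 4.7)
The plan is to deduce Part 1 from the classical Hopf bifurcation theorem for delayed reaction--diffusion equations (as used in \cite{YGLM, JAS}) and Part 2 from stability at $\tau=0$ combined with a continuity-of-roots argument controlled by hypothesis ${\bf (N_3)}$.

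For Part 1, the Hopf theorem requires three ingredients at each candidate $\tau=\tau_k^{(j)}$, $0\le k<K^*$. The existence of the purely imaginary eigenvalue pair $\pm\mathrm{i}\omega_k^+$ is exactly the construction preceding \eqref{tau}; its simplicity as a root of $D_k(\lambda,\tau_k^{(j)},\varepsilon)$ follows from a direct check that $\partial_\lambda D_k(\mathrm{i}\omega_k^+,\tau_k^{(j)},\varepsilon)\neq 0$, analogous to the differentiation done in Theorem \ref{thm:2.11}; the transversality $\mathrm{d}\,\mathrm{Re}\,\lambda/\mathrm{d}\tau>0$ is Theorem \ref{thm:2.11} itself. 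Non-resonance across modes needs to be verified as well: one must rule out any other $D_m$, $m\neq k$, having a purely imaginary root at $\tau_k^{(j)}$, which follows from the classification of crossings in \eqref{omegak} combined with the separation of the crossing sequences $\{\tau_m^{(j)}\}$. Given these ingredients, the Hopf theorem produces a local branch $(\tau(s),u(s),v(s))$ of periodic solutions with $\tau(s)=\tau_k^{(j)}+O(s)$. The explicit centre eigenfunction is obtained by substituting $\lambda=\mathrm{i}\omega_k^+$ into the kernel equation from \eqref{3.3k} and solving the first row for the component ratio, yielding the asserted $p_k^0$; combining the complex eigenfunction with its conjugate and multiplying by $\cos(k\pi x)$ then produces the claimed real parametrization.

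For Part 2, the starting point is stability at $\tau=0$. Under $\varepsilon>\varepsilon_*(d)$, Lemma \ref{l22}(d) and the definition of $\varepsilon_*$ force $DET_k=r_k+q_k>0$ for every $k\in\mathbb{N}_0$, while ${\bf (N_0)}$ gives $TR_k=-(p_k+s_k)<0$, so the Routh--Hurwitz test applied to \eqref{eigen-0} places every root of every $D_k(\lambda,0,\varepsilon)$ in the open left half-plane; this is Corollary \ref{cor:2.4}. Continuity of roots in $\tau$ then implies that $(u_*,v_*)$ retains stability until some $D_k$ first loses a root across the imaginary axis. Theorem \ref{thm:2.9} shows that for $k\ge K^*$ equation \eqref{omegak} has no positive root, so these modes never yield a crossing; for $k<K^*$ the first crossing time in the $k$-mode is $\tau_k^{(0)}=\tau_k$, and ${\bf (N_3)}$ identifies $\tau_{k_2}$ as the unique minimum of $\{\tau_k:0\le k<K^*\}$. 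Therefore on $[0,\tau_{k_2})$ every characteristic root lies in the open left half-plane, whereas at $\tau=\tau_{k_2}$ only $D_{k_2}$ carries purely imaginary roots, namely $\pm\mathrm{i}\omega_{k_2}^+$.

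The main technical obstacle is uniform control of the spectrum over the infinite family $\{D_k\}_{k\in\mathbb{N}_0}$: one must verify that no root can drift in from $\mathrm{Re}\,\lambda\to-\infty$ prior to $\tau_{k_2}$, and that imaginary crossings do not accumulate as $k\to\infty$. This is handled through the growth rates $p_k\sim k^2$ and $r_k\sim k^4$, against which $s_k$ is bounded and $q_k$ grows only linearly in $k^2$: a standard argument then shows that for all sufficiently large $k$ both roots of $D_k(\lambda,\tau,\varepsilon)$ have very negative real part uniformly in $\tau$ on compact sets, confining any possible imaginary crossing to the finite index set $k<K^*$ and legitimising the continuity argument.
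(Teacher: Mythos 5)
Your proposal is correct and follows essentially the same route as the paper, which states Theorem \ref{thm:2.12} as a summary of the preceding analysis: the crossing construction in \eqref{imagi}--\eqref{tau}, Theorem \ref{thm:2.9} confining purely imaginary roots to the finitely many modes $k<K^*$, the transversality of Theorem \ref{thm:2.11} (cited to \cite{YGLM}), stability of all modes at $\tau=0$ via $DET_k>0$, $TR_k<0$, and ${\bf (N_3)}$ selecting $\tau_{k_2}$ as the first crossing. Your added remarks on uniform spectral control in $k$ and on the eigenvector computation for $p_k^0$ only make explicit what the paper leaves implicit.
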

\begin{remark} When $\omega_k^-$ is also a positive root, we can analogously define $\tau_k^-\in (0,2\pi]$ which is a root of (\ref{imagi}), and define
\begin{equation}\label{tau-}
\tau_k^{(j-)}=\tau_k^-+\frac{2\pi j}{\omega_k^-},~j,k=0,1,2,\ldots, K^ 0\leq k<K_*,
\end{equation}
where $\pm\mathrm{i}\omega_k^-$ are corresponding pure
imaginary eigenvalues of characteristic equation (\ref {eigen}). The tranversality condition can be accordingly described as:
$$\frac{\mathrm{d Re}\lambda(\tau)}{\mathrm{d}\tau}|_{\tau_k^{(j-)}}<0.$$
Thus, when all roots  of $D_{k}(\lambda,\tau,\varepsilon)$ with $\tau=0$ have negative real parts for any $k\in \mathrm{N}_0$, we assert that
$$
\operatorname*{min}\limits_{0\leq s<K^*,s\in \mathbb{N}_0}\tau_s< \operatorname*{min}\limits_{K^0\leq s<K_*,s\in \mathbb{N}_0}\tau_s^-.
$$
\end{remark}
\begin{remark}
 By theorem 1 and 4, Hopf bifurcation doesn't occurs in system (\ref{3.1b}) with diffusion and without delay. Hence, Hopf bifurcation is induced by delay, that is, delay-driven oscillation occurs.
\end{remark}
\noindent {\bf(3) Diffusion and Delay-induced a Turing-Hopf bifurcation}

From above discussions, it follows that, for any $\tau > 0$ and $d=0$, the FDEs does not undergo any
Hopf-steady state bifurcation, and  for any $d > 0$ and $\tau =0$, the PDEs system does not undergo any
Hopf-steady state bifurcation. However, in this section, we shall show the joint effects of diffusion and delay, which induce otherwise non-existing Hopf-steady state bifurcation, or rather, it is a Turing-Hopf bifurcation in PFDEs.
By {\bf Theorem \ref{T23}, \ref{thm:2.9}, \ref{thm:2.11}, \ref{thm:2.12}} and \cite [Definition 2.1] {JAS}, we summarize our results on the stability of $(u_*,v_*)$ and Turing-Hopf bifurcation of system (\ref {3.1b}) in following theorem.

\begin{theorem}\label{thm:2.15}
Suppose that ${\bf (N_0)}$ and ${\bf (N_3)}$ hold. Then
\begin{enumerate}
\item System (\ref {3.1b}) undergoes $(k_1,k_2)$-mode Turing-Hopf bifurcation near $(u_*,v_*)$  at $\varepsilon=\varepsilon_*(k_1,d)\triangleq\varepsilon_*$, $\tau=\tau_{k_2}$ for $d\in (d_{k_1,k_1+1},d_{k_1-1,k_1})$.% with $0\leq k<K^*$ and $j, k\in \mathbb{N}_0$.
\item The equilibrium $(u_*,v_*)$ is asymptotically stable in system (\ref{3.1b}) with $\tau\in [0,\tau_{k_2})$ for $\varepsilon>\varepsilon_*$, and unstable for $0<\varepsilon<\varepsilon_*$.
\end{enumerate}
\end{theorem}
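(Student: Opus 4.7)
The plan is to verify, at the candidate codimension-two point $(\varepsilon,\tau)=(\varepsilon_*,\tau_{k_2})$, all the hypotheses of the Turing--Hopf bifurcation definition (\cite{JAS}, Definition 2.1) by stitching together the spectral information already produced in Theorems \ref{T23}, \ref{thm:2.9}, \ref{thm:2.11}, and \ref{thm:2.12}, and then to read the stability statement off the same spectrum.

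First I would assemble the spectrum at $(\varepsilon_*,\tau_{k_2})$. Fix $d\in(d_{k_1,k_1+1},d_{k_1-1,k_1})$ and set $\varepsilon=\varepsilon_*(k_1,d)$. Theorem \ref{T23} furnishes a simple real eigenvalue $\lambda=0$ of $D_{k_1}(\lambda,\tau,\varepsilon_*)$ that persists for every $\tau\geq 0$, the Turing transversality $\partial D_{k_1}/\partial\varepsilon|_{\lambda=0,\varepsilon=\varepsilon_*}<0$, and the key spectral separation $DET_k>0$ for all $k\in\mathbb{N}_0\setminus\{k_1\}$, so that $\lambda=0$ is an isolated root belonging only to the $k_1$-mode. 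Theorems \ref{thm:2.9} and \ref{thm:2.11}, combined with hypothesis $({\bf N_3})$, produce the delay $\tau=\tau_{k_2}$ at which $D_{k_2}(\lambda,\tau_{k_2},\varepsilon_*)$ acquires a simple pair $\pm\mathrm{i}\omega_{k_2}^+$ with Hopf transversality $\frac{d\,\mathrm{Re}\,\lambda}{d\tau}\big|_{\tau_{k_2}}>0$; hypothesis $({\bf N_3})$ simultaneously forces $\tau_k>\tau_{k_2}$ for every $k\neq k_2$, so no other Fourier mode contributes a purely imaginary root at the value $\tau_{k_2}$.

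Next I would verify that every remaining root of $D_k(\lambda,\tau_{k_2},\varepsilon_*)=0$, $k\in\mathbb{N}_0$, lies strictly in the open left half-plane. At $\tau=0$ this is the content of Theorem \ref{T23}(1)(a). To propagate to $\tau=\tau_{k_2}$ I would use a standard continuity argument: roots of $D_k(\lambda,\tau,\varepsilon_*)$ vary continuously in $\tau$ and can cross the imaginary axis only at values of the form \eqref{tau}; hypothesis $({\bf N_3})$ designates $\tau_{k_2}$ as the smallest such crossing, so no mode other than $k_2$ has reached the imaginary axis on $[0,\tau_{k_2}]$, and the zero root inherited from $D_{k_1}$ remains simple and isolated throughout. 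Part (1) of the theorem then follows from Definition 2.1 of \cite{JAS}. For part (2), the stability statement on $\{\varepsilon>\varepsilon_*,\ \tau\in[0,\tau_{k_2})\}$ is immediate from the spectral picture just established (the zero eigenvalue of $D_{k_1}$ has moved strictly left by the Turing transversality, and no mode has yet hit the imaginary axis since $\tau<\tau_{k_2}$), while for $0<\varepsilon<\varepsilon_*$ the same transversality $\partial\lambda/\partial\varepsilon<0$ at $k_1$ places a positive real root of $D_{k_1}$, giving instability for every $\tau\geq0$.

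The principal obstacle is bookkeeping rather than depth: the Hopf data $\tau_{k_2}$, $\omega_{k_2}^+$ and $({\bf N_3})$ are originally formulated under the strict inequality $\varepsilon>\varepsilon_*(d)$, whereas the Turing--Hopf point sits exactly on the boundary $\varepsilon=\varepsilon_*(d)$. One must therefore check that, as $\varepsilon\downarrow\varepsilon_*$, the Hopf root $\omega_{k_2}^+(\varepsilon)$ and the crossing delay $\tau_{k_2}(\varepsilon)$ extend continuously to the limit and that $({\bf N_3})$ persists there, so that no extra mode accidentally produces a competing imaginary root exactly at $\varepsilon=\varepsilon_*$. Once that continuity check is in place the remainder of the argument is an assembly of previously established facts.
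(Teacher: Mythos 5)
Your proposal is correct and follows essentially the same route as the paper, which gives no separate argument for this theorem beyond assembling Theorems \ref{T23}, \ref{thm:2.9}, \ref{thm:2.11}, \ref{thm:2.12} together with Definition 2.1 of \cite{JAS}; your continuity-in-$\tau$ argument just makes explicit what the paper leaves implicit. The boundary issue you flag at the end is already covered by the paper's hypotheses, since Theorems \ref{thm:2.9} and \ref{thm:2.11} are stated under $\varepsilon\geq\varepsilon_*(d)$ rather than the strict inequality.
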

%{\bf  Remark 4.4} The steady state bifurcation is induced by diffusion, and the Hopf bifurcation is induced by delay. Moreover, diffusion and Delay can induce a Hopf-steady state bifurcation.

\section{Third-order normal form of Turing-Hopf bifurcation}% with original perturbation parameters for RFDEs}
%&&&&&&&&&&&&&&&&&&&&&&&&&&&&&&&&&&&&&&&&&&&&&&&&&&&&&&&&&&&&&&&&&&&&&&&&&&&&&&&&&&&
%                  New section 3-2                                                      &
%&&&&&&&&&&&&&&&&&&&&&&&&&&&&&&&&&&&&&&&&&&&&&&&&&&&&&&&&&&&&&&&&&&&&&&&&&&&&&&&&&&&
In this section, we are interested in determining the third-order normal forms with
original perturbation parameters for system
(\ref{3.1b})  with $(k_1,k_2)$-mode Turing-Hopf bifurcation when $(\tau, \varepsilon)$ near $(\tau_{k_2}, \varepsilon_*)$, according to the formula in \cite{JAS}. Rewrite $\tau=\tau_{k_2}+\alpha_1,
\varepsilon=\varepsilon_*+\alpha_2$. Then for
$(u_*,v_*)$ and $\alpha_1=0$, $\alpha_2=0$, system (\ref{3.1b}) exhibits $(k_1,k_2)$-mode Turing-Hopf bifurcation. % In fact, we have the following Theorem on the properties of the Bogdanov-Takens bifurcation:
We normalize the
delay $\tau$ in system (\ref{3.1b}) by time-scaling $t\rightarrow
t/\tau$, and translate $(u_*,v_*)$ into origin. Then, system
(\ref{3.1b}) is transformed into
\begin{equation}\label{rdu2}
  \begin{array}{rl}
\frac{\partial u}{dt}=&\tau\big(\varepsilon du_{xx}+a-(u(t,x)+u_*)+(u(t-1,x)+u_*)^2(v(t-1,x)+v_*)\big), \\
\frac{\partial v}{dt}=&\tau\big(d v_{xx}+b-(u(t-1,x)+u_*)^2(v(t-1,x)+v_*)\big). \\
  \end{array}
\end{equation}
The corresponding characteristic equations are
\begin{equation}
\lambda^2+\tau p_k(\varepsilon)\lambda+\tau^2r_k(\varepsilon)+(\tau s_k\lambda+\tau^2 q_k(\varepsilon))e^{-\lambda}=0,\;k\in\mathbb{N}_0,
\end{equation}

Define $U(t)=(u(t),v(t))$, and introduce two bifurcation parameters
$\alpha=(\alpha_1,\alpha_2)$ by setting
\begin{equation}
\tau=\tau_{k_2}+\alpha_1,\;\; \varepsilon=\varepsilon_*+\alpha_2.
\end{equation}
Then, %by (\ref{eq6+-}) and (\ref{eq6+})
system (\ref{rdu2}) can be written in following form in $\mathcal{C}=C([-1,0];X)$:
\begin{equation}\label{lin}
\frac{d}{dt}U(t)=L_0(U_t)+D_0\Delta U(t)+\frac{1}{2}L_1(\alpha)U_t+\frac{1}{2}D_1(\alpha)\Delta U(t)+\frac{1}{2!}Q(U_t,U_t)+\frac{1}{3!}C(U_t,U_t,U_t)+\cdots,
\end{equation}
 with $X=\left\{(u,v):u,v\in W^{2,2}(\Omega): u'(0)=u'(1)=v'(0)=v'(1)=0\; \right\},$ $\Omega=(0,1)$,
and
$$
D_0=d\tau_{k_2} \left(
\begin{array}{cc}
\varepsilon_*&0\\
0&1
\end{array}\right),
$$
$$
D_1(\alpha)=2d\left(
\begin{array}{cc}
\alpha_1\varepsilon_*+\alpha_2\tau_{k_2}&0\\
0&\alpha_1
\end{array}\right),
$$
$$
L_0\varphi=\tau_{k_2}\left(
\begin{array}{c}
-\varphi^{(1)}(0)+2u_*v_*\varphi^{(1)}(-1)+u_*^2\varphi^{(2)}(-1) \\
-(2u_*v_*\varphi^{(1)}(-1)+u_*^2\varphi^{(2)}(-1))
\end{array}\right),
$$
$$
L_1(\alpha)\varphi=2\alpha_1\left(
\begin{array}{c}
-\varphi^{(1)}(0)+2u_*v_*\varphi^{(1)}(-1)+u_*^2\varphi^{(2)}(-1) \\
-(2u_*v_*\varphi^{(1)}(-1)+u_*^2\varphi^{(2)}(-1))
\end{array}\right),
$$
$$
Q(\varphi,\varphi)=2\tau_{k_2}\varphi^{(1)}(-1)[v_*\varphi^{(1)}(-1)+2u_*\varphi^{(2)}(-1)]\left(
\begin{array}{c}
1\\
-1
\end{array}\right),
$$
$$
C(\varphi,\varphi,\varphi)=3!\tau_{k_2}{\varphi^{(1)}}^2(-1)\varphi^{(2)}(-1)\left(
\begin{array}{c}
1\\
-1
\end{array}\right),~~
\varphi=\left(
\begin{array}{c}
\varphi^{(1)}\\
\varphi^{(2)}
\end{array}\right).
$$
Thus,
$$
Q_{XY}=2\tau_{k_2}\{v_*x_1(-1)y_1(-1)+u_*(x_1(-1)y_2(-1)+x_2(-1)y_1(-1))\}\left(
\begin{array}{c}
1\\
-1
\end{array}\right),
$$
$$
C_{XYZ}=2\tau_{k_2}[x_1(-1)y_1(-1)z_2(-1)+x_1(-1)y_2(-1)z_1(-1)+x_2(-1)y_1(-1)z_1(-1)]\left(
\begin{array}{c}
1\\
-1
\end{array}\right),
$$
where $
X=\left(
\begin{array}{c}
x_1\\
x_2
\end{array}\right), ~Y=\left(
\begin{array}{c}
y_1\\
y_2
\end{array}\right),~Z=\left(
\begin{array}{c}
z_1\\
z_2
\end{array}\right).$
The corresponding characteristic equations with $\tau=\tau_{k_2},\varepsilon=\varepsilon_*$ are represented as
\begin{equation}\label{eigen-11}
D_k(\lambda):=\lambda^2+\tau_{k_2}p_k(\varepsilon_*)\lambda+\tau_{k_2}^2r_k(\varepsilon_*)+(\tau_{k_2}s_k\lambda+\tau_{k_2}^2q_k(\varepsilon_*))e^{-\lambda}=0,\;k\in\mathbb{N}_0.
\end{equation}
By \cite{JAS}, we know that the normal forms restrict on center manifold up to the third order are
\begin{equation}\label{third}
	\begin{array}{rlc}
	\dot{z}_1=&a_{1}(\alpha)z_1+a_{11}z_1^2+a_{23}z_2\bar{z}_2++a_{111}z_1^3+ a_{123}z_1z_2\bar{z}_2+h.o.t.,\\
	 \dot{z}_2=&\mathrm{i}\omega_0z_2+b_{2}(\alpha)z_2+b_{12}z_1z_2+b_{112}z_1^2z_2+b_{223}z_2^2\bar{z}_2
	+ h.o.t. .\\
	 \dot{\bar{z}}_2=&-\mathrm{i}\omega_0\bar{z}_2+\overline{b_{2}(\alpha)}\bar{z}_2+\overline{b_{12}}z_1\bar{z}_2+\overline{b_{112}}z_1^2\bar{z}_2+\overline{b_{223}}z_2\bar{z}_2^2+
	h.o.t. ,
	\end{array}
	\end{equation}
Here, we only consider the case $k_1\neq 0,~k_2=0$ in \cite{JAS}, which is one of the most interesting and practical situations.
\begin{lemma} (see \cite{JAS}) \label{L1}	
	%{\bf Proposition 4.3}
For $k_2=0,~k_1\neq 0$ and Neumann boundary condition on spatial domain $\Omega =(0,l\pi),~l>0$, the parameters $a_{1}(\alpha),~b_{2}(\alpha),~a_{11},~a_{23},~a_{111},~a_{123},~b_{12},~b_{112},~b_{223}$ in (\ref{third}) are
	\begin{equation}\label{eq03-3++}
	\begin{array}{rlc}
    a_{1}(\alpha)=&\frac{1}{2}\psi_1 (0)(L_1(\alpha)\phi_1-\mu_{k_1}D_1(\alpha)\phi_1(0)),\\

	b_{2}(\alpha)=&\frac{1}{2}\psi_2 (0)(L_1(\alpha)\phi_2-\mu_{k_2}D_1(\alpha)\phi_2(0)),\\

	a_{11}=&a_{23}=b_{12}=0,\\
	
	a_{111}=&\frac{1}{4}\psi_1 (0)C_{\phi_1\phi_1\phi_1}+\frac{1}{\omega_0}\psi_1 (0)\mathrm{Re}(\mathrm{i}Q_{\phi_1\phi_2}\psi_2 (0))Q_{\phi_1\phi_1}+\psi_1 (0) Q_{\phi_1} ( h_{200}^{0}+\frac{1}{\sqrt{2}}h_{200}^{2k_1}),\\
	
	a_{123}=&\psi_1 (0)C_{\phi_1\phi_2\bar{\phi}_2}+\frac{2}{\omega_0}\psi_1 (0)\mathrm{Re}(\mathrm{i}Q_{\phi_1\phi_2}\psi_2 (0))Q_{\phi_2\bar{\phi}_2}+\\
	&\psi_1 (0) [Q_{\phi_1} ( h_{011}^{0}+\frac{1}{\sqrt{2}}h_{011}^{2k_1})
	+Q_{\phi_2}  h_{101}^{k_1}
	+Q_{\bar{\phi}_2}  h_{110}^{k_1}],\\
	
	b_{112}=&\frac{1}{2}\psi_2 (0)C_{\phi_1\phi_1\phi_2}+\frac{1}{2i\omega_0}\psi_2 (0)\{2Q_{\phi_1\phi_1}\psi_1 (0) Q_{\phi_1\phi_2} +[-Q_{\phi_2\phi_2}\psi_2 (0)
	
	+Q_{\phi_2\bar{\phi}_2}\bar{\psi}_2 (0)]Q_{\phi_1\phi_1} \}\\&+\psi_2 (0) (Q_{\phi_1}  h_{110}^{k_1}+Q_{\phi_2}  h_{200}^{0})\\
	
	b_{223}=&\frac{1}{2}\psi_2 (0)C_{\phi_2\phi_2\bar{\phi}_2}+\frac{1}{4i\omega_0}\psi_2 (0)\{
	\frac{2}{3}Q_{\bar{\phi}_2\bar{\phi}_2}\bar{\psi}_2 (0)Q_{\phi_2\phi_2}+[-2Q_{\phi_2\phi_2}\psi_2 (0)\\&
	
	+4Q_{\phi_2\bar{\phi}_2}\bar{\psi}_2 (0)]Q_{\phi_2\bar{\phi}_2}\}+\psi_2 (0) (Q_{\phi_2} h_{011}^0+Q_{\bar{\phi}_2} h_{020}^0).
	\end{array}
	\end{equation}
	where
\begin{equation}\label{eq03-3}
	\begin{array}{rlc}
	h_{200}^0(\theta)=&-\frac{1}{2}[\int_{-r}^0\mathrm{d}\eta _0(\theta)]^{-1}Q_{\phi_1\phi_1}+\frac{1}{2i\omega_0}(\phi_2(\theta)\psi_2(0)-\bar{\phi}_2(\theta)\bar{\psi}_2(0))]Q_{\phi_1\phi_1},\\
	
	h_{200}^{2k_1}(\theta)\equiv &-\frac{1}{2\sqrt{2}}[\int_{-r}^0\mathrm{d}\eta _{2k_1}(\theta)]^{-1}Q_{\phi_1\phi_1},\\
	
	h_{011}^0(\theta)=&-[\int_{-r}^0\mathrm{d}\eta _0(\theta)]^{-1}Q_{\phi_2\bar{\phi}_2}+\frac{1}{i\omega_0}(\phi_2(\theta)\psi_2(0)-\bar{\phi}_2(\theta)\bar{\psi}_2(0))]Q_{\phi_2\bar{\phi}_2},\\
	
	h_{011}^{2k_1}(\theta)=&0,\\
	
	 h_{020}^0(\theta)=&\frac{1}{2}[2i\omega_0I-\int_{-r}^0e^{2i\omega_0\theta}\mathrm{d}\eta _{0}(\theta)]^{-1}Q_{\phi_2\phi_2}e^{2i\omega_0\theta}-\frac{1}{2i\omega_0}[\phi_2(\theta)\psi_2(0)+\frac{1}{3}\bar{\phi}_2(\theta)\bar{\psi}_2(0)]Q_{\phi_2\phi_2},\\
	
	 h_{110}^{k_1}(\theta)=&[i\omega_0I-\int_{-r}^0e^{i\omega_0\theta}\mathrm{d}\eta _{k_1}(\theta)]^{-1}Q_{\phi_1\phi_2}e^{i\omega_0\theta}-\frac{1}{i\omega_0}\phi_1(0)\psi_1(0)Q_{\phi_1\phi_2},\\
	
	 h_{002}^0(\theta)=&\overline{h_{020}^0(\theta)},~~~h_{101}^{k_1}(\theta)=\overline{h_{110}^{k_1}(\theta)}.
	\end{array}
	\end{equation}
	 $\theta\in[-r,0]$, $\phi_1,\phi_2,\psi_1(0),\psi_2(0)$  see \cite[(2.8)] {JAS}, and $\eta_k\in BV([-r,0],\mathbb{C}^m)$ is denoted by \cite [(2.6)] {JAS}, that is
\begin{equation}\label{eq303}
-\mu_kD_0 \psi(0)+L_0\psi = \int_{-r}^0 \mathrm{d}\eta_k(\theta)\psi(\theta),\;\; ~\psi\in C\triangleq C([-r,0],\mathbb {C}^m),
\end{equation}
$k\in \mathbb{N}_0$.
\end{lemma}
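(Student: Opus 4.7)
The plan is to invoke the general normal form formulas for codimension-two Hopf-steady state bifurcation of delayed reaction-diffusion systems under Neumann boundary conditions derived in \cite{JAS}, and simply specialize them to the present setting $k_1 \neq 0$, $k_2=0$ with the quadratic and cubic nonlinearities $Q, C$ read off from (\ref{rdu2}). No new theoretical machinery is required; the content of the lemma is that the general formulas collapse to the displayed expressions once the orthogonality of the Fourier modes on $\Omega=(0,1)$ is exploited.

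First I would set up the decomposition. The eigenfunctions $\{\beta_k(x)\}_{k\in\mathbb{N}_0}$ of $-\Delta$ on $(0,1)$ with Neumann boundary conditions ($\beta_0\equiv 1$, $\beta_k=\sqrt{2}\cos(k\pi x)$ for $k\ge 1$) decompose $\mathcal{C}=C([-1,0];X)$ mode by mode. On each mode $k$ the linearized equation is governed by the characteristic equation $D_k(\lambda)=0$ of (\ref{eigen-11}); at the critical pair $(\tau_{k_2},\varepsilon_*)$ the mode $k_1$ contributes a simple zero with eigenvector $\phi_1$ (cf.\ Theorem~\ref{T23}) and the mode $k_2=0$ contributes the pair $\pm\mathrm{i}\omega_0$ with eigenvectors $\phi_2,\bar\phi_2$. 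Applying the formal adjoint construction via (\ref{eq303}) produces dual vectors $\psi_1(0),\psi_2(0)$ normalized by the standard biorthogonality conditions from \cite{JAS}, thereby yielding the projection $\mathcal{C}=P\oplus Q$ onto the three-dimensional center subspace $P$.

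Second, I would project (\ref{lin}) onto $P$ and apply Faria's normal form algorithm order by order. The linear-in-$\alpha$ terms give $a_1(\alpha),b_2(\alpha)$ by direct pairing of $(L_1(\alpha)-\mu_kD_1(\alpha))\phi_j$ with the dual basis, which is exactly the first two lines of (\ref{eq03-3++}). The quadratic coefficients $a_{11},a_{23},b_{12}$ require spatial integrals of the form $\int_0^1 \beta_{k_i}\beta_{k_j}\beta_{k_l}\,dx$. Since $k_2=0$ makes $\beta_{k_2}\equiv 1$ while $k_1\ne 0$, each of these three integrals reduces to $\int_0^1\cos(k_1\pi x)\,dx=0$; hence $a_{11}=a_{23}=b_{12}=0$. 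For the cubic resonant coefficients $a_{111},a_{123},b_{112},b_{223}$, Faria's procedure superposes a direct cubic contribution $\psi_j C_{\phi\phi\phi}$ with quadratic-quadratic contributions mediated by the second-order center-manifold corrections $h_{ijl}^k(\theta)$. These corrections solve inhomogeneous linear equations whose right-hand sides are $Q_{\phi_i\phi_j}$ projected onto the Fourier modes that actually appear; orthogonality forces only modes $k=0,k_1,2k_1$ to be present, matching the indices in (\ref{eq03-3}). The corrections themselves are then obtained by inverting $[\int_{-r}^0 d\eta_k(\theta)]^{-1}$ at $k=0,2k_1$ and the resolvents $[\mathrm{i}\omega_0 I-\int e^{\mathrm{i}\omega_0\theta}d\eta_k(\theta)]^{-1}$ at $k=0,k_1$ — these exist because the corresponding Fourier modes are off the critical spectrum.

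The main obstacle is essentially bookkeeping: carefully tracking which monomials survive the near-identity transformation as resonant (only $z_1^2, z_2\bar z_2, z_1^3, z_1 z_2\bar z_2$ in $\dot z_1$ and $z_1 z_2, z_1^2 z_2, z_2^2\bar z_2$ in $\dot z_2$, by the resonance relations for a $(0,\pm\mathrm{i}\omega_0)$ singularity), and combining the contributions from the center-manifold corrections $h_{200}^{0},h_{200}^{2k_1},h_{011}^{0},h_{011}^{2k_1},h_{020}^{0},h_{110}^{k_1},h_{101}^{k_1}$ in the correct symmetric combinations. Since the algebra is precisely that of \cite[Theorem~3.3]{JAS} with $k_2=0$, I would cite the theorem for the derivation and only verify the index specialization: $h_{011}^{2k_1}=0$ because the contraction $Q_{\phi_2\bar\phi_2}$ lives on mode $0$ while the projector onto mode $2k_1$ annihilates it, and $h_{002}^0$, $h_{101}^{k_1}$ are fixed as complex conjugates of $h_{020}^0,h_{110}^{k_1}$ by the reality of the original system. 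This completes the verification.
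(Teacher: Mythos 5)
Your proposal is correct and takes essentially the same route as the paper, which states this lemma as a direct specialization of the general normal-form formulas of \cite{JAS} and offers no independent proof. Your sketch of Faria's reduction and the Fourier-orthogonality argument accurately reflects the derivation in that reference (one small imprecision: for $a_{11}$ the relevant spatial integral is $\int_0^1\cos^3(k_1\pi x)\,dx$ rather than $\int_0^1\cos(k_1\pi x)\,dx$, though both vanish for $k_1\neq 0$, so the conclusion stands).
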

Now, we are going to calculate coefficients in the third-order normal form (\ref{third}) by explicit formulas (\ref{eq03-3++}) and (\ref{eq03-3}).

\noindent{\bf(3-1)} To get expressions of $\phi_1,\phi_2,\psi_1(0),\psi_2(0)$, and further to get the expression of $a_1(\alpha),b_2(\alpha)$  in (\ref{third}) and $Q{\phi_i\phi_j}$, $C{\phi_i\phi_j\phi_l},\ i,j,l=1,2$.

By \cite[(2.8)] {JAS}, and noticing $\omega_0=\tau_{k_2}\omega_{k_2}^+$, we have

$$
\phi_1(\theta ) =\left(
\begin{array}{c}
1 \\
p_1^0
\end{array}\right),~~
\phi_2(\theta ) =\left(
\begin{array}{c}
1 \\
p_2^0
\end{array}\right)e^{\texttt{i}\tau_{k_2}\omega_{k_2}^+ \theta},~~-1\leq \theta \leq 0
$$
and
$$
\psi_1(0 ) =\frac{1}{N_1}\left(
1 ,
q_1^0
\right),~~
\psi_2(0) =\frac{1}{N_2}\left(
1 ,
q_2^0
\right),
$$
where
\begin{equation}\label{pqN}
\begin{array}{rl}
q_1^0=&-\frac{1}{2u_*v_*}(1-2u_*v_*-d\varepsilon_*\mu_{k_1}),\\
p_1^0=&\frac{1}{u_*^2}(1-2u_*v_*-d\varepsilon_*\mu_{k_1}),\\
q_2^0=&-\frac{1}{2u_*v_*}(1-2u_*v_*e^{-\mathrm{i}\tau_{k_2}\omega_{k_2}^+}-d\varepsilon_*\mu_{k_2}+\mathrm{i}\omega_{k_2}^+)e^{\mathrm{i}\tau_{k_2}\omega_{k_2}^+},\\
p_2^0=&\frac{1}{u_*^2}(1-2u_*v_*e^{-\mathrm{i}\tau_{k_2}\omega_{k_2}^+}-d\varepsilon_*\mu_{k_2}+\mathrm{i}\omega_{k_2}^+)e^{\mathrm{i}\tau_{k_2}\omega_{k_2}^+},\\
N_1=&1+p_1^0q_1^0+\tau_{k_2}
u_*(2v_*+u_*p_1^0)(1-q_1^0),\\
N_2=&1+p_2^0q_2^0+\tau_{k_2}
u_*(2v_*+u_*p_2^0)(1-q_2^0)e^{-\mathrm{i}\tau_{k_2}\omega_{k_2}^+}.
\end{array}
\end{equation}
So, we have
\begin{equation}\label{a1}
\begin{array}{rlc}
a_{1}(\alpha)%=&\frac{1}{2}\psi_1 (0)(L_1(\alpha)\phi_1-\mu_{k_1}D_1(\alpha)\phi_1(0)),\\
=&\frac{1}{N_1}[-1+u_*(2v_*+u_*p_1^0)(1-q_1^0)-dk_1^2\pi^2(\varepsilon_*+p_1^0q_1^0)]\alpha_1-
\frac{1}{N_1}dk_1^2\pi^2\tau_{k_2}\alpha_2,\\

b_{2}(\alpha)%=&\frac{1}{2}\psi_2 (0)(L_1(\alpha)\phi_2-\mu_{k_2}D_1(\alpha)\phi_2(0)),\\
=&\frac{1}{N_2}[-1+u_*(2v_*+u_*p_2^0)(1-q_2^0)e^{-\texttt{i}\tau_{k_2}\omega_{k_2}^+}-dk_2^2\pi^2(\varepsilon_*+p_2^0q_2^0)]\alpha_1-\frac{1}{N_2}dk_2^2\pi^2\tau_{k_2}\alpha_2.
\end{array}
\end{equation}
%{\bf(3-2)} To get the expression of $L_1(\alpha),D_1(\alpha),Q_{X,Y},C_{XYZ}$, and further to get the expression of $a_{11}, a_{23}, b_{12}$  in (\ref{eq443}).
\begin{equation}\label{Q}
\begin{array}{rl}
Q_{\phi_1\phi_1}=& 2\tau_{k_2}(v_*+2u_*p_1^0)(
1,\ -1)^{T},\\

Q_{\phi_1\phi_2}=& 2\tau_{k_2}e^{-\texttt{i}\tau_{k_2}\omega_{k_2}^+ }[v_*+u_*(p_1^0+p_2^0)](
1,\ -1)^{T},\\

Q_{\phi_1\bar{\phi}_2}=& 2\tau_{k_2}e^{\texttt{i}\tau_{k_2}\omega_{k_2}^+ }[v_*+u_*(p_1^0+\overline{p_2^0})](
1,\ -1)^{T},\\

Q_{\phi_2\phi_2}=& 2\tau_{k_2}e^{-2\texttt{i}\tau_{k_2}\omega_{k_2}^+ }(v_*+2u_*p_2^0)(
1,\ -1)^{T},\\

Q_{\phi_2\bar{\phi}_2}=& 2\tau_{k_2}(v_*+u_*(p_2^0+\overline{p_2^0}))(
1,\ -1)^{T},\\

Q_{\bar{\phi}_2\bar{\phi}_2}=& 2\tau_{k_2}e^{2\texttt{i}\tau_{k_2}\omega_{k_2}^+ }(v_*+2u_*\overline{p_2^0})(
1,\ -1)^{T}
\end{array}
\end{equation}
and
\begin{equation}\label{C}
\begin{array}{rl}
C_{\phi_1\phi_1\phi_1}=& 6\tau_{k_2}p_1^0(
1,\ -1)^{T},\\

C_{\phi_1\phi_2\bar{\phi}_2}=& 2\tau_{k_2}(p_1^0+p_2^0+\overline{p_2^0})](
1,\ -1)^{T},\\

C_{\phi_1\phi_1\phi_2}=& 2\tau_{k_2}e^{-\texttt{i}\tau_{k_2}\omega_{k_2}^+ }(2p_1^0+p_2^0)(
1,\ -1)^{T},\\

C_{\phi_2\phi_2\bar{\phi}_2}=& 2\tau_{k_2} (2p_2^0+\overline{p_2^0}) e^{-\texttt{i}\tau_{k_2}\omega_{k_2}^+ }(
1,\ -1)^{T}.
\end{array}
\end{equation}

\noindent{\bf(3-2)} To calculate expressions of $h_{200}^0,~h_{200}^{2k_1},~h_{011}^0,~h_{011}^{2k_1},~h_{110}^{k_1},~h_{101}^{k_1},~h_{020}^{0}$ by(\ref{eq03-3}) and (\ref{eq303}).

\begin{equation}\label{eq03-3-h}
\begin{array}{rlc}
h_{200}^0(\theta)%=&-\frac{1}{2}[\int_{-1}^0\mathrm{d}\eta _0(\theta)]^{-1}Q_{\phi_1\phi_1}+\frac{1}{2\texttt{i}\tau_{k_2}\omega_{k_2}^+}(\phi_2(\theta)\psi_2(0)-\bar{\phi}_2(\theta)\bar{\psi}_2(0))]Q_{\phi_1\phi_1}\\
                 =&(v_*-2u_*p_1^0)\left[\frac{1}{u_*^2}\left(
\begin{array}{c}
0 \\
-1
\end{array}\right)+\frac{2}{\omega_{k_2}^+}\left(
\begin{array}{c}
\mathrm{Re}(\frac{1-q_2^0}{\texttt{i}N_2}e^{\texttt{i}\tau_{k_2}\omega_{k_2}^+ \theta}) \\
\mathrm{Re}(\frac{(1-q_2^0)p_2^0}{\texttt{i}N_2}e^{\texttt{i}\tau_{k_2}\omega_{k_2}^+ \theta})
\end{array}\right)\right],\\

h_{200}^{2k_1}(\theta)%= &-\frac{1}{2\sqrt{2}}[\int_{-1}^0\mathrm{d}\eta _{2k_1}(\theta)]^{-1}Q_{\phi_1\phi_1}\\
                            =&-\frac{1}{\sqrt{2}}(v_*+2u_*p_1^0)\frac{1}{q_{2k_1}(\varepsilon_*)+r_{2k_1}(\varepsilon_*)}\left(
\begin{array}{c}
-(2k_1)^2\pi^2d \\
1+(2k_1)^2\pi^2d\varepsilon_*
\end{array}\right)\\

h_{011}^0(\theta)%=&-[\int_{-1}^0\mathrm{d}\eta _0(\theta)]^{-1}Q_{\phi_2\bar{\phi}_2}+\frac{1}{\texttt{i}\tau_{k_2}\omega_{k_2}^+}(\phi_2(\theta)\psi_2(0)-\bar{\phi}_2(\theta)\bar{\psi}_2(0))]Q_{\phi_2\bar{\phi}_2}\\
                 =&2[v_*+u_*(p_2^0+\overline{p_2^0})]\left[\frac{1}{u_*^2}\left(
\begin{array}{c}
0 \\
-1
\end{array}\right)+\frac{2}{\omega_{k_2}^+}\left(
\begin{array}{c}
\mathrm{Re}(\frac{1-q_2^0}{\texttt{i}N_2}e^{\texttt{i}\tau_{k_2}\omega_{k_2}^+ \theta}) \\
\mathrm{Re}(\frac{(1-q_2^0)p_2^0}{\texttt{i}N_2}e^{\texttt{i}\tau_{k_2}\omega_{k_2}^+ \theta})
\end{array}\right)\right],\\
h_{011}^{2k_1}(\theta)=&0,\\

h_{020}^0(\theta)%=&\frac{1}{2}[2\texttt{i}\tau_{k_2}\omega_{k_2}^+I-\int_{-1}^0e^{2\texttt{i}\tau_{k_2}\omega_{k_2}^+\theta}\mathrm{d}\eta _{0}(\theta)]^{-1}Q_{\phi_2\phi_2}e^{2\texttt{i}\tau_{k_2}\omega_{k_2}^+\theta}-\\
                  %&-\frac{1}{2\texttt{i}\tau_{k_2}\omega_{k_2}^+}[\phi_2(\theta)\psi_2(0)+\frac{1}{3}\bar{\phi}_2(\theta)\bar{\psi}_2(0)]Q_{\phi_2\phi_2}\\
                 =&(v_*+2u_*p_1^0)\frac{e^{\texttt{i}\tau_{k_2}\omega_{k_2}^+ (\theta-2)}}{D_0(2\texttt{i}\omega_{k_2}^+,\tau_{k_2},\varepsilon_*)}\left(
\begin{array}{c}
2\texttt{i}\omega_{k_2}^+ \\
-2\texttt{i}\omega_{k_2}^+ -1
\end{array}\right)\\
                   &-\frac{(v_*+2u_*p_1^0)}{\texttt{i}\omega_{k_2}^+}(e^{\texttt{i}\tau_{k_2}\omega_{k_2}^+(\theta-2)}\frac{(1-q_2^0)}{N_2}\left(
\begin{array}{c}
1 \\
p_2^0
\end{array}\right)+\frac{1}{3} e^{-\texttt{i}\tau_{k_2}\omega_{k_2}^+ (\theta+2)}\frac{(1-\overline{q_2^0})}{\overline{N_2}}\left(
\begin{array}{c}
1 \\
\overline{p_2^0}
\end{array}\right))\\

h_{002}^0(\theta)=&\overline{h_{020}^0(\theta)},\\

h_{110}^{k_1}(\theta)%=&[\texttt{i}\tau_{k_2}\omega_{k_2}^+I-\int_{-1}^0e^{\texttt{i}\tau_{k_2}\omega_{k_2}^+\theta}\mathrm{d}\eta _{k_1}(\theta)]^{-1}Q_{\phi_1\phi_2}e^{\texttt{i}\tau_{k_2}\omega_{k_2}^+\theta}-\frac{1}{\texttt{i}\tau_{k_2}\omega_{k_2}^+}\phi_1(0)\psi_1(0)Q_{\phi_1\phi_2},\\
                     =&2[v_*+u_*(p_1^0+p_2^0)]e^{\texttt{i}\tau_{k_2}\omega_{k_2}^+ (\theta-1)}\frac{1}{
                     D_{k_1}(\texttt{i}\omega_{k_2}^+,\tau_{k_2},\varepsilon_*)}\left(
\begin{array}{c}
\texttt{i}\omega_{k_2}^++d k_1^2\pi^2 \\
-\texttt{i}\omega_{k_2}^+-\varepsilon_*d k_1^2\pi^2-1
\end{array}\right)\\
                   &-2[v_*+u_*(p_1^0+p_2^0)]e^{-\texttt{i}\tau_{k_2}\omega_{k_2}^+}\frac{(1-q_1^0)}{\texttt{i}\omega_{k_2}^+N_1}\left(
\begin{array}{c}
1 \\
p_1^0
\end{array}\right),\\

h_{101}^{k_1}(\theta)=&\overline{h_{110}^{k_1}(\theta)}.
\end{array}
\end{equation}
Plugging \eqref{Q},\eqref{C} and \eqref{eq03-3-h} into \eqref{eq03-3++}, expressions of $a_{111}, a_{123}, b_{123}, b_{223}$ are obtained.  Further, we have following result by \cite{JAS}.

 \begin{theorem}\label{amplitude}
 Suppose that ${\bf (N_0)}$ and ${\bf (N_3)}$ hold, and $d\in (d_{k_1,k_1+1},d_{k_1-1,k_1}),~k_1\in \mathbb{N}$, $k_2=0$. Then  Turing-Hopf bifurcation with Hopf-pitchfork type  occurs for
(\ref{3.1b}) with $\tau=\tau_{k_2},~\varepsilon=\varepsilon_*$ when $a_{111},~a_{123},~\mathrm{Re}b_{112},~\mathrm{Re}b_{223}\neq 0$, and $a_{111}\mathrm{Re}b_{223}-a_{123}\mathrm{Re}b_{112}\neq 0$. Moreover, the simplified planar system, which corresponds to normal form \eqref{third}, is

\begin{equation}\label{eq473-2}
\begin{array}{rlc}
\dot{r}=& r(\varepsilon_1(\alpha)+r^2+b_0z^2),\\
\dot{z}=& z(\varepsilon_2(\alpha)+c_0r^2+d_0z^2),
\end{array}
\end{equation}
where
$\varepsilon_1(\alpha)=\mathrm{Re}b_2(\alpha)\mathrm{sign}(\mathrm{Re}b_{223}),
~\varepsilon_2(\alpha)=a_1(\alpha)\mathrm{sign}(\mathrm{Re}b_{223})~
,~b_0=
\frac{\mathrm{Re}b_{112}}{|a_{111}|}\mathrm{sign}(\mathrm{Re}b_{223}),
~c_0=\frac{a_{123}}{|\mathrm{Re}b_{223}|}\mathrm{sign}(\mathrm{Re}b_{223}),
~d_0=\mathrm{sign}(a_{111}\mathrm{Re}b_{223})$.
 \end{theorem}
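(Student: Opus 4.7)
My plan is to reduce the truncated normal form \eqref{third} to the planar amplitude system \eqref{eq473-2} by polar decomposition of the Hopf mode followed by a joint rescaling of amplitudes and time. Write $z_2=re^{\mathrm{i}\theta}$, $\bar z_2=re^{-\mathrm{i}\theta}$ with $r\ge 0$. Since the quadratic coefficients vanish in \eqref{third} ($a_{11}=a_{23}=b_{12}=0$ by Lemma \ref{L1}) and Poincar\'e-Dulac normalization has already left only the resonant monomials $z_1^{m}(z_2\bar z_2)^{n}$ in $\dot z_1$ and $z_1^{m}z_2(z_2\bar z_2)^{n}$ in $\dot z_2$, the phase $\theta$ decouples from $(z_1,r)$ through cubic order. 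Writing $z:=z_1$ (which is real on the center manifold because $\phi_1$ is real) and taking real and imaginary parts of the $z_2$-equation yields
\[
\dot z=a_{1}(\alpha)z+a_{111}z^{3}+a_{123}z r^{2}+O(4),\qquad
\dot r=\mathrm{Re}\,b_{2}(\alpha)\,r+\mathrm{Re}\,b_{112}\,z^{2}r+\mathrm{Re}\,b_{223}\,r^{3}+O(4),
\]
together with $\dot\theta=\omega_{0}+O(1)$, which only drives the phase and is irrelevant to the $(z,r)$-bifurcation diagram.

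Next I would apply the linear rescaling $r=|\mathrm{Re}\,b_{223}|^{-1/2}\tilde r$, $z=|a_{111}|^{-1/2}\tilde z$, and $t=\mathrm{sign}(\mathrm{Re}\,b_{223})\,T$, the last of which embeds a time reversal whenever $\mathrm{Re}\,b_{223}<0$. Direct substitution normalizes the coefficient of $\tilde r^{3}$ to $+1$ and, upon collection of like terms, produces exactly the expressions for $d_{0}$, $b_{0}$, $c_{0}$, $\varepsilon_{1}(\alpha)$, $\varepsilon_{2}(\alpha)$ stated in the theorem. The hypotheses $a_{111}\ne 0$ and $\mathrm{Re}\,b_{223}\ne 0$ are precisely what makes the rescaling admissible, while $a_{111}\mathrm{Re}\,b_{223}-a_{123}\mathrm{Re}\,b_{112}\ne 0$ is equivalent to $1-b_{0}c_{0}\ne 0$, the standard nondegeneracy condition separating the twelve generic unfoldings of the Hopf-pitchfork (Hopf-steady state with $\mathbb{Z}_{2}$ symmetry) singularity classified in \cite{JAS}.

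All of the hard computational content has already been carried out in the preceding Subsections (3-1) and (3-2): the dependence of $a_{1}(\alpha),b_{2}(\alpha),a_{111},a_{123},\mathrm{Re}\,b_{112},\mathrm{Re}\,b_{223}$ on the original parameters $(a,b,d,\varepsilon,\tau)$ comes from substituting \eqref{pqN}--\eqref{eq03-3-h} into \eqref{eq03-3++} of Lemma \ref{L1}, so the remaining work is a transparent substitution-and-collection.

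I expect the only real pitfall to be the sign bookkeeping when $\mathrm{Re}\,b_{223}<0$: the time reversal $t\mapsto -T$ flips the sign of every linear and cubic term on the right-hand side, and one must verify that the common factor $\mathrm{sign}(\mathrm{Re}\,b_{223})$ appearing in $\varepsilon_{1},\varepsilon_{2},b_{0},c_{0}$ correctly absorbs this flip while $d_{0}=\mathrm{sign}(a_{111}\mathrm{Re}\,b_{223})$ captures only the residual relative sign between the two cubic directions. This case check across the four sign combinations of $(a_{111},\mathrm{Re}\,b_{223})$ is, in practice, the sole place where a careless calculation could introduce an error.
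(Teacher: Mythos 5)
Your reduction is correct and is essentially the argument the paper relies on: the paper gives no proof of this theorem but simply invokes the cylindrical-coordinate amplitude reduction and rescaling from \cite{JAS} (with the twelve unfoldings taken from \cite[\S 7.5]{Guck1983}), and your polar decomposition, phase decoupling, and rescaling $r\mapsto|\mathrm{Re}\,b_{223}|^{-1/2}\tilde r$, $z\mapsto|a_{111}|^{-1/2}\tilde z$, $t\mapsto\mathrm{sign}(\mathrm{Re}\,b_{223})T$ reproduce exactly the stated $\varepsilon_1,\varepsilon_2,b_0,c_0,d_0$. One bookkeeping slip of precisely the kind you warned against: $a_{111}\mathrm{Re}\,b_{223}-a_{123}\mathrm{Re}\,b_{112}\neq 0$ is equivalent to $d_0-b_0c_0\neq 0$ (since $d_0-b_0c_0=\bigl(a_{111}\mathrm{Re}\,b_{223}-a_{123}\mathrm{Re}\,b_{112}\bigr)/\bigl(|a_{111}|\,|\mathrm{Re}\,b_{223}|\bigr)$), not to $1-b_0c_0\neq 0$, which is only the same thing when $d_0=+1$; this is the quantity listed in Table \ref{tab1}.
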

Based on \cite [\S7.5] {Guck1983}, by different signs of
$b_0,c_0,d_0,d_0-b_0c_0$ in Table 1, Eq.(\ref{eq473-2}) has twelve distinct types of unfoldings, which are twelve
  essentially distinct types of phase portraits and bifurcation diagrams. With the help of analysis in \cite [Section 4] {AJ}, the results  in \cite{Guck1983} can be directly applied to analyzing the
equation (\ref{eq473-2}). %For convenience the application of the example in Section 5, we show the bifurcation set and phase portraits of Case $\mathrm{III}$ in Figure \ref{fig1}.

\begin{table}
\begin{center}
\begin{tabular}{|c|cccccccccccc|}
\hline
~~~case~~~ &$\mathrm{I}a$ &$ \mathrm{I}b$ & $\mathrm{II} $
&  $ \mathrm{III}$ &$ \mathrm{IV}a$&$ \mathrm{IV}b$& $ \mathrm{V}$
&$ \mathrm{VI}a$ &  $ \mathrm{VI}b$ & $ \mathrm{VII}a$ & $ \mathrm{VII}b$ &$ \mathrm{VIII}$\\
\hline $d_0$ & $+1$ & $+1$ & $+1$ & $+1$ & $+1$ & $+1$ & $-1$& $-1$& $-1$& $-1$& $-1$& $-1$\\
 $ b_0$ & $+$ & $+$& $+$& $-$& $-$& $-$& $+$& $+$& $+$& $-$ & $-$& $-$\\
$c_0$ & $+$& $+$& $-$& $+$& $-$& $-$& $+$& $-$& $-$& $+$& $+$& $-$\\
$ d_0-b_0c_0$ & $+$& $-$& $+$& $+$& $+$& $-$& $-$& $+$& $-$& $+$& $-$& $-$\\
%\hline $ \mathrm{Hopf}$ & &&&&&&\mathrm{occurs}&\mathrm{occurs}&&&\\
\hline
\end{tabular}
\end{center}
\caption{The twelve unfoldings of \eqref{eq473-2}, see \cite {Guck1983}}\label{tab1}
\end{table}

%{\bf ~~~~ Table 1 }The Twelve Unfoldings \cite {Guck1983}.
%
%\begin{tabular}{ccccccccccccc}
%\hline ~~~case~~~ &$\mathrm{I}a$ &$ \mathrm{I}b$ & $\mathrm{II} $
%&  $ \mathrm{III}$ &$ \mathrm{IV}a$&$ \mathrm{IV}b$& $ \mathrm{V}$
%&$ \mathrm{VI}a$ &  $ \mathrm{VI}b$ & $ \mathrm{VII}a$ & $ \mathrm{VII}b$ &$ \mathrm{VIII}$\\
%\hline $d_0$ & $+1$ & $+1$ & $+1$ & $+1$ & $+1$ & $+1$ & $-1$& $-1$& $-1$& $-1$& $-1$& $-1$\\
% $ b_0$ & $+$ & $+$& $+$& $-$& $-$& $-$& $+$& $+$& $+$& $-$ & $-$& $-$\\
%$c_0$ & $+$& $+$& $-$& $+$& $-$& $-$& $+$& $-$& $-$& $+$& $+$& $-$\\
%$ d_0-b_0c_0$ & $+$& $-$& $+$& $+$& $+$& $-$& $-$& $+$& $-$& $+$& $-$& $-$\\
%%\hline $ \mathrm{Hopf}$ & &&&&&&\mathrm{occurs}&\mathrm{occurs}&&&\\
%\hline
%\end{tabular}
%$$$$
%\noindent
%{\bf  Remark 7.}

By the corresponding bifurcation diagrams and phase portraits of \eqref{eq473-2}, we can answer following questions:
\begin{enumerate}
    \item On which side of the Turing bifurcation critical value does purely spatially periodic pattern (that is spatially inhomogeneous steady state solutions) appear? Is it stable?

    \item On which side of the Hopf bifurcation critical value  does temporally periodic pattern (that is spatially homogeneous or inhomogeneous periodic orbits ) appear? Is it stable?

    \item What kind of  mixed spatiotemporal periodic patterns will emerge, owing to the mode interaction between Turing and Hopf bifurcations?

\end{enumerate}

In next section, we will answer these three questions under the given system parameters.
%By Theorem \ref{amplitude}
%%%
%
%%%
\section{Spatiotemporal patterns with Turing-Hopf bifurcation}
%In this section, we present some numerical analysis to illustrate
%our theoretical analysis.

The model \eqref{3.1b} has five parameters: $a,\ b,\ d,\ \varepsilon,\
\tau$.
%%%%%%%%%%%%%%%%%%%%%%%%%%%%%%%%%%%%%%%%%%%%%%%%%%%%%%%%%%%%%%%%%%%%%%%%%%%%%%%%%%%%%%%
%%
%%    Example 2
%%
%%%%%%%%%%%%%%%%%%%%%%%%%%%%%%%%%%%%%%%%%%%%%%%%%%%%%%%%%%%%%%%%%%%%%%%%%%%%%%%%%%%%%%%
%{\bf Example 1.}
We choose parameters:
\begin{align}
&a=0.1,\ b=0.9.\label{11}
\end{align}
The equilibrium point is
$(u_*,v_*)=(1,0.9)$, and condition
%\begin{enumerate}
%\item[${\bf (N_0)}$] $u_*^2>2u_*v_*-1>0$
%\end{enumerate}
$${\bf (N_0)}\quad u_*^2>2u_*v_*-1>0.$$
is satisfied. By ${\bf (N_1)}$, ${\bf (N_2)}$, \eqref{eB} and \eqref{e0} we calculate that $\varepsilon_1= 0.1167$,
$$
\varepsilon_B(d)=\left\{\begin{array}{lll} &0.1167, &\mathrm{if} ~0<d\leq 0.5931,\\
&\frac{4}{5(\pi^2d+1)}, &\mathrm{if} ~~~~~~~d\geq 0.5931,
\end{array}\right.
$$
and
$$
\varepsilon_*=\varepsilon_*(k_1,d)=\frac{4dk_1^2\pi^2-5}{5dk_1^2\pi^2(dk_1^2\pi^2+1)},~~d\in (d_{k_1,k_1+1},~d_{k_1-1,k_1}),k_1\in \mathbb{N}.$$
In the following, we consider different values of wave number $k_1$ to reveal spatiotemporal patterns with different spatial frequencies.
\begin{figure}[htbp]
	\centering
	\includegraphics[scale=0.62]{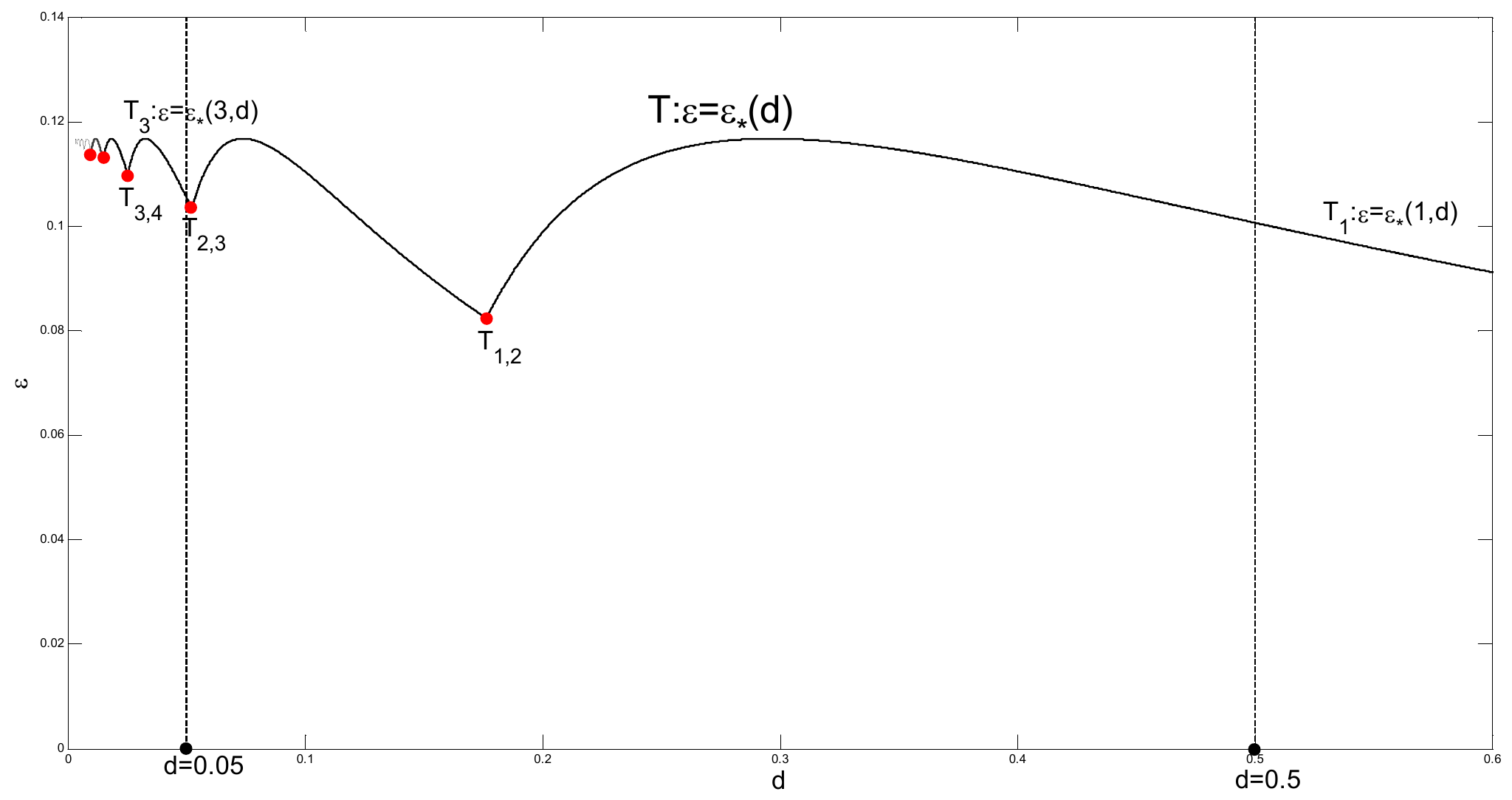}
	\caption{The line $d=0.5$ and $d=0.05$ intersect with Turing bifurcation line $T$ at $\varepsilon_*(1,0.5)=0.1007$ and $\varepsilon_*(3,0.05)=0.1056$, respectively.
	}
	\label{fig33}
\end{figure}

\begin{example}\label{exm:1}
Take $k_1=1$, then $d_{1,2}=0.1765$ by \eqref{dkk+}. Choose $d=0.5\in (d_{1,2},~+\infty)$, thus $\varepsilon_*=\varepsilon_*(1,0.5)=0.1007$. So, system (\ref{3.1b}) with $d=0.5$ undergoes $1-$mode Turing bifurcation near equilibrium $(1,0.9)$ at $\varepsilon=0.1007$. See Figure \ref{fig33}.

By $(u_*^2-2u_*v_*)^2(\varepsilon^2+1)<1$ for $\varepsilon>0.1007 $, we have $K^*=K^0$.
Furthermore, by (\ref{k0}), we obtain $K^0:=K^0(\varepsilon^*)=0.2721$. According to the first item of Theorem 2, we assert that equation (\ref{omegak}) has a positive root $\omega_k^+$ when $0\leq k<0.2721$.  %And $K_*\in (0.3133, 0.4387)$ in (a) of the second item  of the theorem 2, so there is no positive integer in the interval $[K^0,K_*]$.
From Assumption ${\bf (N3)}$, we know that
$$k_2=0.$$
 By (\ref{omegakk}) and (\ref{imagi}), we obtain
$$\omega_0^+=0.9144,\ \tau_0=0.2171$$
Thus, by Theorem \ref{thm:2.12} and \ref{thm:2.15} we conclude that
\begin{corollary}
For parameters $a=0.1,\ b=0.9,\ d=0.5$, we have
\begin{enumerate}
%%\item[(1)] When $\tau=0.2171,\ \varepsilon=0.1007$,  $D_0(\lambda)$ has a pair of purely imaginary roots $\pm 0.9144 \mathrm{i}$, $D_1(\lambda)$ has a simple zero root, with all other roots  of $D_{k}(\lambda)$ have negative real parts $k\in\mathbb{N}_0$.
\item [(1)]
 System (\ref {3.1b}) undergoes $(1,0)-$mode Turing-Hopf bifurcation near equilibrium $(u,v)=(1,0.9)$ at $\tau=0.2171,\ \varepsilon=0.1007$.% with $0\leq k<K^*$ and $j, k\in \mathbb{N}_0$.
\item [(2)] The equilibrium $(u,v)=(1,0.9)$ is asymptotically stable in system (\ref{3.1b}) with $\tau\in [0,0.2171)$ for $ \varepsilon >0.1007$, and unstable for $0<\varepsilon<0.1007$.%
    %$\varepsilon>0.0022$, and unstable for $0<\varepsilon<0.0022$ or $\tau > 0.2014$.
\end{enumerate}
\end{corollary}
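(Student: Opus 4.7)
The plan is to verify that the chosen parameters $a=0.1,b=0.9,d=0.5$ fit directly into the hypotheses of Theorem \ref{thm:2.15}, and then to read off the numerical Turing and Hopf critical values from the formulas derived earlier in the section. The proof is essentially a case of plugging numbers into previously established machinery; there is no new analytic content, so my proposal is mainly a bookkeeping route through the earlier lemmas.

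First I would confirm the standing hypothesis ${\bf (N_0)}$. With $u_*=a+b=1$ and $v_*=b/(a+b)^2=0.9$, one has $u_*^2=1$ and $2u_*v_*-1=0.8$, giving $u_*^2>2u_*v_*-1>0$. Next, using the closed form \eqref{dkk+} with $k=1$, I would compute $d_{1,2}=0.1765$ and observe $0.5\in(d_{1,2},d_{0,1})=(0.1765,+\infty)$, so Lemma \ref{l22}(d) applies with $k_1=1$. Plugging $k_1=1$, $d=0.5$ into \eqref{e0} yields $\varepsilon_*=\varepsilon_*(1,0.5)=0.1007$. By Theorem \ref{T23}(1), this places system \eqref{3.1b} on the first Turing curve at a $1$-mode Turing bifurcation, and assertion (2) on instability for $0<\varepsilon<\varepsilon_*$ is immediate from Theorem \ref{T23}(2)(b).

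For the Hopf side I would evaluate the discriminant in Theorem \ref{thm:2.9}. At $\varepsilon=\varepsilon_*=0.1007$ one checks $(u_*^2-2u_*v_*)^2(\varepsilon_*^2+1)<1$, placing us in case 2(a), so $K^*=K^0$ and from \eqref{k0}, $K^0(\varepsilon_*)=0.2721$. Thus the only integer mode $k\in\mathbb{N}_0$ with $0\le k<K^*$ is $k=0$, so ${\bf (N_3)}$ holds automatically with $k_2=0$ (no comparison of $\tau_k$'s among different modes is needed). Then I would insert $k=0$ into \eqref{omegakk} to obtain $\omega_0^+=0.9144$, and into \eqref{imagi} (with principal branch in $(0,2\pi]$) to get $\tau_0=0.2171$.

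Having established the two critical values, applying Theorem \ref{thm:2.15} directly yields the $(1,0)$-mode Turing-Hopf bifurcation at $(\tau,\varepsilon)=(0.2171,0.1007)$, which is claim (1). Asymptotic stability of $(u_*,v_*)$ for $\tau\in[0,\tau_0)$ when $\varepsilon>\varepsilon_*$ then follows from Theorem \ref{thm:2.12}(2) together with Theorem \ref{thm:2.15}(2), completing claim (2). The only mildly delicate point I anticipate is ensuring the branch choice in \eqref{imagi} yields exactly the quoted $\tau_0=0.2171$ rather than a shifted value $\tau_0+2\pi j/\omega_0^+$; this is settled by taking the minimal positive root, as prescribed in the definition preceding \eqref{tau}. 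Otherwise the corollary is a direct numerical instantiation of the general theorems already proved.
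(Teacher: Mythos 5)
Your proposal is correct and follows essentially the same route as the paper: verify $\mathbf{(N_0)}$, locate $d=0.5$ in $(d_{1,2},+\infty)$ to get $\varepsilon_*(1,0.5)=0.1007$ from \eqref{e0}, use the discriminant condition $(u_*^2-2u_*v_*)^2(\varepsilon^2+1)<1$ to conclude $K^*=K^0=0.2721$ so that $k_2=0$ is the only admissible Hopf mode, compute $\omega_0^+=0.9144$ and $\tau_0=0.2171$ from \eqref{omegakk}--\eqref{imagi}, and invoke Theorems \ref{thm:2.12} and \ref{thm:2.15}. Your explicit remarks that $\mathbf{(N_3)}$ holds vacuously (only $k=0$ lies below $K^*$) and that the principal branch of \eqref{imagi} must be taken are slightly more careful than the paper's exposition but do not constitute a different argument.
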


Now, let's calculate the normal form. From (\ref {pqN}) and (\ref {cf}), we have
$$q_1^0=0.16837,~~q_2^0=0.55554-0.60759\mathrm{i},
~~p_1^0=-0.30307,~~p_2^0=-0.99997 + 1.0937\mathrm{i},$$
$$N_1=1.2192,~~N_2=1.0848 + 1.4353\mathrm{i},$$
$$r_{k_2}=0,~~q_{k_2}=1,~~p_{k_2}=1,$$
$$r_{k_1}=7.3859,~~q_{k_1}=-7.3859,~~p_{k_1}=6.4359,$$
$$r_{2k_1}=58.9574,~~q_{2k_1}=-32.5438,~~p_{2k_1}=22.7260.$$
By (\ref{eq03-3-h}), we obtain that
$$
\begin{array}{rll}
h_{200}^0(0)=&\left(\begin{array}{c}0.0084241\\-0.0079706
\end{array}
\right),\quad h_{200}(-1)&=\left(\begin{array}{c}-0.097808\\0.088523
\end{array}
\right)
\end{array},
$$
$$
\begin{array}{rl}
h_{200}^2(0)=&h_{200}^2(-1)=\left(\begin{array}{c}0.31035\\-0.046975
\end{array}
\right)
\end{array},
$$
$$
%\begin{array}{rll}
h_{011}^0(0)=\left(\begin{array}{c}-0.031531\\0.029834
\end{array}
\right),\quad
h_{011}^0(-1)=\left(\begin{array}{c}0.36609\\-0.33134
)
\end{array}\right),
$$
$$
h_{020}^0(0)=\left(\begin{array}{c}0.0020659 + 0.069984\mathrm{i}\\-0.0021214 - 0.066282\mathrm{i}
\end{array}
\right),\quad
h_{020}^0(-1)=\left(\begin{array}{c}-0.08515 - 0.80808\mathrm{i}\\0.082398 + 0.73077\mathrm{i}
\end{array}
\right),
$$
$$
h_{110}^1(0)=\left(\begin{array}{c}-0.0043069 + 0.10203\mathrm{i}\\0.0073407 - 0.25347\mathrm{i}
\end{array}
\right),\quad
h_{110}^1(-1)=\left(\begin{array}{c}0.03495 -0.24323 \mathrm{i}\\-0.048564 - 0.14565\mathrm{i}
\end{array}
\right),
$$
$$
h_{101}^1=\overline{h_{110}^1},\quad h_{002}^0=\overline{h_{020}^0},\quad h_{011}^2=0.
$$
Substituting above parameter values into expression (\ref {a1}), (\ref {eq03-3++}), %(\ref {ab123}),%\ref {eqa111}), (\ref {eqa123}), (\ref {eqb112}) and (\ref {eqb223}),
the coefficients of normal form (\ref{third}) are obtained,
\begin{equation}\label{z123}
\begin{aligned}
&a_1(\alpha)=-0.00018873\alpha_1-0.8787\alpha_2,\\
&b_2(\alpha)=(0.07723 + 0.83252 \mathrm{i})\alpha_1,\\
&a_{11}=a_{23}=b_{12}=0,\\
&a_{111}= -0.1399,~~~~~~~b_{112}=-0.0906 + 0.0967\mathrm{i}, \\
&a_{123}=-0.1966,~~~~~~~b_{223}= -0.1675 - 0.0489\mathrm{i}.
\end{aligned}
\end{equation}
Thus, in corresponding planar system (\ref{eq473-2})
\begin{equation}\label{eq473-3}
\begin{array}{rlc}
\dot{r}=& r(\varepsilon_1(\alpha)+r^2+b_0z^2),\\
\dot{z}=& z(\varepsilon_2(\alpha)+c_0r^2+d_0z^2),
\end{array}
\end{equation}
we obtain that $\varepsilon_1(\alpha)=-0.07723 \alpha_1$, $\varepsilon_2(\alpha)=0.00018873\alpha_1+0.8787\alpha_2$, $b_0=0.6476,~c_0=1.1737,~d_0=1,~d_0-b_0c_0=0.2399$, and Case $\mathrm{Ia}$ in Table 1 occurs. Taking notice of $\mathrm{sign}(\mathrm{Re}b_{223})=-1$ in the coordinate transformation in \cite[Section 3]{JAS}, by phase portraits  in \cite [\S7.5] {Guck1983}, we show the complete bifurcation set for (\ref{3.1b}) with $a=0.1,\ b=0.9,\ d=0.5$ about parameters $(\tau,\varepsilon)$ and phase portraits for (\ref{eq473-3}) in Figure \ref{fig1}.
% \begin{figure}[htbp]
%	\centering
%			\includegraphics[scale=0.5]{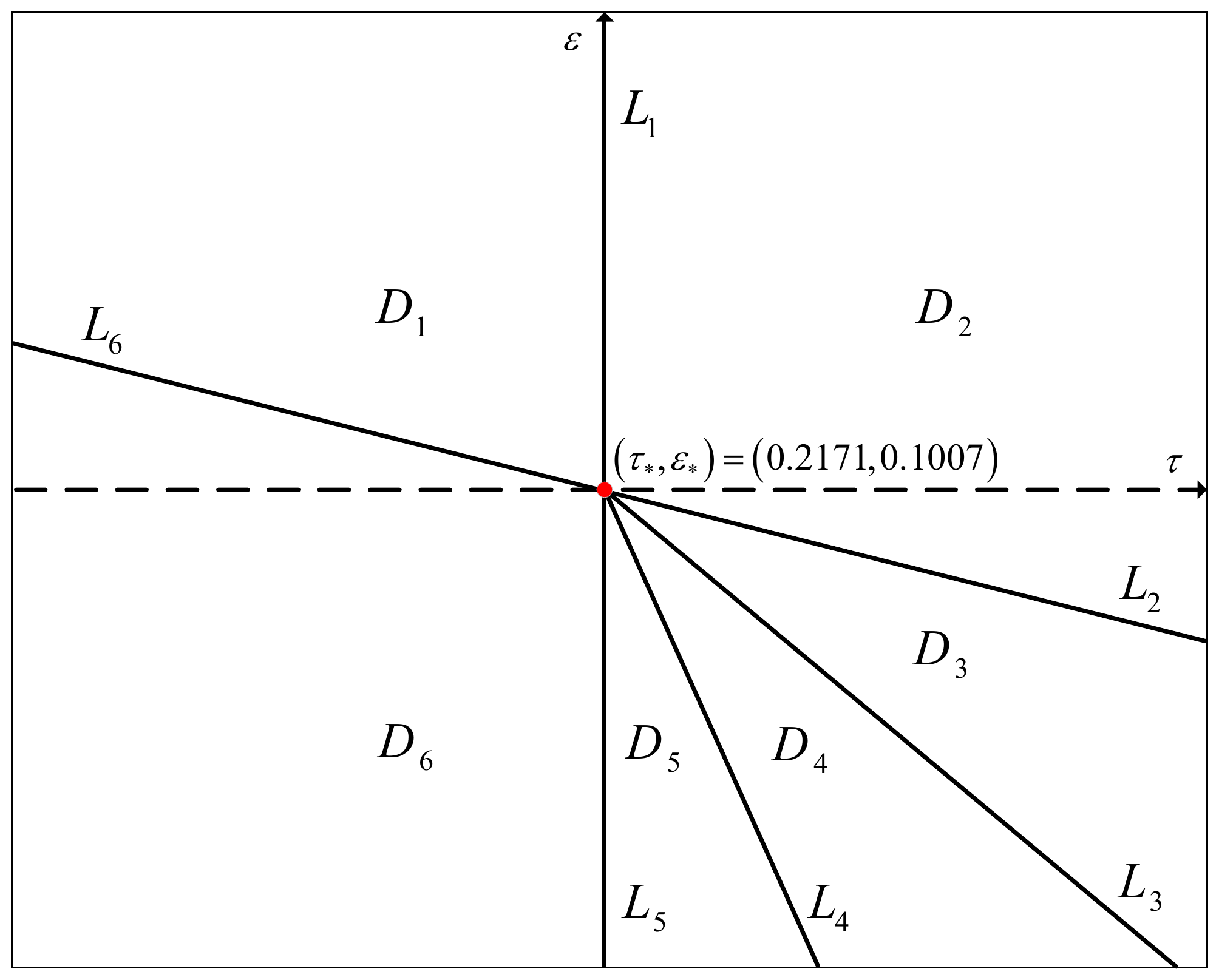}
%	\caption{Bifurcation sets in $(\tau,\varepsilon)$ plane for (\ref{3.1b}) with $a=0.1,\ b=0.9,\ d=0.5$.
%}
%	\label{fig3}
%\end{figure}
%
%\begin{figure}[htbp]
%	\centering
%	\includegraphics[scale=0.5]{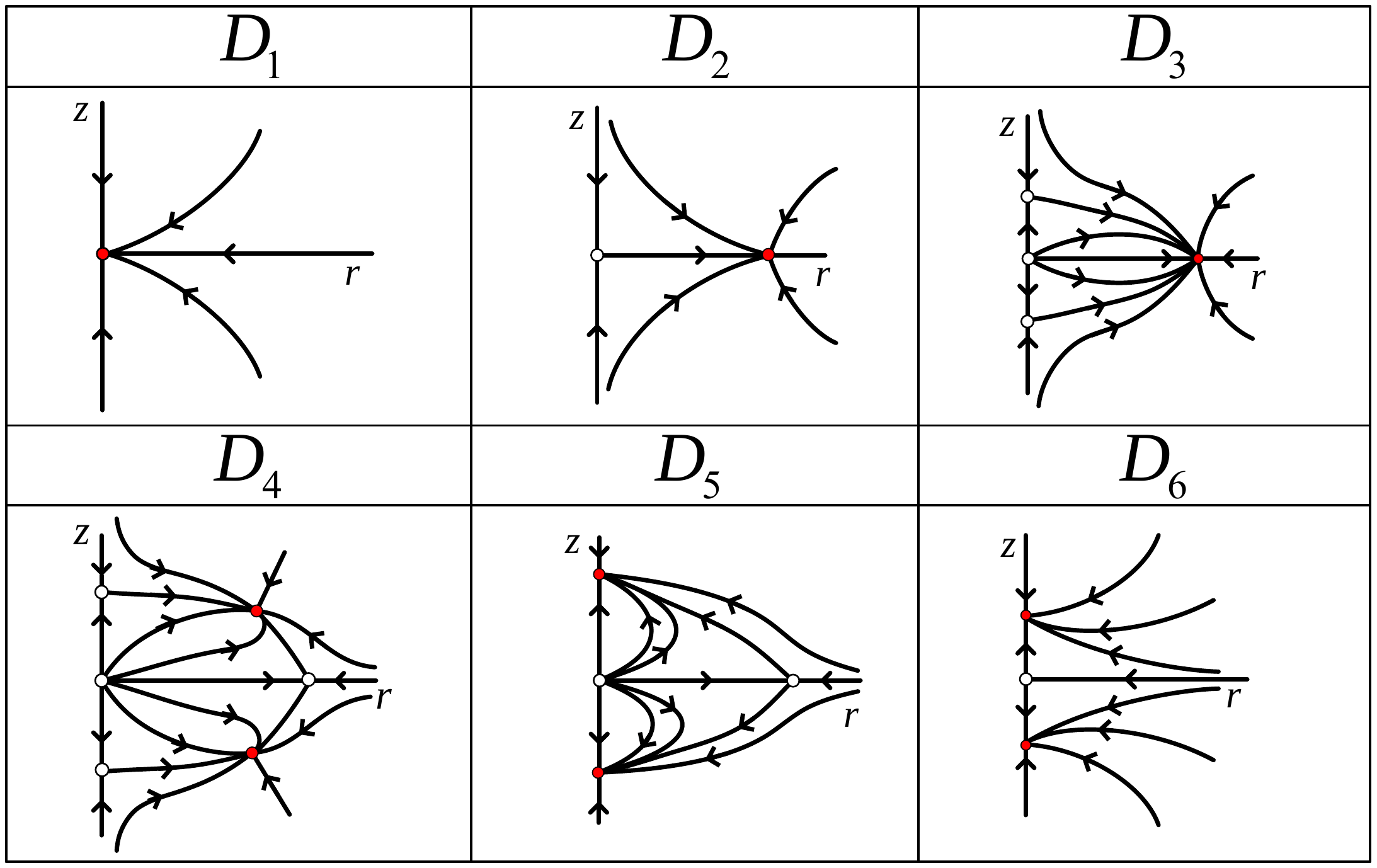}
%	\caption{Phase portraits for (\ref{eq473-3}) for Case Ia.
%	}
%	\label{fig32}
%\end{figure}

\begin{figure}[htbp]


	\centering
\subfigure[]{
		\begin{minipage}{0.4\linewidth}
			\centering
			\includegraphics[scale=0.30]{Ia-BP}
	\end{minipage}}
\subfigure[]{
		\begin{minipage}{0.56\linewidth}
			\centering
			\includegraphics[scale=0.35]{Ia-Ph}
		\end{minipage}}
\caption{In (a), bifurcation sets in $(\tau,\varepsilon)$ plane for (\ref{3.1b}) with $a=0.1,\ b=0.9,\ d=0.5$. In (b), phase portraits for (\ref{eq473-3}) for Case $ \mathrm{Ia}$. }\label{fig1}
\end{figure}

In Figure \ref{fig1}(a), critical bifurcation lines are, respectively,
$$
\begin{array}{rlc}
L_1:&\tau=\tau_*,~\varepsilon>\varepsilon_*,\\
L_2:&\varepsilon=\varepsilon_*-0.00021478(\tau-\tau_*),~\tau>\tau_*,\\
L_3:&\varepsilon=\varepsilon_*-0.1034(\tau-\tau_*),~\tau>\tau_*,\\
L_4:&\varepsilon=\varepsilon_*-0.1359(\tau-\tau_*),~\tau>\tau_*,\\
L_5:&\tau=\tau_*,~\varepsilon<\varepsilon_*,\\
L_6:&\varepsilon=\varepsilon_*-0.00021478(\tau-\tau_*),~\tau<\tau_*.
 \end{array}
   $$
By analysis in \cite[Section 4]{AJ}, we have following result.

\begin{theorem}\label{thm:4.1}
For system \eqref{3.1b} with $a=0.1,\ b=0.9,\ d=0.5$, dynamical phenomena are as follows when parameters $(\tau,\varepsilon)$ are sufficiently close to $(\tau_*,\varepsilon_*)$,
\begin{enumerate}
\item[(1)] The equilibrium $(u_*,v_*)$ is asymptotically stable when
$\varepsilon>\varepsilon_*-0.00021478(\tau-\tau_*)$
and $\tau<\tau_*$ (that is $(\tau, \varepsilon)\in D_1$);

 $0-$mode Hopf bifurcation occurs near the equilibrium $(u_*,v_*)$  at $(\tau,\ \varepsilon)\in L_1$.

\item[(2)] There exists an asymptotically stable spatially homogeneous periodic orbit which is bifurcated from equilibrium $(u_*,v_*)$, and $(u_*,v_*)$  loses its stability, when
$\varepsilon>\varepsilon_*-0.00021478(\tau-\tau_*)$
and $\tau>\tau_*$ (that is $(\tau, \varepsilon)\in D_2$);

 $1-$mode Turing bifurcation occurs near the equilibrium $(u_*,v_*)$  at $(\tau,\ \varepsilon)\in L_2$.

\item[(3)] There are two unstable spatially inhomogeneous steady state solutions  which are bifurcated from the equilibrium $(u_*,v_*)$ which is unstable, and the spatially homogeneous periodic orbit remains asymptotically stable, when
$\varepsilon_*-0.00021478(\tau-\tau_*)>\varepsilon>\varepsilon_*-0.1034(\tau-\tau_*)$
and $\tau>\tau_*$ (that is $(\tau, \varepsilon)\in D_3$);

 $1-$mode Turing bifurcation occurs near the spatially homogeneous periodic orbit at $(\tau,\ \varepsilon)\in L_3$.

 \item[(4)] There are two asymptotically stable spatially inhomogeneous periodic orbit  which are bifurcated from the spatially homogeneous periodic orbit. Moreover, their linear main parts are approximately
     $$E_*+\rho\phi_{2}(0)e^{\mathrm{i}\tau_*\omega_* t}+\bar{\rho}\bar{\phi}_{2}(0)e^{\mathrm{-i}\tau_*\omega_* t}\pm h\phi_{1}(0)\cos(\pi x),$$
     where $\rho$ and $h$ are some constants. The spatially homogeneous periodic orbit loses its stability, $(u_*,v_*)$ and two spatially inhomogeneous steady-state solutions are  still unstable, when
$\varepsilon_*-0.1359(\tau-\tau_*)<\varepsilon<\varepsilon_*-0.1034(\tau-\tau_*)$
and $\tau>\tau_*$ (that is $(\tau, \varepsilon)\in D_4$);

$0-$mode Hopf bifurcation occurs near two spatially inhomogeneous steady-state solutions at $(\tau,\ \varepsilon)\in L_4$.

 \item[(5)]  Two  spatially inhomogeneous periodic orbit disappear by Hopf bifurcation,  and two spatially inhomogeneous steady-state solutions are asymptotically stable, while $(u_*,v_*)$ and the spatially homogeneous periodic orbit are  still unstable, when
$\varepsilon<\varepsilon_*-0.1359(\tau-\tau_*)$
and $\tau>\tau_*$ (that is $(\tau, \varepsilon)\in D_5$);

$0-$mode Hopf bifurcation occurs near the equilibrium $(u_*,v_*)$  at $(\tau,\ \varepsilon)\in L_5$.

 \item[(6)] The spatially homogeneous periodic orbit disappears by Hopf bifurcation, and $(u_*,v_*)$ is unstable, while two  spatially inhomogeneous steady-state solutions remain asymptotically stable,
  when
$\varepsilon<\varepsilon_*-0.00021478(\tau-\tau_*)$
and $\tau<\tau_*$ (that is $(\tau, \varepsilon)\in D_6$);

$1-$mode Turing bifurcation occurs near the equilibrium $(u_*,v_*)$  at $(\tau,\ \varepsilon)\in L_6$.
 \end{enumerate}
where $\tau_*=0.2171$, $\varepsilon_*=0.1007$, $ \omega_*=0.9144$ and $(u_*,v_*)=(1,0.9)$.
\end{theorem}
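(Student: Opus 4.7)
\bigskip

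\noindent\emph{Proof proposal.} The plan is to carry out the analysis in three stages: first, classify the equilibria and limit cycles of the truncated amplitude system \eqref{eq473-3} using the Case $\mathrm{Ia}$ template of \cite[\S7.5]{Guck1983}; second, locate the four straight bifurcation lines of that template in the original $(\tau,\varepsilon)$-plane by inverting the affine maps $\varepsilon_1(\alpha)=-0.07723\,\alpha_1$ and $\varepsilon_2(\alpha)=0.00018873\,\alpha_1+0.8787\,\alpha_2$ computed in \eqref{z123}; and third, translate each amplitude-system object back to a solution of the PDE \eqref{3.1b} via the center-manifold reduction underlying Theorem \ref{amplitude}.

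For Stage 1, the trivial fixed point $(r,z)=(0,0)$ of \eqref{eq473-3} represents $(u_\ast,v_\ast)$. The semi-trivial branch $(\pm\sqrt{-\varepsilon_1},0)$ — which exists when $\varepsilon_1<0$, i.e.\ $\tau>\tau_\ast$ — corresponds to the pair of spatially inhomogeneous steady states with leading profile $\pm r\phi_1(0)\cos(\pi x)$; the semi-trivial branch $(0,\pm\sqrt{-\varepsilon_2/d_0})$ — which exists when $\varepsilon_2<0$ — corresponds (through the polar coordinate $z_2=ze^{\mathrm{i}\theta}$) to the spatially homogeneous periodic orbit of frequency $\omega_\ast$. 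The mixed-mode fixed point with $r,z>0$ exists in the wedge where $\varepsilon_1+b_0\cdot(-\varepsilon_2/d_0)<0$ and $\varepsilon_2+c_0\cdot(-\varepsilon_1)<0$, and by reversing the polar substitution it corresponds to the pair of spatially inhomogeneous periodic orbits whose linear approximation is exactly the one displayed in item~(4). The stability of each of these objects is read off directly from the Jacobian of \eqref{eq473-3}, and since $d_0=+1$, $b_0,c_0>0$, $d_0-b_0c_0>0$ (Case $\mathrm{Ia}$), the stability assignments are precisely those of Figure \ref{fig1}(b).

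For Stage 2, the four critical curves of Case $\mathrm{Ia}$ are $\{\varepsilon_1=0\}$, $\{\varepsilon_2=0\}$, the secondary-pitchfork curve of the Hopf branch at $\varepsilon_2=(c_0/d_0)(-\varepsilon_1)$ in the image variables, and the secondary-Hopf curve of the Turing branch at $\varepsilon_2=-b_0^{-1}\varepsilon_1\cdot(\text{constant})$; each pulls back, via the linear relations in \eqref{z123}, to a straight line in $(\tau,\varepsilon)$ passing through $(\tau_\ast,\varepsilon_\ast)$. The lines $L_1$ and $L_5$ are the two rays of $\{\tau=\tau_\ast\}$ corresponding to $\varepsilon_1=0$; the lines $L_2$ and $L_6$ are the two rays of the line $\varepsilon=\varepsilon_\ast-0.00021478(\tau-\tau_\ast)$ corresponding to $\varepsilon_2=0$; and the slopes $-0.1034$ and $-0.1359$ of $L_3$ and $L_4$ are obtained by substituting $\varepsilon_1=-0.07723\alpha_1$ and the coefficients $b_0=0.6476$, $c_0=1.1737$, $d_0=1$ into the secondary-pitchfork and secondary-Hopf conditions and solving for $\alpha_2/\alpha_1$. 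Dividing the punctured neighbourhood of $(\tau_\ast,\varepsilon_\ast)$ by these six rays produces the six open sectors $D_1,\ldots,D_6$, and on each sector the phase portrait of \eqref{eq473-3} is structurally unchanged, so a single representative suffices.

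For Stage 3, the conclusions of the amplitude system are lifted to \eqref{3.1b} by the standard PFDE center-manifold argument of \cite{Faria2000,Faria2002,JAS}: since all Fourier modes other than $k=0$ (Hopf) and $k=k_1=1$ (Turing) have strictly stable linearizations at $(\tau_\ast,\varepsilon_\ast)$ — this is guaranteed by ${\bf (N_3)}$ and by Theorem \ref{T23} ensuring transversality and simplicity of the critical eigenvalues — hyperbolic fixed points and hyperbolic limit cycles of the planar normal form persist as hyperbolic invariant sets of the full PDE with the same stability type, and their leading-order profiles are $\phi_1(\theta)\cos(\pi x)$ and $\phi_2(\theta)e^{\mathrm{i}\tau_\ast\omega_\ast t/\tau_\ast}+\text{c.c.}$ respectively. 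I expect the main obstacle to be the bookkeeping in Stage 2 — identifying which sign conventions and which sector labels in the Guckenheimer--Holmes diagram for Case $\mathrm{Ia}$ correspond, after the sign flip induced by $\mathrm{sign}(\mathrm{Re}\,b_{223})=-1$, to $D_1,\ldots,D_6$ — since the numerical slopes $-0.00021478$, $-0.1034$, $-0.1359$ are very close to each other and a sign error in matching would swap several of the sectors; this is handled by tracking the orientation-reversal explicitly, exactly as in \cite[Section 4]{AJ}.
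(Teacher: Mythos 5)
Your overall strategy is the same as the paper's: compute the normal-form coefficients \eqref{z123}, reduce to the planar amplitude system \eqref{eq473-3}, identify Case $\mathrm{Ia}$ of Table \ref{tab1}, pull the critical curves of the unfolding back to the $(\tau,\varepsilon)$-plane through the affine maps $\varepsilon_1(\alpha),\varepsilon_2(\alpha)$, and lift the phase portraits to \eqref{3.1b} via the center-manifold machinery of \cite{JAS} and the analysis of \cite[Section 4]{AJ}. Your Stage 2 slope computations (solving $\varepsilon_2=c_0\varepsilon_1$ and $\varepsilon_1=b_0\varepsilon_2/d_0$ for $\alpha_2/\alpha_1$ to get $-0.1034$ and $-0.1359$), your mixed-mode existence wedge, and your Stage 3 persistence argument are all consistent with what the paper does.

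There is, however, a concrete error in Stage 1: you have interchanged the roles of $r$ and $z$. By the definitions in Theorem \ref{amplitude}, $\varepsilon_1(\alpha)=\mathrm{Re}\,b_2(\alpha)\,\mathrm{sign}(\mathrm{Re}\,b_{223})=-0.07723\,\alpha_1$ is built from the Hopf eigenvalue perturbation $b_2(\alpha)$ (it depends only on $\alpha_1$ because $k_2=0$), so $r$ is the modulus of the Hopf variable $z_2$ and the branch $r^2=-\varepsilon_1$, born on $\tau=\tau_*$, is the spatially homogeneous periodic orbit; likewise $\varepsilon_2(\alpha)=a_1(\alpha)\,\mathrm{sign}(\mathrm{Re}\,b_{223})$ comes from the Turing eigenvalue $a_1(\alpha)$, so $z$ is the real Turing amplitude $z_1$ and $(0,\pm\sqrt{-\varepsilon_2/d_0})$ is the pair of spatially inhomogeneous steady states (this is also forced by $b_0\propto\mathrm{Re}\,b_{112}$ multiplying $z^2$ in the $\dot r$ equation, since $b_{112}$ is the $z_1^2z_2$ coefficient). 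You assert the opposite, which contradicts your own Stage 2 (where you correctly match $L_1\cup L_5=\{\varepsilon_1=0\}$ to the Hopf line and $L_2\cup L_6=\{\varepsilon_2=0\}$ to the Turing line) and, if carried through, would swap the periodic orbit and the steady-state pair in every item (2)--(6) — e.g.\ it would predict that crossing $\tau=\tau_*$ from $D_1$ to $D_2$ creates two inhomogeneous steady states rather than the homogeneous periodic orbit. The fix is only a relabeling, but it is exactly the bookkeeping step on which the content of the theorem rests; your displayed secondary-bifurcation conditions also carry sign slips (the pitchfork of the Hopf branch occurs at $\varepsilon_2=c_0\varepsilon_1$, not $\varepsilon_2=(c_0/d_0)(-\varepsilon_1)$), even though the slopes you quote are the correct ones. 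Finally, note that the time reversal forced by $\mathrm{sign}(\mathrm{Re}\,b_{223})=-1$ is not optional bookkeeping: without it every stability assignment read off the Case $\mathrm{Ia}$ portrait of \cite{Guck1983} comes out reversed, so it must be tracked as explicitly as you promise.
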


\begin{remark}
$k-$mode Hopf (Turing) bifurcation means that the corresponding  $k-$th characteristic equation $D_{k}(\lambda)=0$ has a pair of pure imaginary roots (a zero toot).
\end{remark}

The above analytical information is an useful starting point for the use of adequate numerical tools. Now, by numerical simulations, we are going to show different temporal and spatial patterns of \eqref{3.1b} with $a=0.1,~b=0.9, d=0.5$ when $(\tau,\ \varepsilon) \in D_1-D_6$, respectively. We will see that numerical results are consistent with the theoretical results in Theorem \ref{thm:4.1}.
\begin{enumerate}
\item [(i)] Parameters $(\tau,\ \varepsilon)=(\tau_*,\ \varepsilon_*)+(-0.05,\ 0.05)\in
D_1$. Figure \ref{fig3} shows that the equilibrium $(u_*,v_*)$ is
asymptotically stable. $u(x,t),v(x,t)$ are solutions of \eqref{3.1b} with initial value functions $(\varphi(x,t),\psi(x,t))=(1+0.1\cos(\pi x),
1+0.1\cos (\pi x),~(x,t)\in[-0.1671,0]\times[0,1]$.
\begin{figure}[htbp]
	\centering
	\subfigure[$u(t,x)$]{
		\begin{minipage}{0.48\linewidth}
			\centering
            \includegraphics[scale=0.38]{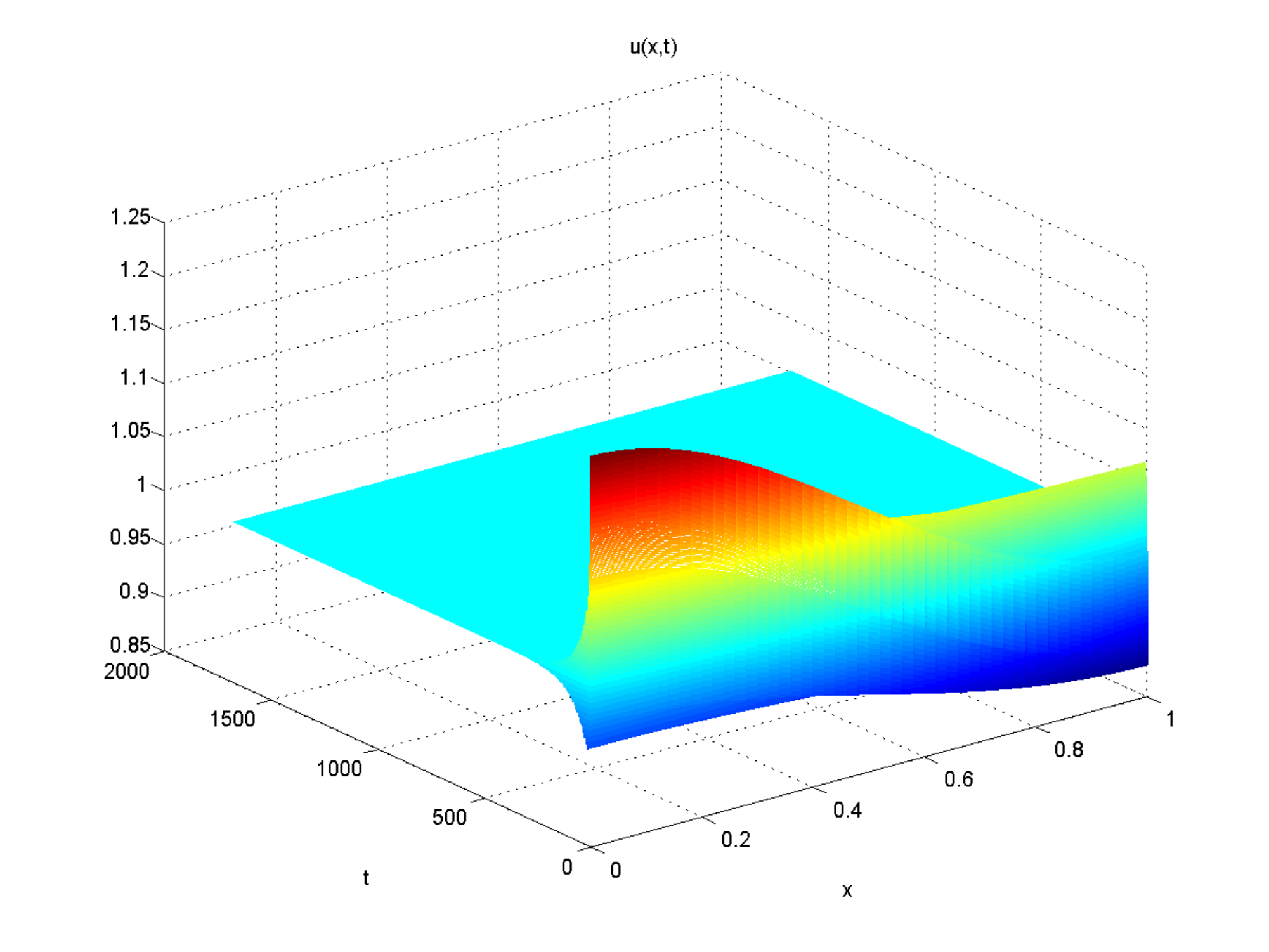}
	\end{minipage}}
	\subfigure[$v(t,x)$]{
		\begin{minipage}{0.48\linewidth}
			\centering
             \includegraphics[scale=0.38]{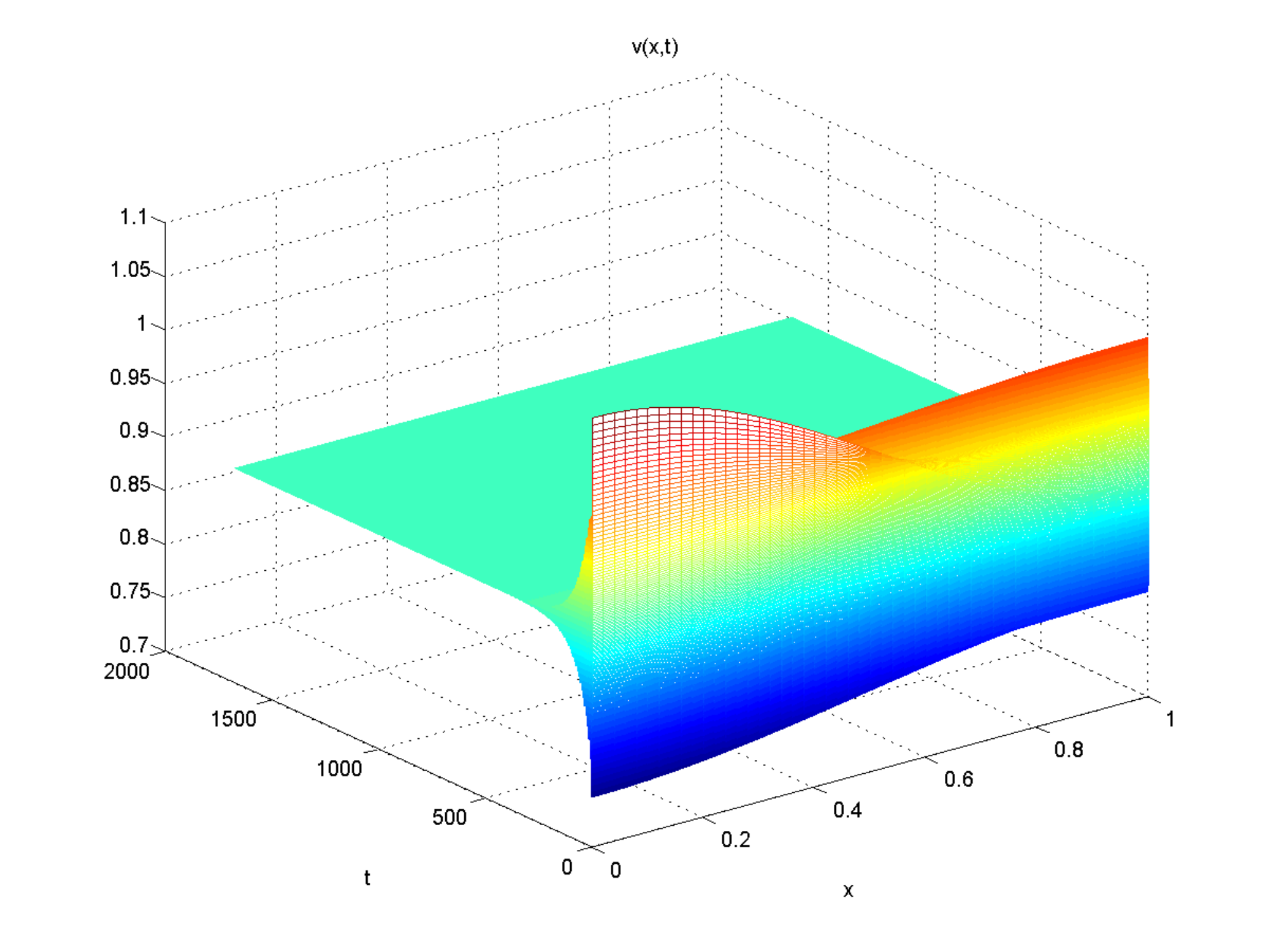}
		\end{minipage}
	}
	\caption{The equilibrium $(u_*,v_*)$ is
asymptotically stable when  $(\tau,\ \varepsilon)\in D_1$.} \label{fig3}
\end{figure}

\item[(ii)]Parameters $(\tau,\ \varepsilon)=(\tau_*,\ \varepsilon_*)+(0.05,\ 0.05)\in
D_2$. Figure \ref{fig4} shows that there exists an asymptotically stable spatially homogeneous periodic orbit in \eqref{3.1b}
. The initial value functions are $(\varphi(x,t),\psi(x,t))=(1+0.1\cos( \pi x),
1+0.1\cos(\pi x)),~(x,t)\in[-0.2671,0]\times[0,1]$, the simulated time is from $2630$ to $2671$.

For $(\tau,\ \varepsilon)\in
D_3$,  similar to Figure \ref{fig4}, an asymptotically stable spatially homogeneous periodic orbit  can be also  simulated, which is consistent with the assertions (3) of Theorem 4.1.
\begin{figure}[htbp]
	\centering
	\subfigure[$u(t,x)$]{
		\begin{minipage}{0.48\linewidth}
			\centering
            \includegraphics[scale=0.42]{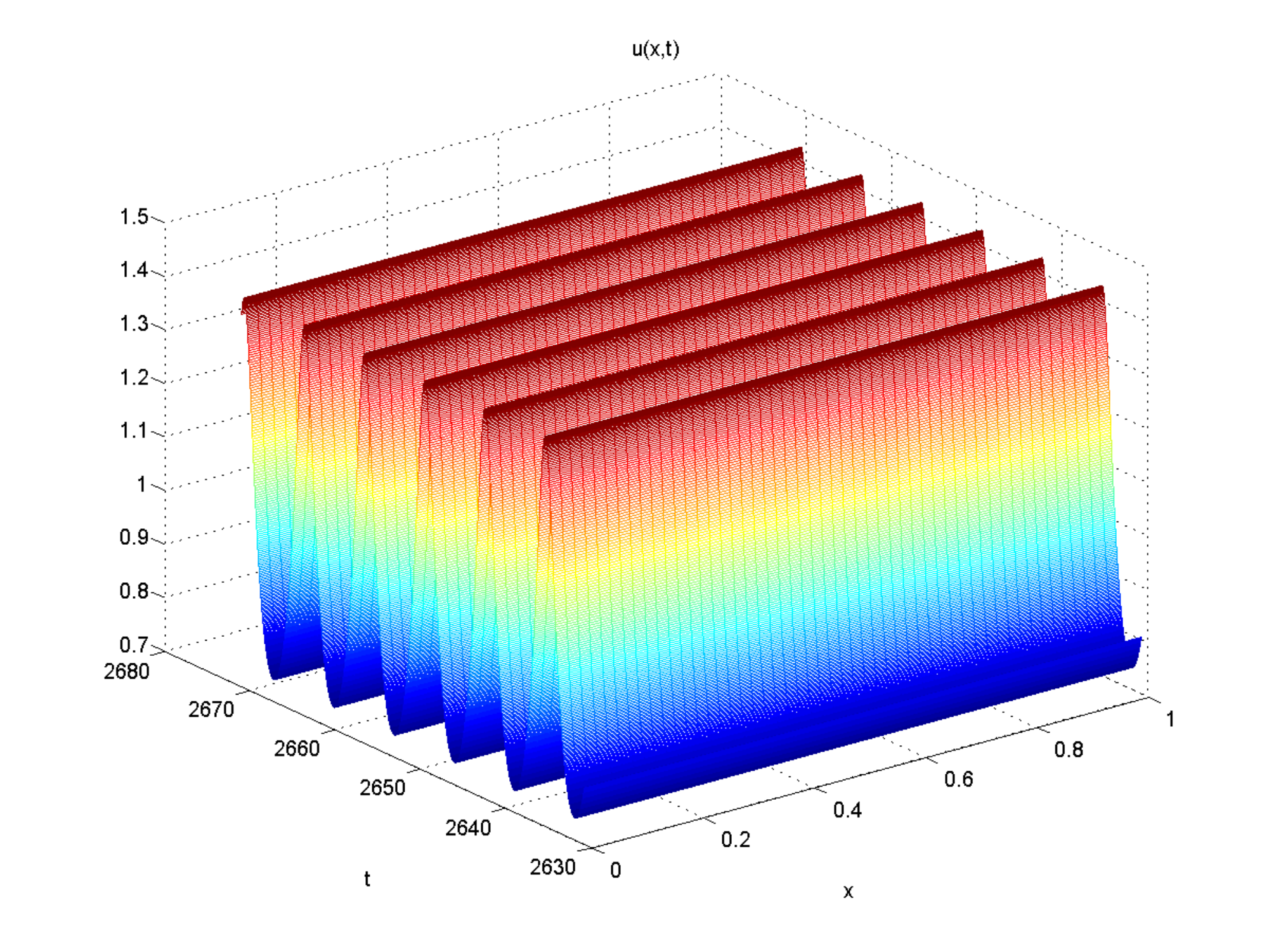}
	\end{minipage}}
	\subfigure[$v(t,x)$]{
		\begin{minipage}{0.48\linewidth}
			\centering
             \includegraphics[scale=0.42]{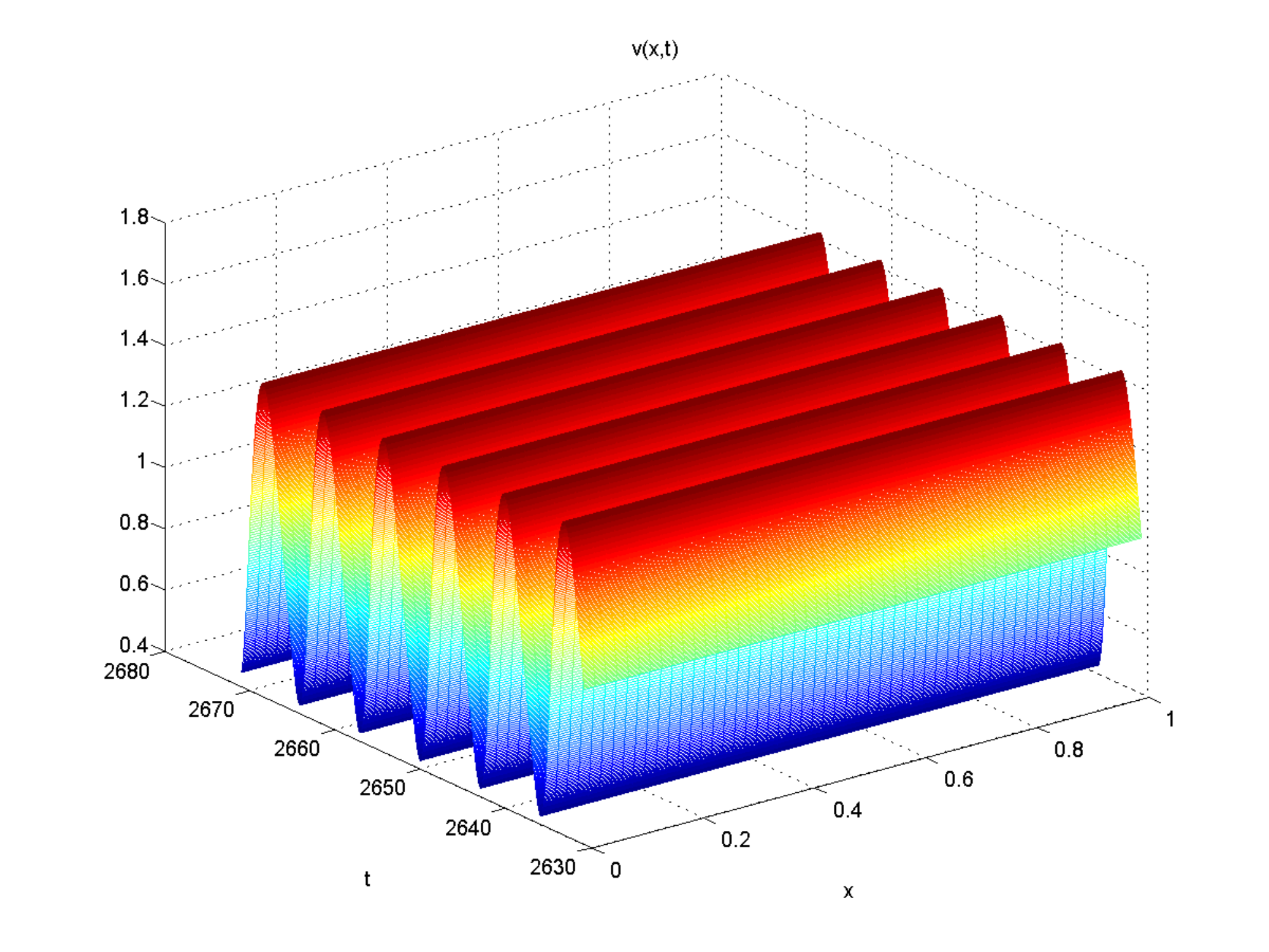}
		\end{minipage}
	}
	\caption{There exists an asymptotically stable spatially homogeneous periodic orbit in \eqref{3.1b} when  $(\tau,\ \varepsilon)\in D_2$.} \label{fig4}
\end{figure}

\item[(iii)]Parameters $(\tau,\ \varepsilon)=(\tau_*,\ \varepsilon_*)+(0.05,\ -0.0063)\in
D_4$. Figure \ref{fig5} shows that two stable spatially inhomogeneous periodic orbits coexist in \eqref{3.1b}. The initial value functions are $(\varphi(x,t),\psi(x,t))=(1-0.1\cos (\pi x),
1-0.1\cos (\pi x))$ in (a),(b) and $(\varphi(x,t),\psi(x,t))=(1+0.1\cos (\pi x),
1+0.1\cos (\pi x))$ in (c), (d), respectively, $(x,t)\in[-0.2671,0]\times[0,1]$,
the simulated time is from $2630$ to $2671$.
\begin{figure}[htbp]
	\centering
	\subfigure[$u(t,x)$]{
		\begin{minipage}{0.48\linewidth}
			\centering
            \includegraphics[scale=0.42]{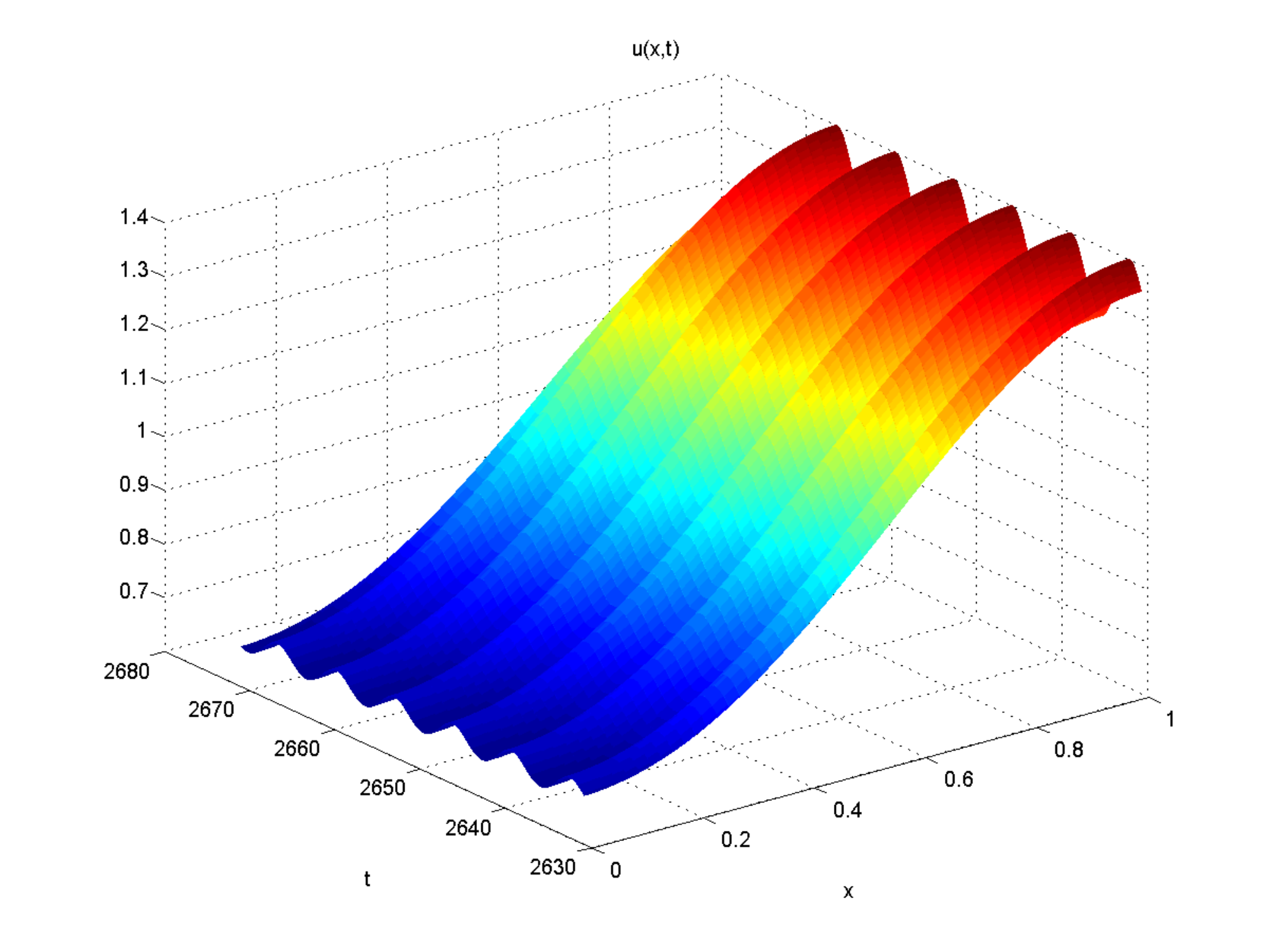}
	\end{minipage}}
	\subfigure[$v(t,x)$]{
		\begin{minipage}{0.48\linewidth}
			\centering
             \includegraphics[scale=0.42]{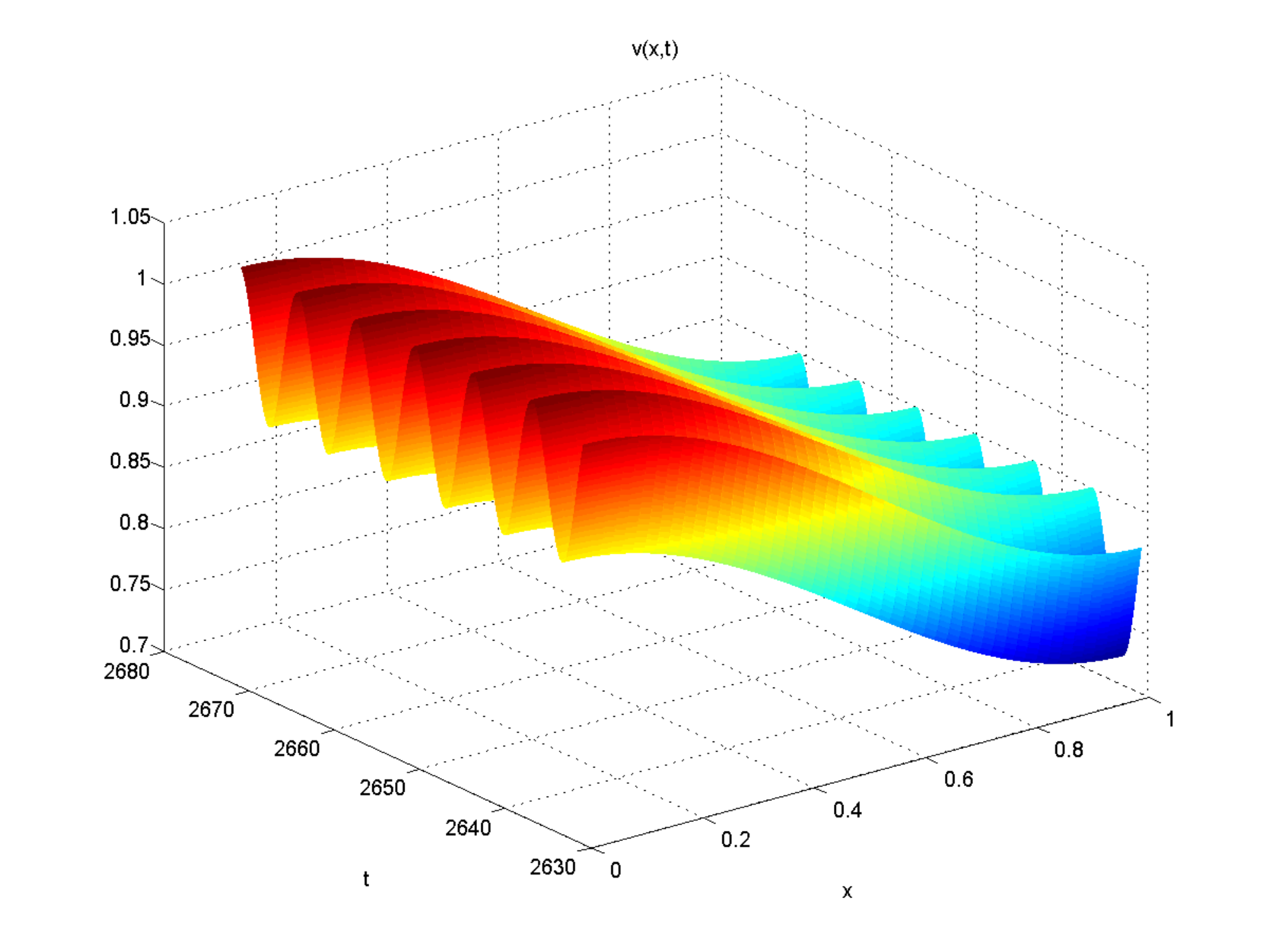}
		\end{minipage}
	}

	\subfigure[$u(t,x)$]{
		\begin{minipage}{0.48\linewidth}
			\centering
			\includegraphics[scale=0.42]{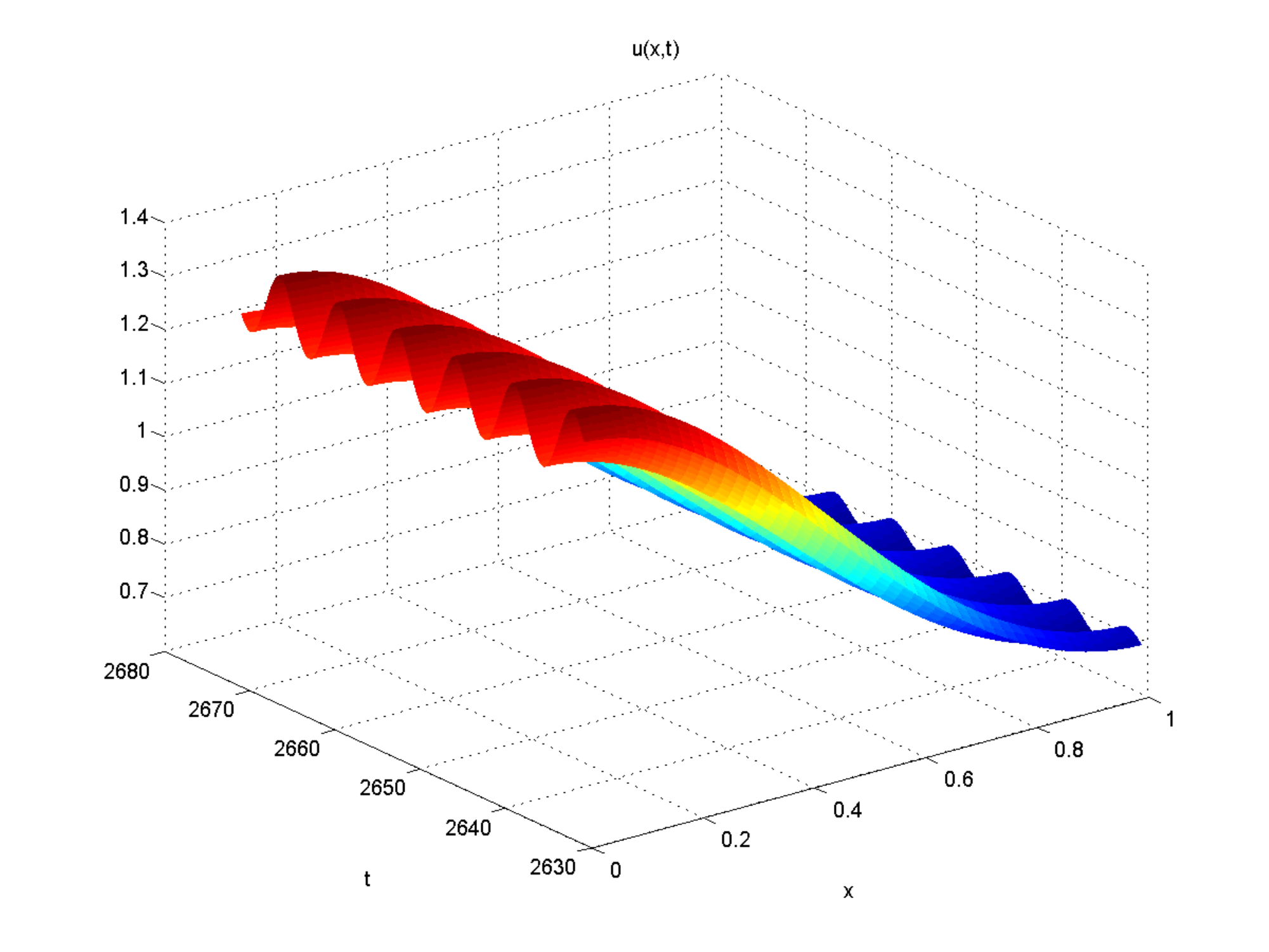}
	\end{minipage}}
	\subfigure[$v(t,x)$]{
		\begin{minipage}{0.48\linewidth}
			\centering
			\includegraphics[scale=0.42]{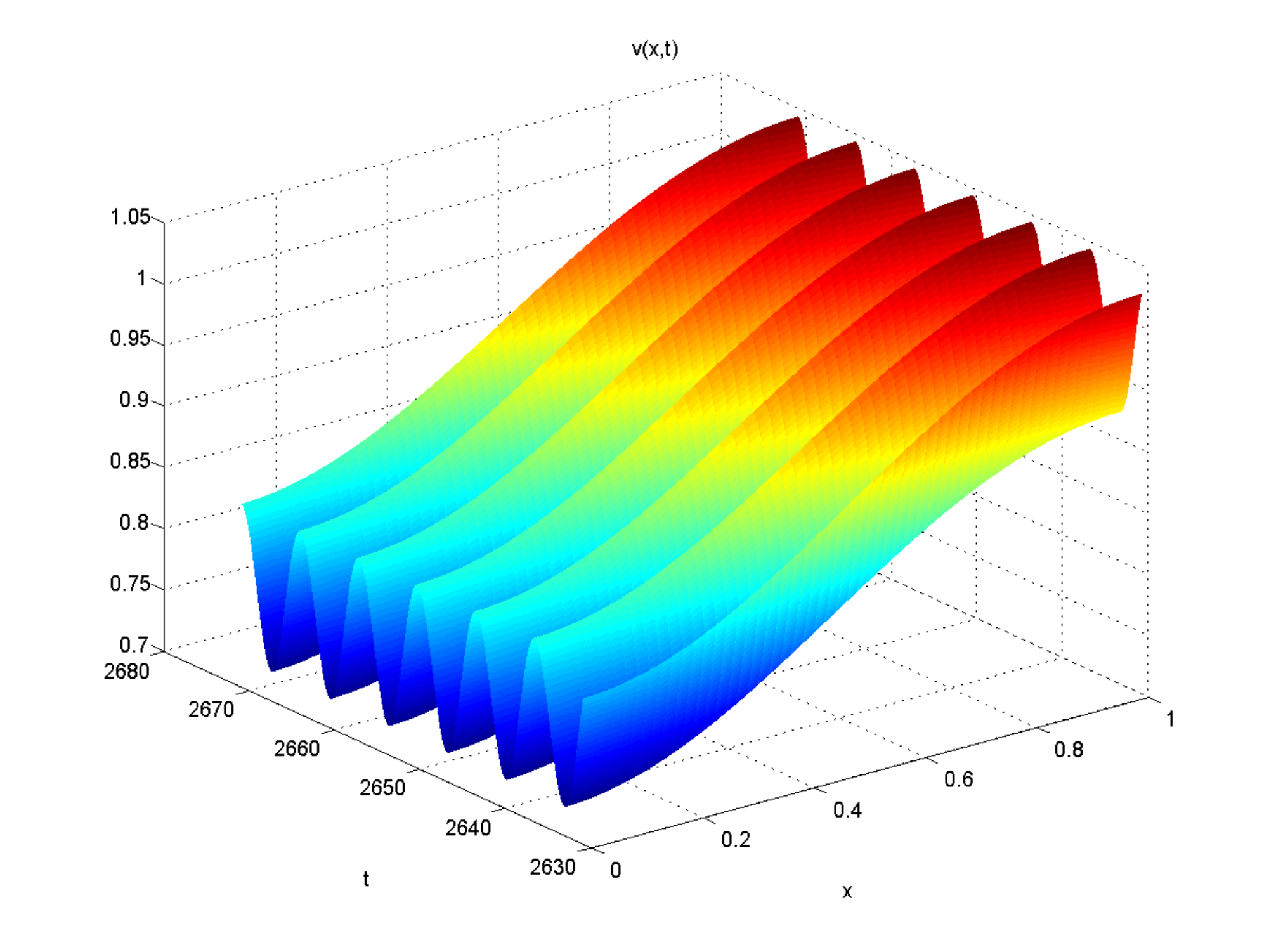}
		\end{minipage}
	}
	\caption{Two stable spatially inhomogeneous periodic orbits coexist in \eqref{3.1b} when  $(\tau,\ \varepsilon)\in D_4$.} \label{fig5}
\end{figure}

\item[(iv)]Parameters $(\tau,\ \varepsilon)=(\tau_*,\ \varepsilon_*)+(0.05,\ -0.03)\in
D_5$. Figure \ref{fig6} shows that two stable spatially inhomogeneous steady state solutions coexist in \eqref{3.1b}. The initial value functions are $(\varphi(x,t),\psi(x,t))=(0.9,
1.1)$ in (a),(b) and $(\varphi(x,t),\psi(x,t))=(1+0.1\cos (\pi x),
1+0.1\cos (\pi x))$ in (c), (d), respectively, $(x,t)\in[-0.2671,0]\times[0,1]$.
 The simulated time is from $1259$ to $1335$ in (a),(b) and from $2630$ to $2671$ in (c),(d), respectively.

For $(\tau,\ \varepsilon)\in
D_6$,  similar to Figure \ref{fig6}, a pair of stable spatially inhomogeneous steady state solutions can be also simulated, which are consistent with the assertions (6) of Theorem \ref{thm:4.1}.
\begin{figure}[htbp]
	\centering
	\subfigure[$u(t,x)$]{
		\begin{minipage}{0.48\linewidth}
			\centering
            \includegraphics[scale=0.42]{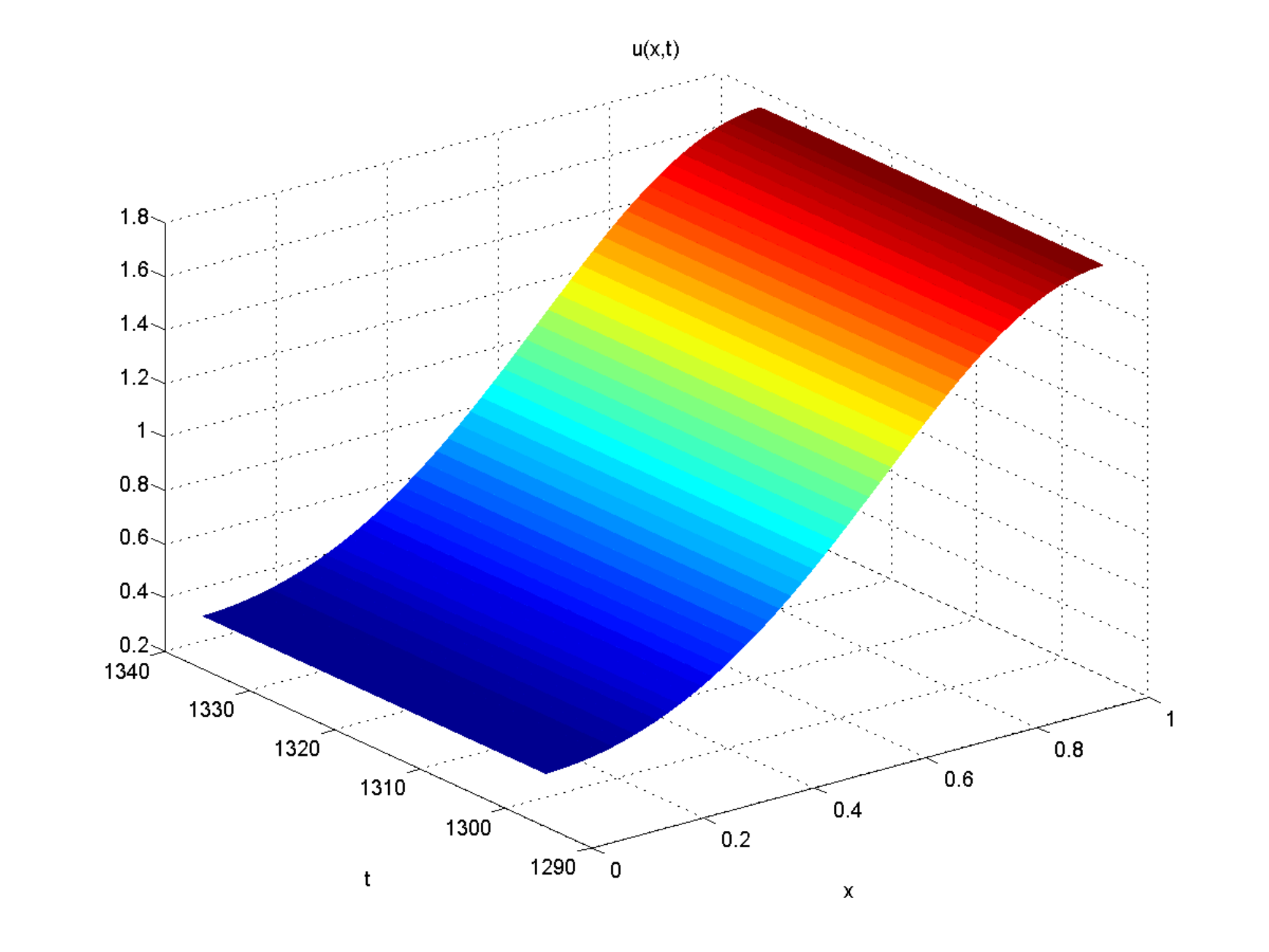}
	\end{minipage}}
	\subfigure[$v(t,x)$]{
		\begin{minipage}{0.48\linewidth}
			\centering
             \includegraphics[scale=0.42]{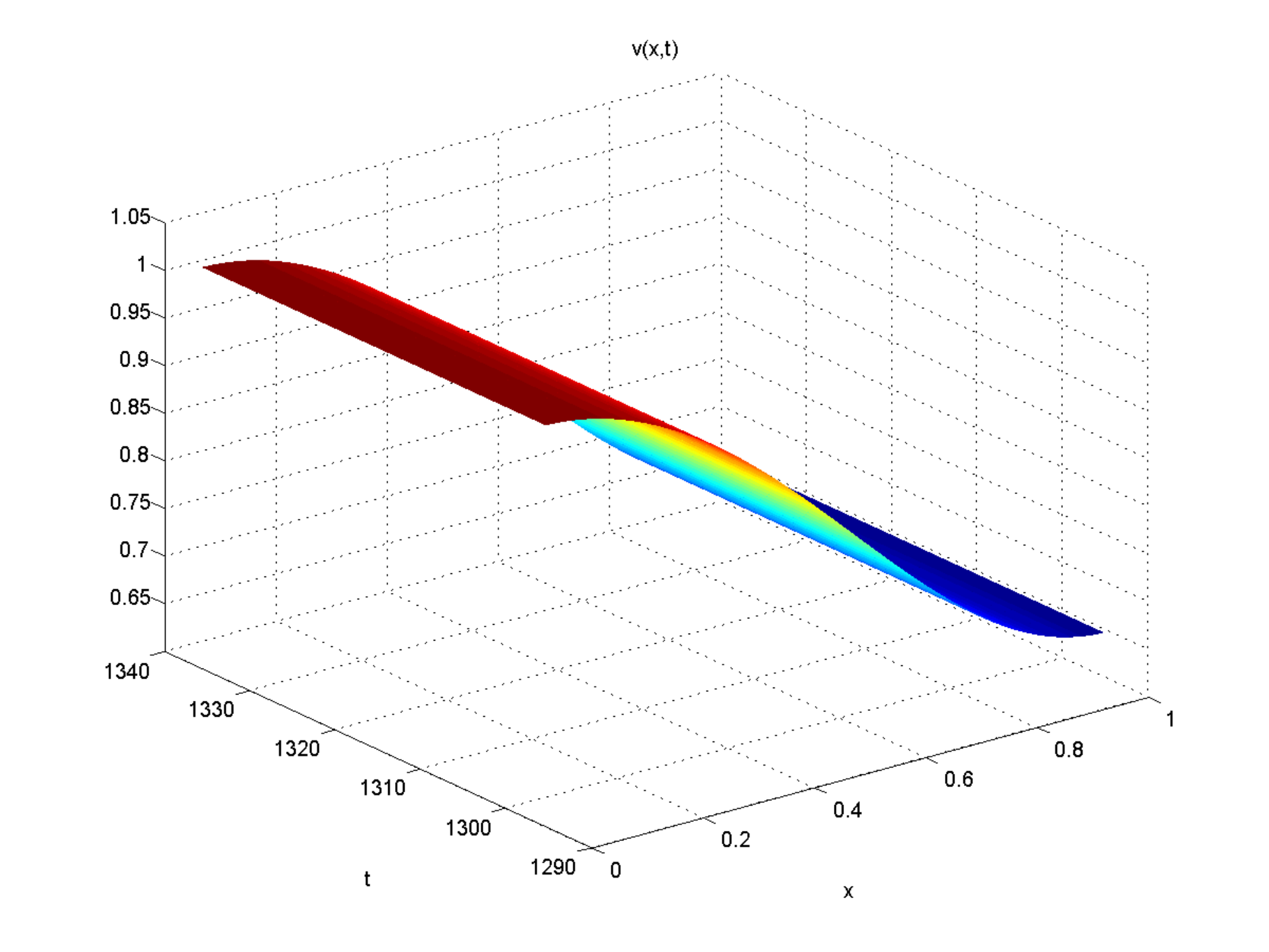}
		\end{minipage}
	}

	\subfigure[$u(t,x)$]{
		\begin{minipage}{0.48\linewidth}
			\centering
			\includegraphics[scale=0.42]{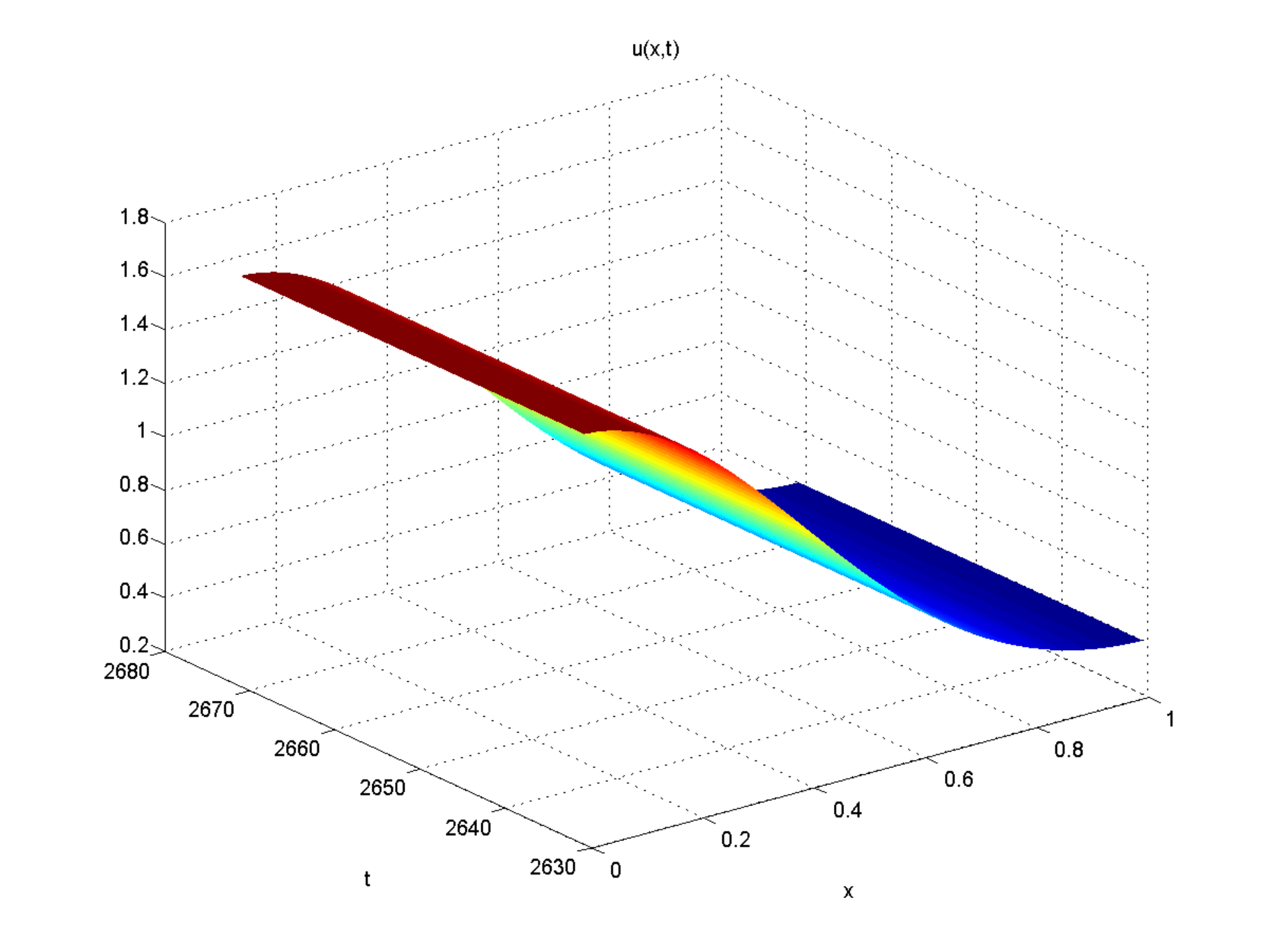}
	\end{minipage}}
	\subfigure[$v(t,x)$]{
		\begin{minipage}{0.48\linewidth}
			\centering
			\includegraphics[scale=0.42]{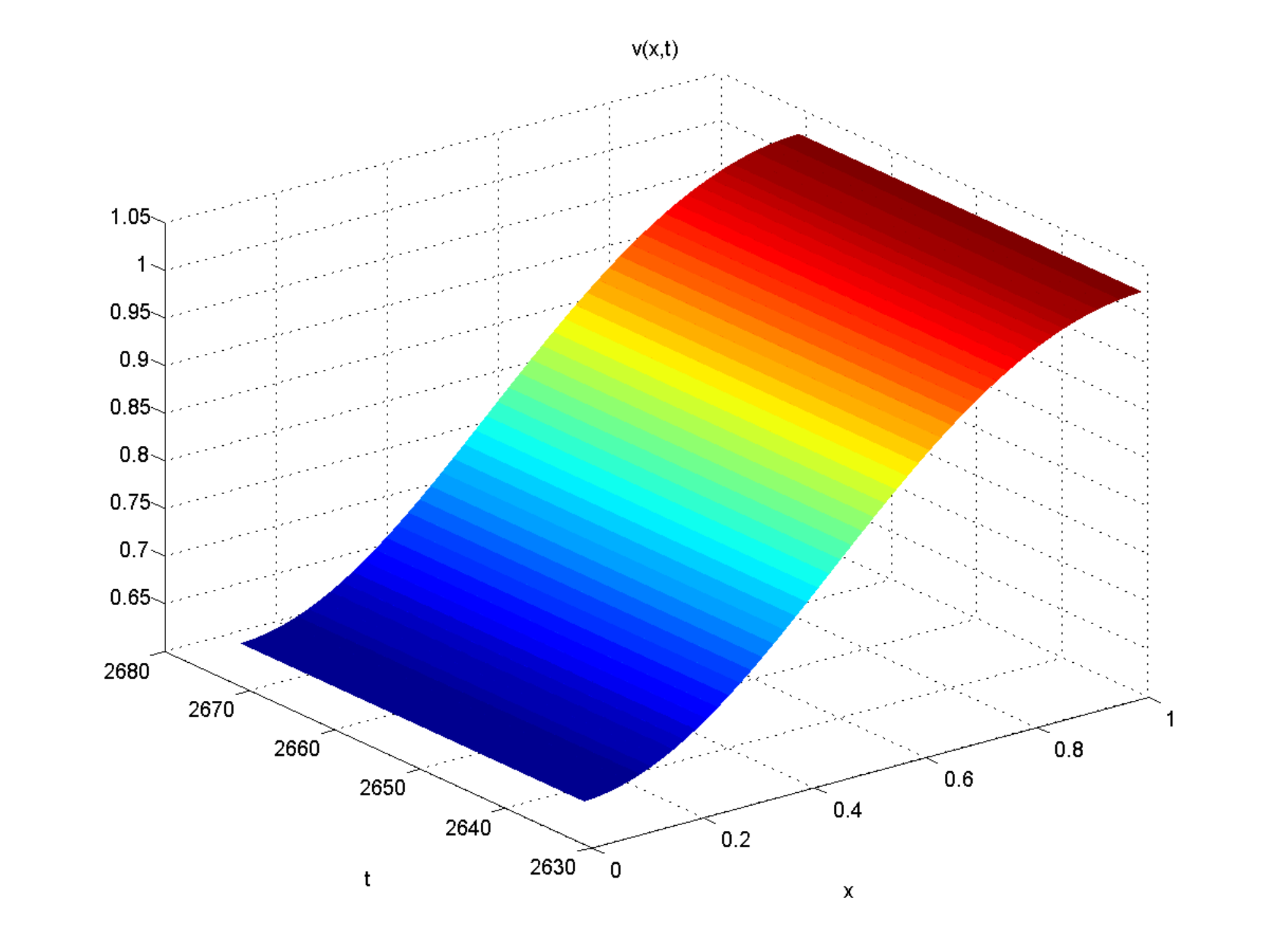}
		\end{minipage}
	}
	\caption{Two stable spatially inhomogeneous steady state solutions coexist  in \eqref{3.1b} when  $(\tau,\ \varepsilon)\in D_5$.} \label{fig6}
\end{figure}
\end{enumerate}

\end{example}

\begin{example}\label{exm:2}
Take $k_1=3$, then $d_{3,4}=0.0255,~d_{2,3}=0.0525$ by \eqref{dkk+}. Choose $d=0.05\in (d_{3,4},~d_{2,3})$, thus $\varepsilon_*=\varepsilon_*(3,0.05)=0.1056$, see Figure \ref{fig33}. So, the system (\ref{3.1b}) with $d=0.05$ undergoes $3-$mode Turing bifurcation near the equilibrium $(1,0.9)$ at $\varepsilon=0.1056$. Similar to previous discussion, $k_2=0$, $\omega_0^+=0.9144,\ \tau_0=0.2171$ are the same as Example \ref{exm:1}. Thus, we have following conclusions.
\begin{corollary}
For parameters $a=0.1,\ b=0.9,\ d=0.05$, we have that
\begin{enumerate}
%\item[(1)] When $\tau=0.2171,\ \varepsilon=0.1056$,  $D_0(\lambda)$ has a pair of purely imaginary roots $\pm 0.9144 \mathrm{i}$, $D_3(\lambda)$ has a simple zero root, with all other roots  of $D_{k}(\lambda)$ have negative real parts $k\in\mathbb{N}_0$.
\item[(1)] System (\ref {3.1b}) undergoes $(3,0)-$mode Turing-Hopf bifurcation near equilibrium $(u,v)=(1,0.9)$  at $\tau=0.2171,\ \varepsilon=0.1056$.% with $0\leq k<K^*$ and $j, k\in \mathbb{N}_0$.
\item[(2)] The equilibrium $(u,v)=(1,0.9)$ is asymptotically stable in system (\ref{3.1b}) with $\tau\in [0,0.2171)$ for $ 0.1056<\varepsilon<0.1079 $, and unstable for $0.0600<\varepsilon<0.1056$.%
    %$\varepsilon>0.0022$, and unstable for $0<\varepsilon<0.0022$ or $\tau > 0.2014$.
\end{enumerate}
\end{corollary}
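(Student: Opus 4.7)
The plan is to apply the general framework of Theorem 2.15, which bundles together Theorems 2.3, 2.9, 2.11, and 2.12. Since Example 4.2 shares the same $a,b$ (hence the same equilibrium $(u_*,v_*)=(1,0.9)$) as Example 4.1, I would first note that condition $\bf (N_0)$ is already verified: $u_*^2=1>2u_*v_*-1=0.8>0$. The bifurcation parameter values specific to $d=0.05$ must then be derived from the two explicit formulas the paper has already produced for these parameters, namely
\begin{equation*}
\varepsilon_*(k_1,d)=\frac{4dk_1^2\pi^2-5}{5dk_1^2\pi^2(dk_1^2\pi^2+1)},\qquad d_{k,k+1}=\frac{1}{2\pi^2(0.8)}\Bigl[k^2+\tfrac{1}{(k+1)^2}+\sqrt{\bigl(\tfrac{1}{k^2}+\tfrac{1}{(k+1)^2}\bigr)^2+\tfrac{3.2}{k^2(k+1)^2}}\Bigr].
\end{equation*}

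For part (1), I would plug $k=2,3$ into the $d_{k,k+1}$ formula to obtain $d_{3,4}\approx 0.0255$ and $d_{2,3}\approx 0.0525$, and check $d=0.05\in(d_{3,4},d_{2,3})$, so by Lemma 2.2(d) the first Turing curve passes through the $k_1=3$ branch at this $d$. Evaluating $\varepsilon_*(3,0.05)$ gives $0.1056$, which is the $3$-mode Turing value by Theorem 2.3. For the Hopf side, since $k_2=0$, all quantities $p_0, r_0, s_0, q_0$ are independent of $\varepsilon$ and $d$, so $\omega_0^+=0.9144$ and $\tau_0=0.2171$ are inherited verbatim from Example 4.1. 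To legitimize the claim $k_2=0$, I would compute $K^0(\varepsilon_*)$ from (2.16) and verify $(u_*^2-2u_*v_*)^2(\varepsilon_*^2+1)<1$, giving $K^*=K^0<1$, so $k=0$ is the unique admissible Hopf mode and $\bf (N_3)$ holds trivially. Theorem 2.15 then yields the $(3,0)$-mode Turing–Hopf bifurcation at $(\tau,\varepsilon)=(0.2171,0.1056)$.

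For part (2), the stability assertion on $\tau\in[0,\tau_{k_2})$ follows directly from Theorem 2.12(2) once $\varepsilon>\varepsilon_*=0.1056$ places us on the safe side of the first Turing curve; the instability for $\varepsilon<\varepsilon_*$ is Theorem 2.3(2)(b). The nontrivial content of the stated interval $0.1056<\varepsilon<0.1079$ (resp.\ $0.0600<\varepsilon<0.1056$) is to ensure that as $\varepsilon$ moves away from $\varepsilon_*$, no additional competing mode displaces $k_2=0$ in $\bf (N_3)$ and no other mode's $\varepsilon$-dependent $\tau_k$ drops below $\tau_0=0.2171$. I would therefore track, for each $k\in\{1,2,3,\dots\}$ with $0<k<K^*(\varepsilon)$, the $\varepsilon$-values at which either $K^0(\varepsilon)$ crosses an integer or $\tau_k(\varepsilon)=\tau_0$, and take the closest such threshold above and below $\varepsilon_*$; the numerical values $0.1079$ and $0.0600$ are the outcomes of this monotone tracking.

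The main obstacle is the last step: verifying uniqueness of $k_2=0$ across the two stated $\varepsilon$-intervals. The purely algebraic computations (evaluating $\varepsilon_*(3,0.05)$, $d_{3,4}$, $d_{2,3}$, $K^0(\varepsilon_*)$, and the shared $\omega_0^+,\tau_0$) are routine once Example 4.1 is available. What is not routine is showing that the integer constraint $0\le k<K^*(\varepsilon)$ is preserved and that the ordering $\min_{k}\tau_k=\tau_0$ persists on each specified interval; this amounts to a one-dimensional monotonicity argument in $\varepsilon$ applied to the transcendental equation (2.11) for $\tau_k$ at each relevant mode, and it is the only place where the specific endpoints $0.1079$ and $0.0600$ enter the argument.
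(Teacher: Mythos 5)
Your treatment of part (1) is exactly the paper's: verify $\mathbf{(N_0)}$, compute $d_{3,4}=0.0255$ and $d_{2,3}=0.0525$ from \eqref{dkk+}, check $d=0.05\in(d_{3,4},d_{2,3})$, evaluate $\varepsilon_*(3,0.05)=0.1056$, and observe that the Hopf data $k_2=0$, $\omega_0^+=0.9144$, $\tau_0=0.2171$ are inherited verbatim from Example \ref{exm:1} because the $k=0$ characteristic equation involves neither $d$ nor $\varepsilon$; Theorem \ref{thm:2.15} then gives the $(3,0)$-mode Turing--Hopf point. The paper's own justification is literally this and nothing more.

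Where you diverge is in part (2), and the divergence is a misdiagnosis of where the endpoints $0.1079$ and $0.0600$ come from rather than a gap that breaks the argument. You propose that these numbers are thresholds at which a competing mode displaces $k_2=0$ in $\mathbf{(N_3)}$ or at which some $\tau_k(\varepsilon)$ drops below $\tau_0$, and you flag this as the ``main obstacle.'' But your own check that $(u_*^2-2u_*v_*)^2(\varepsilon^2+1)<1$ and $K^*=K^0(\varepsilon)<1$ --- which indeed holds for every $\varepsilon$ in play here --- already shows that $k=0$ is the \emph{only} mode admitting a purely imaginary root, and $\tau_0$ does not depend on $\varepsilon$ or $d$. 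Hence $\mathbf{(N_3)}$ cannot be displaced, there is no transcendental tracking to perform, and Theorem \ref{thm:2.12}(2) together with Theorem \ref{T23}(2) already gives asymptotic stability for \emph{all} $\varepsilon>\varepsilon_*=0.1056$ with $\tau\in[0,\tau_0)$, and instability for \emph{all} $0<\varepsilon<0.1056$. The stated intervals are merely conservative sub-intervals of these half-lines; the paper never derives the endpoints, and they do not coincide with the neighbouring Turing thresholds $\varepsilon_*(2,0.05)\approx 0.0987$ or $\varepsilon_*(4,0.05)\approx 0.0757$, so your proposed ``monotone tracking'' would not reproduce them in any case. The conclusion therefore follows a fortiori and the obstacle you identify is illusory; the only real criticism is that, had the endpoints actually required the mechanism you describe, your proposal defers that computation rather than carrying it out.
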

Similarly, in corresponding  planar system (\ref{eq473-3}), the coefficients are $\varepsilon_1(\alpha)=-0.07723 \alpha_1$, $\varepsilon_2(\alpha)=0.00018873\alpha_1+0.8787\alpha_2$, $b_0=0.6476,~c_0=1.1737,~d_0=1,~d_0-b_0c_0=0.2399$, $\mathrm{sign}(\mathrm{Re}b_{223})=-1$. The Case $\mathrm{Ia}$ in Table 1 occurs again.
In Figure \ref{fig1}(a), critical bifurcation lines are, respectively,
$$
\begin{array}{rlc}
L_1:&\tau=\tau_*,~\varepsilon>\varepsilon_*,\\
L_2:&\varepsilon=\varepsilon_*+0.00020177(\tau-\tau_*),~\tau>\tau_*,\\
L_3:&\varepsilon=\varepsilon_*-0.1154(\tau-\tau_*),~\tau>\tau_*,\\
L_4:&\varepsilon=\varepsilon_*-0.1231(\tau-\tau_*),~\tau>\tau_*,\\
L_5:&\tau=\tau_*,~\varepsilon<\varepsilon_*,\\
L_6:&\varepsilon=\varepsilon_*+0.00020177(\tau-\tau_*),~\tau<\tau_*.
 \end{array}
   $$

   \begin{enumerate}
   \item[(iii)]Parameters $(\tau,\ \varepsilon)=(\tau_*,\ \varepsilon_*)+(0.05,\ -0.0063)\in
D_4$. Figure \ref{fig7} shows that two stable spatially inhomogeneous periodic orbits coexist in \eqref{3.1b}. The initial value functions are $(\varphi(x,t),\psi(x,t))=(1-0.1\cos (\pi x),
1-0.1\cos (\pi x))$ in (a),(b) and $(\varphi(x,t),\psi(x,t))=(1+0.1\cos (\pi x),
1+0.1\cos (\pi x))$ in (c), (d), respectively, $(x,t)\in[-0.2671,0]\times[0,1]$,
the simulated time is from $2630$ to $2671$.

\begin{figure}[htbp]
	\centering
	\subfigure[$u(t,x)$]{
		\begin{minipage}{0.48\linewidth}
			\centering
            \includegraphics[scale=0.42]{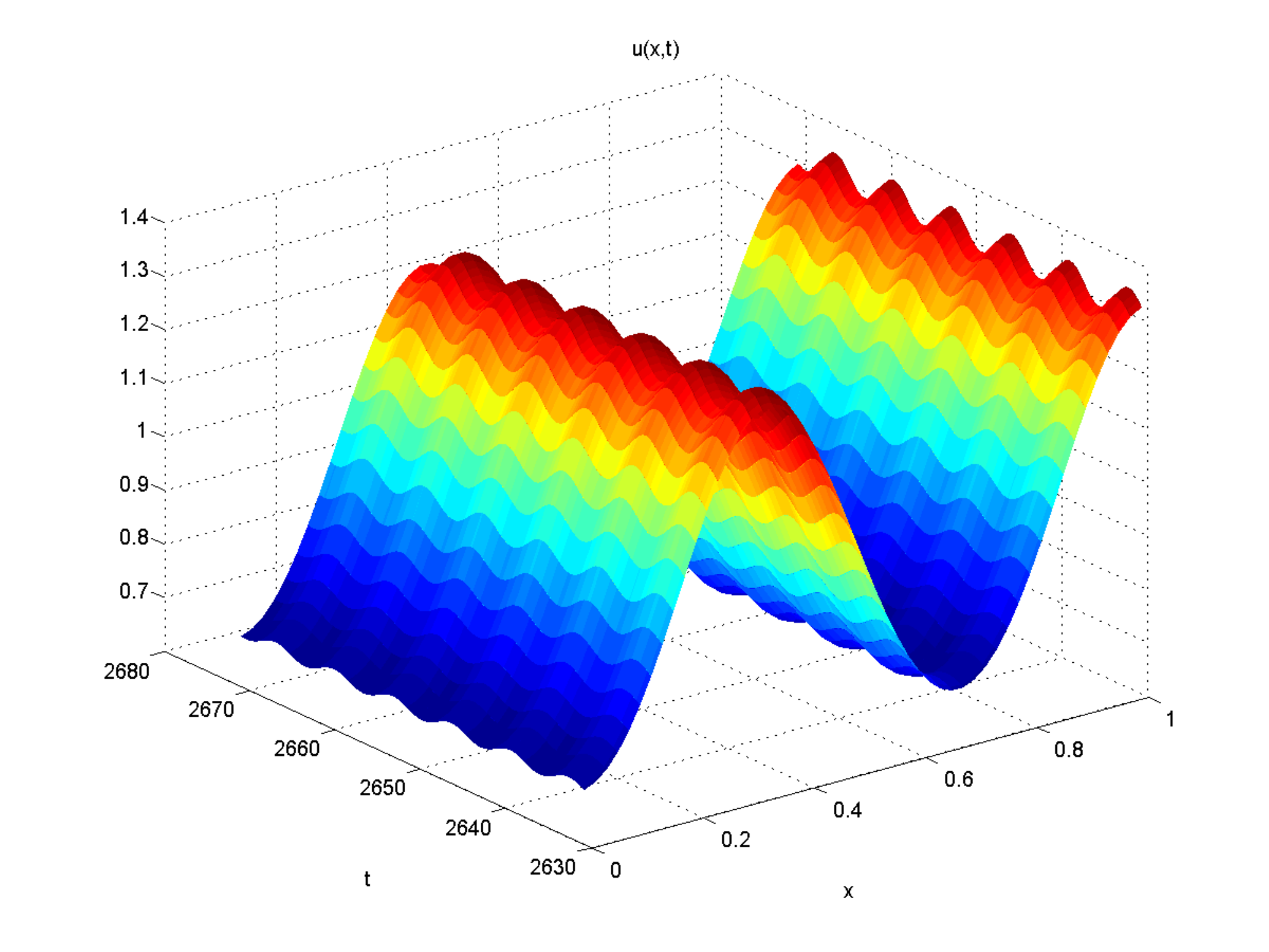}
	\end{minipage}}
	\subfigure[$v(t,x)$]{
		\begin{minipage}{0.48\linewidth}
			\centering
             \includegraphics[scale=0.42]{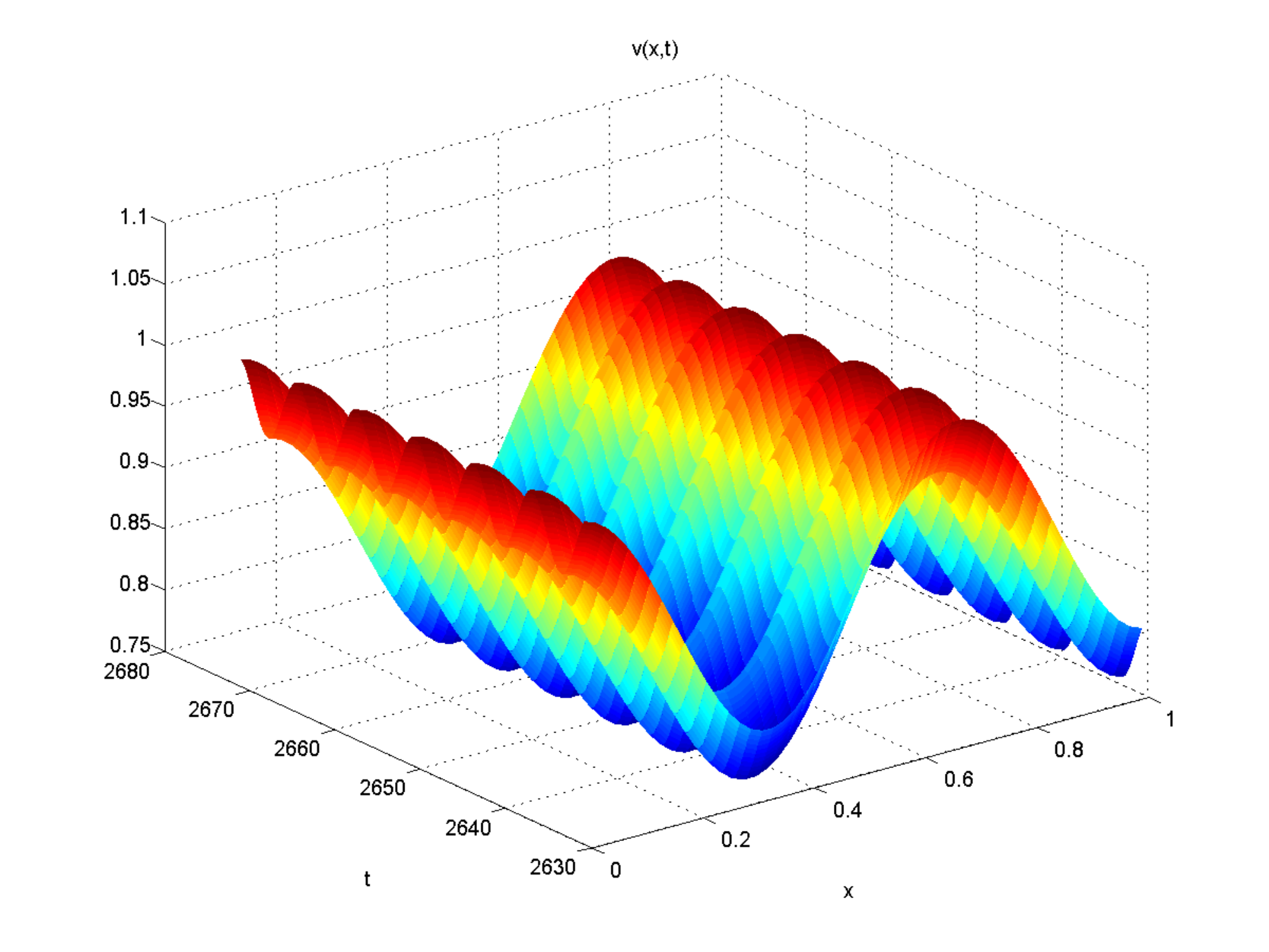}
		\end{minipage}
	}

	\subfigure[$u(t,x)$]{
		\begin{minipage}{0.48\linewidth}
			\centering
			\includegraphics[scale=0.42]{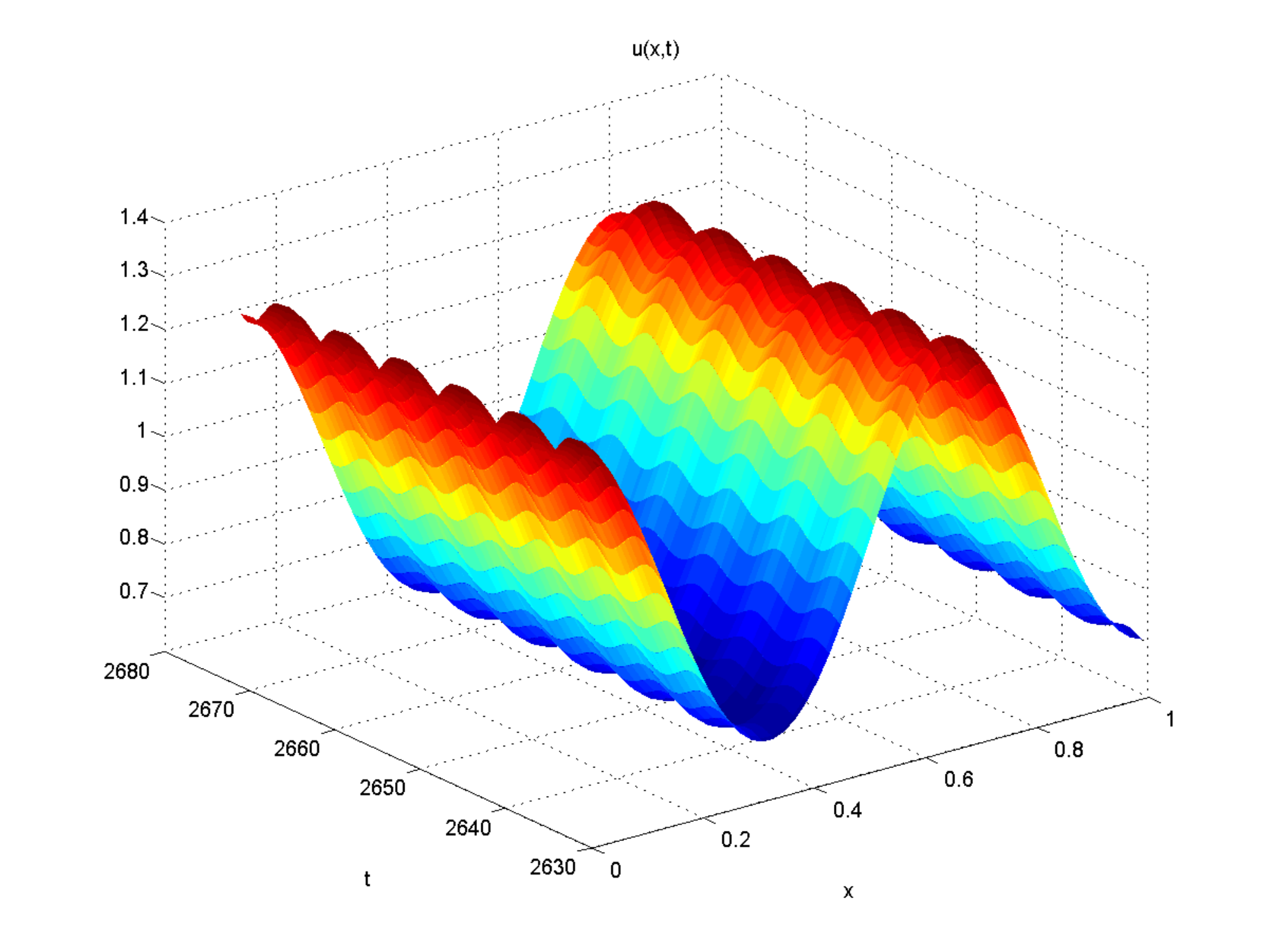}
	\end{minipage}}
	\subfigure[$v(t,x)$]{
		\begin{minipage}{0.48\linewidth}
			\centering
			\includegraphics[scale=0.42]{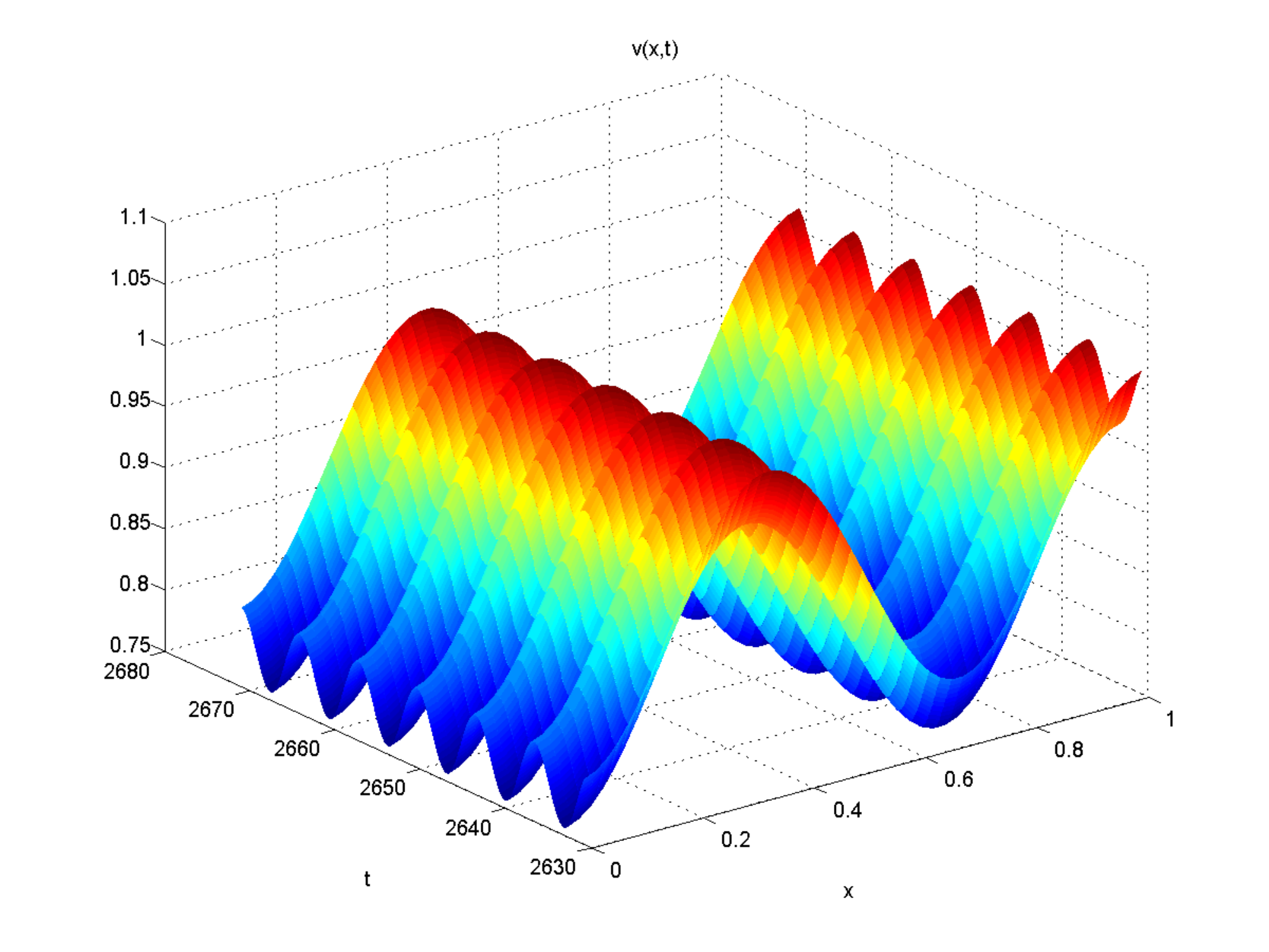}
		\end{minipage}
	}
	\caption{Two stable spatially inhomogeneous periodic orbits coexist for \eqref{3.1b} when  $(\tau,\ \varepsilon)\in D_4$.} \label{fig7}
\end{figure}

\item[(iv)]Parameters $(\tau,\ \varepsilon)=(\tau_*,\ \varepsilon_*)+(0.05,\ -0.03)\in
D_5$. Figure \ref{fig8} shows that two stable spatially inhomogeneous steady state solutions coexist for \eqref{3.1b}. The initial value functions are $(\varphi(x,t),\psi(x,t))=(0.9,
1.1)$ in (a),(b) and $(\varphi(x,t),\psi(x,t))=(1+0.1\cos (\pi x),
1+0.1\cos (\pi x))$ in (c), (d), respectively, $(x,t)\in[-0.2671,0]\times[0,1]$.
The simulated time is from $3966$ to $4006$ in (a),(b),(c),(d).
%
%For $(\tau,\ \varepsilon)\in
%D_6$,  similar to Figure \ref{fig8}, two stable spatially inhomogeneous steady state solutions can be also simulated, which are consistent with the assertions (6) of Theorem 4.1.
\begin{figure}[htbp]
	\centering
	\subfigure[$u(t,x)$]{
		\begin{minipage}{0.48\linewidth}
			\centering
            \includegraphics[scale=0.42]{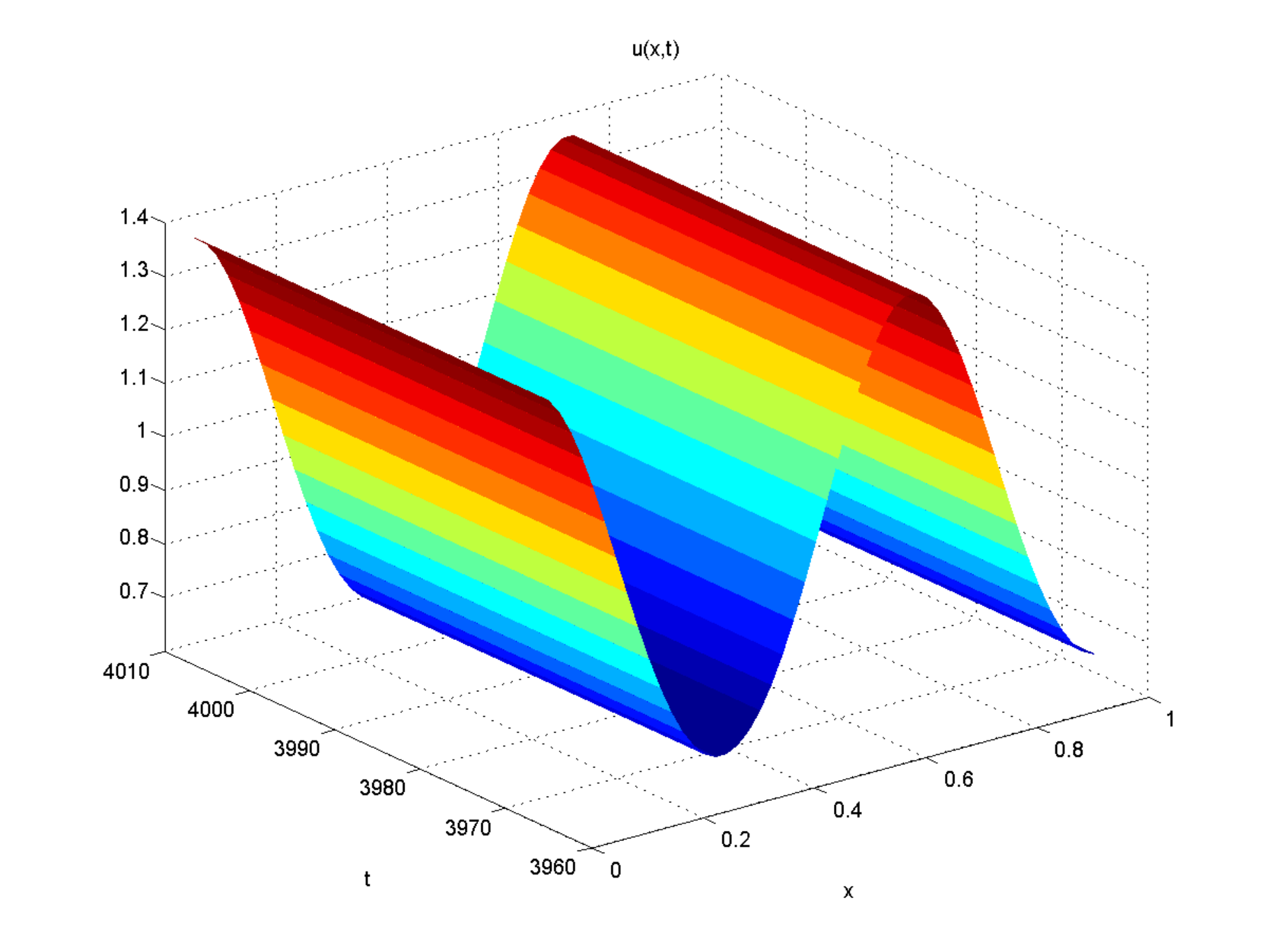}
	\end{minipage}}
	\subfigure[$v(t,x)$]{
		\begin{minipage}{0.48\linewidth}
			\centering
             \includegraphics[scale=0.42]{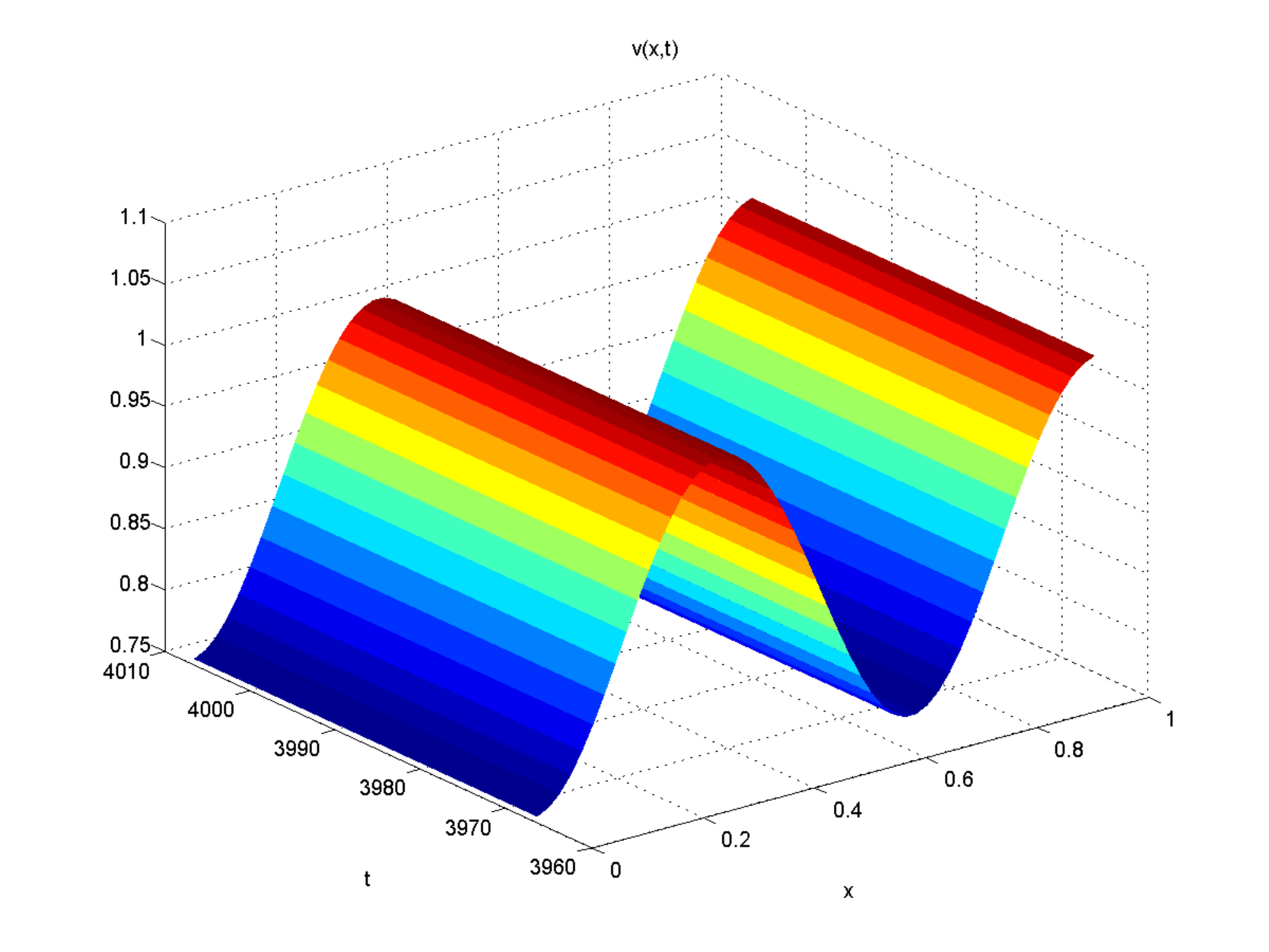}
		\end{minipage}
	}

	\subfigure[$u(t,x)$]{
		\begin{minipage}{0.48\linewidth}
			\centering
			\includegraphics[scale=0.42]{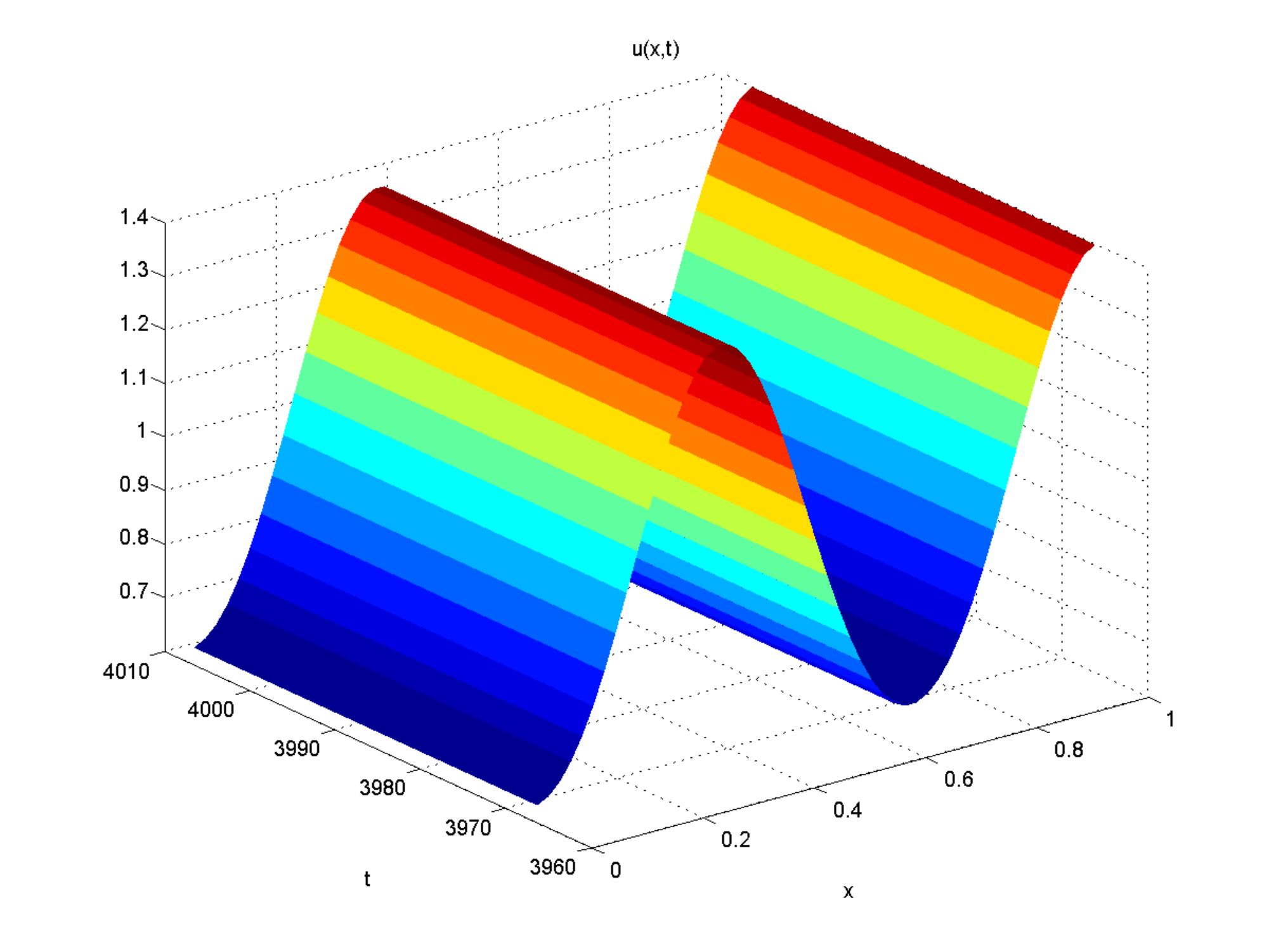}
	\end{minipage}}
	\subfigure[$v(t,x)$]{
		\begin{minipage}{0.48\linewidth}
			\centering
			\includegraphics[scale=0.42]{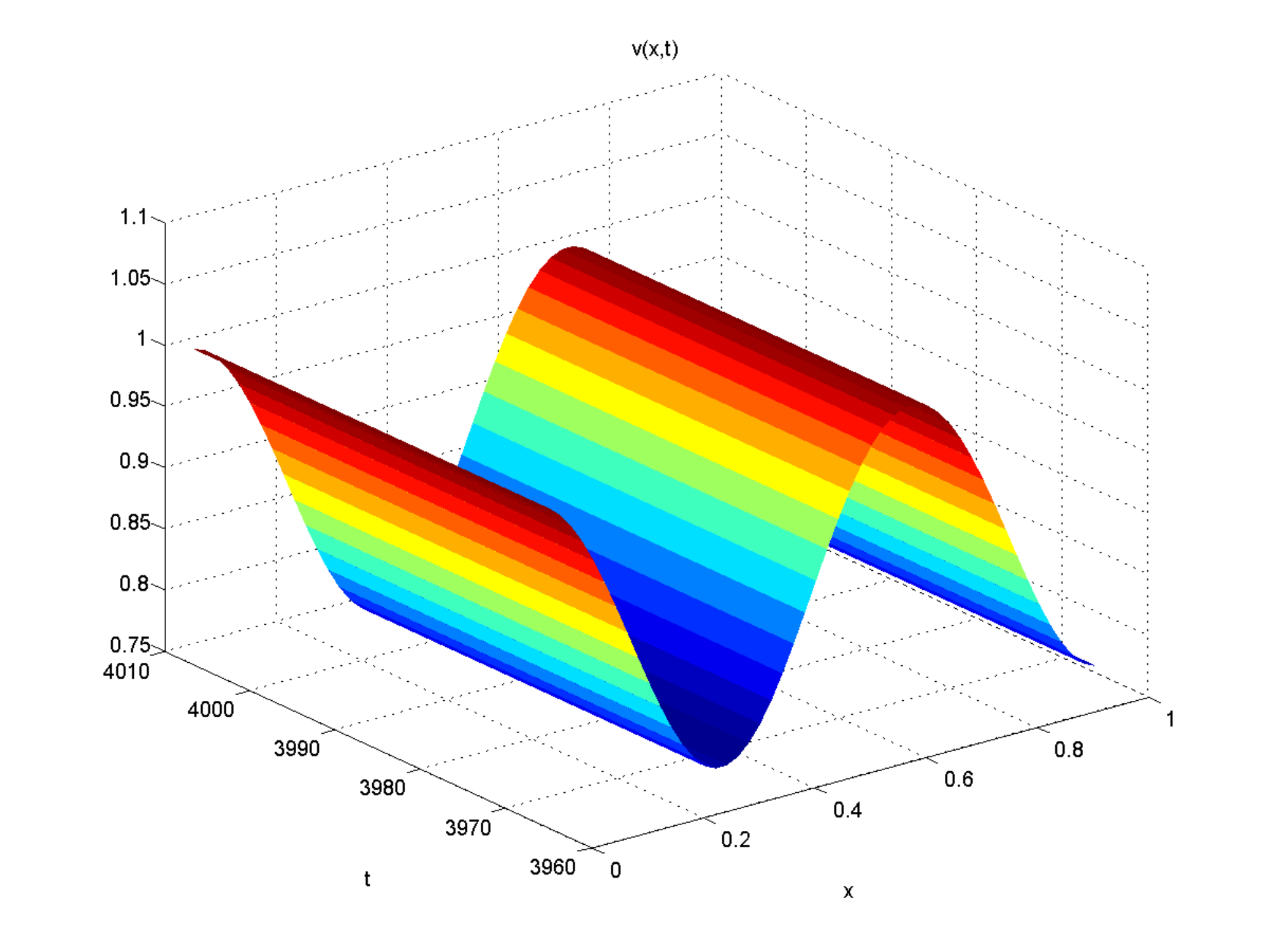}
		\end{minipage}
	}
	\caption{Two stable spatially inhomogeneous steady state solutions coexist for \eqref{3.1b} when  $(\tau,\ \varepsilon)\in D_5$.} \label{fig8}
\end{figure}

\end{enumerate}
\end{example}

\section{Conclusion}

Turing instability and Turing-Hopf bifurcation for a delayed reaction-diffusion Schnakenberg system are investigated, applying characteristic equation analysis, center manifold theorem and normal form method.

Firstly, on the basis of Yi\cite{YGLM}, a more larger range, where Turing instability doesn't occur, that is, the coexistence equilibrium is stable, has been provided, which is described as two sufficient conditions and one sufficient and necessary condition. They are independent of diffusion, dependent on diffusion but not dependent on wave numbers, and dependent on diffusion and wave numbers, respectively. In other words, we establish weaker conditions that guarantee the Turing  instability, of which two are necessary while one is sufficient and necessary.

Then, an explicit expression for the first Turing bifurcation curve has been obtained, on which the corresponding characteristic equation without delay has no root with positive real part. It is a piecewise smooth and continuous curve, of which piecewise points are exactly Turing-Turing bifurcation points. The expression explicitly depends on wave number $k$ and diffusion coefficient $d$, hence it is easy to find spatial pattern with arbitrary wave number. Based on this, the fact that spatially inhomogeneous steady state and spatially inhomogeneous periodic patterns with different spatial frequencies can be achieved via changing the diffusion rates, has been proven theoretically and shown numerically.

Furthermore, the joint effects of diffusion and delay ensure that Turing-Hopf bifurcation takes place. Normal forms truncated to order 3 restricted on center manifolds has been established, by utilizing the generic formulas (see \cite{JAS}), and all coefficients of normal forms are expressed explicitly, using the original system parameters $a,b,d,\varepsilon$ and the delay $\tau$. Bifurcation set on $(\tau,\varepsilon)$ parameters plane has been obtained. By discussing phase portraits, we have revealed that diffusion drives Turing bifurcation and leads to a pair of stable spatially inhomogeneous steady state solutions, while delay drives  Hopf bifurcation and leads to a stable periodic solutions. In addition, the joint effects of diffusion and delay can destabilize above solutions and generate a pair of stable spatially inhomogeneous periodic solutions.

Our results have indicated that when diffusion ratio $\varepsilon$ is relatively constant, diffusion coefficient $d$ of activator has great influence on the wave number $k$ (or the wave frequency) of spatial pattern. Smaller the diffusion coefficient $d$ is, larger the wave number $k$ is. Is the conclusion also suitable for other diffusion models?

Specifically, the phenomena observed by \cite{Gaffney2006Gene} that time delay can induce a failure of the Turing instability, have been theoretically explained, and the assertion drawn by Yi, Gaffney and Seirin-Lee (2017) that the modelling of gene expression time delays in Turing systems can eliminate or disrupt the formation of a stationary heterogeneous pattern in Schnakenberg system, has been further verified. These studies demonstrate that Turing-Hopf bifurcation is able to reveal the occurrence and development of some mixed spatiotemporal patterns, which may include both nonuniform spatially and temporally periodic patterns, and to further explain some complex biological phenomena. The research method in this paper can also be applied to any other delayed reaction-diffusion equation with similar degenerate critical points.

\section*{Acknowledgments}
The work was supported in part by the National Natural Science
Foundation of China (No.11371112).

%&&&&&&&&&&&&&&&&&&&&&&&&&&&
%&&&&&&&&&&&&&&
%&&&&&&&&&&&&&&&&&&&&&&&&&&&

%{\bf Appendix. Derivation of third-order normal form}

\bibliographystyle{plain}
\bibliography{mybibfile}

\end{document}